\numberwithin{equation}{section}
\newtheorem{thmintro}{}
\newtheorem{theoremintro}[thmintro]{Theorem}
\newtheorem{thm}{}[section]
\newtheorem{theorem}[thm]{Theorem}
\newtheorem{corollary}[thm]{Corollary}
\newtheorem{lemma}[thm]{Lemma}
\newtheorem{proposition}[thm]{Proposition}
\theoremstyle{definition}
\newtheorem{definition}[thm]{Definition}
\newtheorem{question}[thm]{Question}
\newcommand{\abs}[1]{\left\lvert#1\right\rvert}
\newcommand{\norm}[1]{\left\lVert#1\right\rVert}
\newcommand{\enbrace}[1]{\left\lbrace#1\right\rbrace}
\newcommand{\enpar}[1]{\left(#1\right)}
\DeclareMathOperator{\supp}{supp}
\DeclareMathOperator*{\Ave}{Ave}
\newcommand{\Id}{\ensuremath{\mathrm{Id}}}
\newcommand{\wstartop}{\ensuremath{w^*\mbox{--}}}
\newcommand{\VV}{\ensuremath{\mathbb{V}}}
\newcommand{\YY}{\ensuremath{\mathbb{Y}}}
\newcommand{\XX}{\ensuremath{\mathbb{X}}}
\newcommand{\WW}{\ensuremath{\mathbb{W}}}
\newcommand{\NN}{\ensuremath{\mathbb{N}}}
\newcommand{\RR}{\ensuremath{\mathbb{R}}}
\newcommand{\XB}{\ensuremath{\mathcal{X}}}
\newcommand{\YB}{\ensuremath{\mathcal{Y}}}
\newcommand{\FF}{\ensuremath{\mathbb{F}}}
\newcommand{\Pt}{\ensuremath{\mathcal{P}}}
\newcommand{\Ut}{\ensuremath{\mathcal{U}}}
\newcommand{\Cont}{\ensuremath{\mathcal{C}}}
\newcommand{\It}{\ensuremath{\mathcal{I}}}
\newcommand{\Simp}{\ensuremath{\mathcal{S}}}
\newcommand{\Jt}{\ensuremath{\mathcal{J}}}
\newcommand{\SL}{\ensuremath{\mathcal{L}}}
\newcommand{\xx}{\ensuremath{\bm{x}}}
\newcommand{\yy}{\ensuremath{\bm{y}}}
\newcommand{\ee}{\ensuremath{\bm {e}}}
\newcommand{\EB}{\ensuremath{\mathcal{E}}}
\newcommand{\bphi}{\ensuremath{\bm{\phi}}}
\newcommand{\bpsi}{\ensuremath{\bm{\psi}}}
\newcommand{\LL}{\ensuremath{\bm{L}}}
\newcommand{\UU}{\ensuremath{\bm{U}}}
\newcommand{\Ct}{\ensuremath{\mathcal{C}}}
\newcommand{\Rt}{\ensuremath{\mathcal{R}}}
\newcommand{\Mt}{\ensuremath{\mathcal{M}}}
\newcommand{\Nt}{\ensuremath{\mathcal{N}}}
\newcommand{\Anso}{}
\author[J. L. Ansorena]{Jos\'e L. Ansorena}\address{Department of Mathematics and Computer Sciences\\
Universidad de La Rioja\\
Logro\~no 26004\\ Spain}
\email{joseluis.ansorena@unirioja.es}
\author[G. Bello]{Glenier Bello}
\address{Departamento de Matem\'{a}ticas e
Instituto Universitario de Matem\'{a}ticas y Aplicaciones\\
Universidad de Zaragoza\\
50009 Zaragoza\\
Spain}
\email{gbello@unizar.es}
\subjclass[2020]{46B03, 46B07, 46B10, 46B15, 46B20, 46B25, 46B42, 46B08, 46E30, 46E40}
\keywords{Lebesgue spaces, vector-valued K\"othe spaces, isomorphic classification}
\begin{document}
%--------------------------------------------
\title{Mutually non isomorphic mixed-norm Lebesgue spaces}
%--------------------------------------------
\begin{abstract}
We prove that for $1\le p,q\le\infty$ the mixed-norm spaces $L_q(L_p)$ are mutually non-isomorphic, with the only exception that $L_q(L_2)$ is isomorphic to $L_q(L_q)$ for all $1<q<\infty$.
\end{abstract}
%--------------------------------------------
\thanks{Both authors acknowledge the support of the Spanish Ministry for Science and Innovation under Grant PID2022-138342NB-I00 for \emph{Functional Analysis Techniques in Approximation Theory and Applications (TAFPAA)}. G. Bello has also been partially supported by PID2022-137294NB-I00, DGI-FEDER and by Project E48\_23R, D.G. Arag\'{o}n}
%--------------------------------------------
\maketitle
%--------------------------------------------
\section{Introduction}\label{sect:intro}\noindent
%--------------------------------------------
From a functional analytic point of view, determining whether two given Banach spaces are isomorphic is a primer question already present in Banach's book \cite{Banach1932}. In this setting, mixed-norm spaces came to the forefront in 1973 when Triebel \cite{Triebel1973} proved that, given $1<p$, $q<\infty$, the Besov space $B_{p,q}^\alpha(\RR^d)$ is isomorphic to $\ell_q(\ell_p)$ for any degree of smoothness $\alpha>0$ and any dimension $d\in\NN$. Later on, Lemari\'{e} and Meyer \cite{LemarieMeyer1986} extended the isomorphism to the whole range $p$, $q\in[1,\infty]$. Therefore, the isomorphic classification of Besov spaces $B_{p,q}^\alpha(\RR^d)$ relies on the corresponding classification of matrix spaces $\ell_q(\ell_p)$, $p$, $q\in[1,\infty]$.

Triebel \cite{Triebel1978} said that he learned from Pe{\l}czy\'{n}ski that, for $1<p<\infty$ and $1\le q\le\infty$, different $\ell_q(\ell_p)$-spaces were not isomorphic, and Peetre included this result in his handbook on Besov spaces \cite{Peetre1976}. However, probably because of the unexpected subtlety of the arguments required to complete a proof of Pe{\l}czy\'{n}ski's claim, such a proof was not released until 2011. That year, Cembranos and Mendoza \cite{CembranosMendoza2011} wrote down a proof of Pe{\l}czy\'{n}ski's claim and improved it by including the extreme values of $p$. More precisely, setting $Z_{p,q}=\ell_q(\ell_p)$ as in \cite{BCLT1985}, they proved the following statement.

\begin{theoremintro}[\cite{CembranosMendoza2011}]\label{thm:CM}
Let $1\le p,q,r,s\le\infty$. The spaces $Z_{p,q}$ and $Z_{s,r}$ are isomorphic if and only if $(p,q)=(r,s)$.
\end{theoremintro}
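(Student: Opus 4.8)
One implication is immediate, since equal parameters produce isometrically identical spaces; the whole difficulty lies in the converse, that distinct pairs yield non-isomorphic spaces. The natural strategy is to separate the four--parameter family $\{Z_{p,q}\}$ by a hierarchy of isomorphic invariants, passing from coarse invariants that isolate the interior range from the boundary to a single fine invariant that detects the \emph{order} of the two indices. First I would record the coarse invariants. The space $Z_{p,q}=\ell_q(\ell_p)$ is reflexive precisely when $1<p,q<\infty$ and separable precisely when $p,q<\infty$. Since reflexive spaces contain no copy of $c_0$ or $\ell_1$, reflexivity already isolates the interior range from every boundary space, while separability and the presence of (complemented) copies of $c_0$ and $\ell_1$ further split the boundary family according to which of the indices equal $1$ or $\infty$.

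Second, within the reflexive range $1<p,q<\infty$ I would pin down the \emph{unordered} pair $\{p,q\}$. The cleanest tool is the determination of which $\ell_r$ embed as complemented subspaces, the answer being exactly $r\in\{p,q\}$; this can be corroborated by Rademacher type and cotype, which for $\ell_q(\ell_p)$ equal $\min\{p,q,2\}$ and $\max\{p,q,2\}$. All of these invariants recover $\{p,q\}$, but they are symmetric in the two indices, so by themselves they cannot distinguish $\ell_q(\ell_p)$ from $\ell_p(\ell_q)$.

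The crux, then, is to break this symmetry: for $p\ne q$ one must show $\ell_q(\ell_p)\not\cong\ell_p(\ell_q)$. The plan is to produce an \emph{asymmetric} complemented--subspace invariant, namely to prove that $\ell_p(\ell_q)$ is not isomorphic to any complemented subspace of $\ell_q(\ell_p)$ when $p\ne q$; since an isomorphism would in particular realize each space as a complemented copy of the other, this would rule out $\ell_q(\ell_p)\cong\ell_p(\ell_q)$. Establishing this non--complementation, using the complemented--subspace technology for mixed--norm spaces in the spirit of \cite{BCLT1985}, is the heart of the matter: it requires analyzing how an arbitrary bounded projection on $\ell_q(\ell_p)$ must interact with the two--level block structure, and showing that averaging over the ``wrong'' level forces an estimate incompatible with reproducing the reversed mixed norm. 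This is precisely the delicate point alluded to in the introduction, and I expect it to be the main obstacle.

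Finally, the boundary cases $p\in\{1,\infty\}$ or $q\in\{1,\infty\}$ must be treated by analogous asymmetric arguments. Within each group singled out by the coarse invariants one is again reduced to distinguishing the two orders, for instance $\ell_\infty(\ell_1)$ from $\ell_1(\ell_\infty)$, and here reflexivity and type/cotype are unavailable. For these I would rely on the presence or absence of complemented copies of $c_0$ and $\ell_1$ and on structural properties such as the Dunford--Pettis and Grothendieck properties, which respond asymmetrically to the order of the factors. These extreme configurations, rather than the reflexive interior, are where the subtlety that delayed a complete proof of Pe{\l}czy\'{n}ski's claim genuinely resides.
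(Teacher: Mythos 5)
There is a genuine gap, and it sits exactly where you place it. Your outline is sound up to the point where the real work begins: the easy direction, the symmetric invariants (reflexivity, separability, type and cotype, the set of complemented $\ell_r$'s) do recover the unordered pair $\{p,q\}$, but the decisive step --- proving that $\ell_q(\ell_p)\not\simeq\ell_p(\ell_q)$ when $p\ne q$, or your stronger claim that $\ell_p(\ell_q)$ is not complemented in $\ell_q(\ell_p)$ --- is never argued. You announce it as ``the heart of the matter'' and ``the main obstacle,'' with only a heuristic about how a projection ought to interact with the two-level block structure. Since this is precisely the subtlety that kept Pe{\l}czy\'{n}ski's claim unproved for decades, deferring it means the proposal is a strategy, not a proof. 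The same applies to the extreme cases: invoking Dunford--Pettis or Grothendieck-type properties that ``respond asymmetrically to the order of the factors'' is a hope rather than an argument; nothing is shown, for instance, to separate $\ell_1(\ell_\infty)$ from $\ell_\infty(\ell_1)$, where both spaces are nonseparable and nonreflexive. Note also that even your ``cleanest tool'' --- that the complemented $\ell_r$'s in $\ell_q(\ell_p)$ are exactly $r\in\{p,q\}$ --- cannot simply be cited in a self-contained proof: establishing it is of comparable difficulty to the theorem itself (the paper proves the function-space analogue, Corollary~\ref{cor:lpcomplemented}, only after all of Section~\ref{sec:unconditional}).

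For contrast, the paper never analyzes projections on $\ell_q(\ell_p)$ directly. It transfers the hypothesis $Z_{p,q}\simeq Z_{r,s}$ to the finite-dimensional relation $(p,q)\preceq(r,s)$ (Proposition~\ref{prop:motivation}), upgrades that relation, when the target indices are finite, to a complemented embedding into the \emph{function} space $L_s(L_r)$ via ultraproducts and the L\'evy--Raynaud theorem, and then exploits function-space tools unavailable in the purely atomic setting: disjointification of complemented unconditional basic sequences (Lemmas~\ref{lem:Ulq} and~\ref{lem:UlqDisjoint}), convexification, and duality, culminating in Theorem~\ref{thm:MixedNormLsLrComp} and Theorem~\ref{thm:MainFD}. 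The order-distinguishing step you left open is thereby reduced to a single residual case, $(1,2)$ versus $(2,1)$, which is settled not by a direct projection analysis but by quoting the theorem of Bourgain--Casazza--Lindenstrauss--Tzafriri that $\ell_1(\ell_2)$, $\ell_1(c_0)$ and $c_0(\ell_1)$ have a hereditarily unique unconditional basis (Theorem~\ref{thm:BCLZ}, combined with Lemma~\ref{lem:TABases}); finally, the exceptional pairs $\{2,q\}$ allowed by Theorem~\ref{thm:MainFD} are excluded for sequence spaces because $\ell_q$ is $\ell_q$-saturated, so $\ell_q(\ell_2)\not\simeq\ell_q$ unless $q=2$. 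If you want to complete your approach along its original lines, you would either have to reprove a result of this strength for the mixed-norm block structure or, as the paper does, route the asymmetry through an already-established uniqueness-of-unconditional-basis theorem.
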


We refer the reader to \cite{AlbiacAnsorena2015} for an extension of Theorem~\ref{thm:CM} to the quasi-Banach setting, that is, to the case when $0<p,q\le\infty$, and to \cite{AlbiacAnsorena2016b} for a detailed exposition of this topic.

As Besov spaces over compact spaces are concerned, it is known that
\[
B_{p,q}^\alpha\enpar{[0,1]^d)} \simeq B_{p,q}:=\enpar{\oplus_{n=1}^\infty \ell_p^n}_{\ell_q}
\]
for any $p$, $q\in(0,\infty]$, any degree of smoothness $\alpha>0$ and any dimension $d\in\NN$ (see \cite{DeVorePopov1988}*{Corollary 5.3} and \cite{AlbiacAnsorena2017}*{Theorem 4.3}). The isomorphic classification of mixed-norm spaces $B_{p,q}$ was established in \cite{AlbiacAnsorena2017} for the whole range $0<p\le\infty$ and $0< q\le\infty$. Focussing on the locally convex setting, we have the following.

\begin{theoremintro}[\cite{AlbiacAnsorena2017}*{Proposition 2.11}]\label{thm:AA}
Let $1\le p,q,r,s\le\infty$. The spaces $B_{p,q}$ and $B_{s,r}$ are isomorphic if and only if $(p,q)=(r,s)$ or $1<q=s<\infty$ and $\{p,r\}=\{2,q\}$.
\end{theoremintro}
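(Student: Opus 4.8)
The only non-trivial assertion is that $B_{2,q}\simeq B_{q,q}$ for $1<q<\infty$. Since $B_{q,q}=\enpar{\oplus_n\ell_q^n}_{\ell_q}$ is isometric to $\ell_q$, it suffices to show $\enpar{\oplus_n\ell_2^n}_{\ell_q}\simeq\ell_q$, and I would do this by the Pe\l czy\'{n}ski decomposition method. One inclusion is trivial: selecting one coordinate from each block exhibits $\ell_q$ as a $1$-complemented subspace of $B_{2,q}$. For the other, I need that $\ell_2^n$ embeds into $\ell_q$ uniformly complementably; for $q\ge 2$ this follows by realizing $\ell_2^n$ as the span of $n$ independent Gaussians in $L_q$ (a subspace that is complemented there for $1<q<\infty$) and approximating by finitely supported functions, while the range $1<q\le 2$ is obtained by duality. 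Placing these copies on disjoint blocks of coordinates shows $B_{2,q}$ is isomorphic to a complemented subspace of $\ell_q$. As both $\ell_q$ and $B_{2,q}$ are isomorphic to their own $\ell_q$-sums, the decomposition principle yields $B_{2,q}\simeq\ell_q$.

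\textbf{Necessity.} Assume $B_{p,q}\simeq B_{s,r}$. First I would dispose of the extreme values: reflexivity (which holds exactly when $1<p,q<\infty$), the presence or absence of copies of $c_0$ and $\ell_1$, and the duality $B_{p,q}^*\simeq B_{p',q'}$ together separate all cases in which some index lies in $\{1,\infty\}$. This leaves the heart of the matter, $1<p,q,r,s<\infty$, which I attack in two stages. The first stage recovers the \emph{outer} index: I claim that every spreading model generated by a normalized weakly null sequence in $B_{p,q}$ is isometric to $\ell_q$. Indeed, the blocks $\ell_p^n$ are finite dimensional, so a weakly null sequence cannot concentrate on finitely many of them; after passing to a subsequence its terms have essentially disjoint block supports, and vectors living in different blocks combine exactly in the $\ell_q$-norm, independently of their internal structure. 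Since the family of spreading models is an isomorphic invariant, $q=s$.

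The second stage recovers the \emph{inner} index up to the stated ambiguity. Knowing $q=s$, I first compute the type and cotype of $B_{p,q}$, namely $\min\{p,q,2\}$ and $\max\{p,q,2\}$; these already force $p=r$ whenever $p$ lies outside the closed interval between $2$ and $q$. The delicate case is $p$ \emph{inside} that interval, where type and cotype are insensitive to $p$. Here I would pass to the finer isomorphic invariant
\[
\Lambda(X)=\enbrace{t\in(1,\infty):\ \ell_t^n\text{ embeds into }X\text{ uniformly complementably}},
\]
and prove $\Lambda(B_{p,q})=\{p,q,2\}$. The inclusion $\supseteq$ is clear, since the blocks contribute $p$, the cross-block vectors contribute $q$, and the Gaussian copies used above contribute $2$. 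Comparing $\Lambda(B_{p,q})=\{p,q,2\}$ with $\Lambda(B_{r,q})=\{r,q,2\}$ then forces either $p=r$, giving $(p,q)=(r,s)$, or $\{p,r\}=\{2,q\}$ with $1<q<\infty$, which is precisely the exceptional conclusion.

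\textbf{Main obstacle.} The crux is the reverse inclusion $\Lambda(B_{p,q})\subseteq\{p,q,2\}$, that is, that no $\ell_t^n$ with $t\notin\{p,q,2\}$ sits uniformly complementably in $B_{p,q}$. I would reduce this to a local statement by projecting onto finitely many blocks, so that the ambient space becomes a finite mixed-norm space $\ell_q^a\enpar{\ell_p^b}$, and then appeal to the local structure of $\ell_p$: a uniformly complemented $\ell_t^m$ inside $\ell_p$ forces $t\in\{p,2\}$, the Hilbertian value $2$ being the only extra possibility compatible with the richness of $\ell_p$. This local complementation dichotomy, and not type/cotype, is what ultimately separates the spaces $B_{p,q}$ as $p$ ranges over the open interval between $2$ and $q$, and I expect it to be the most demanding ingredient of the whole argument.
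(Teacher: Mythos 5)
Your sufficiency argument is correct and is essentially the paper's own route: Pe{\l}czy\'{n}ski's uniform complementation of $\ell_2^n$ in $\ell_q$ (the paper's \eqref{eq:FDp2}) plus the decomposition method (the paper's \eqref{eq:Bpqlq}). Your spreading-model argument recovering the outer index in the reflexive range $1<p,q,r,s<\infty$ is also correct, and it is a genuinely different (and attractive) idea that does not appear in the paper. The necessity part, however, has two serious gaps. First, the extreme indices cannot be ``disposed of'' by reflexivity, copies of $c_0$ and $\ell_1$, and duality. Take $B_{p,1}$ and $B_{r,1}$ with $1<p\neq r<\infty$: both are non-reflexive $\ell_1$-sums of finite-dimensional spaces, hence weakly sequentially complete (so neither contains $c_0$), and both contain complemented copies of $\ell_1$; duality merely converts the question into distinguishing $B_{p',\infty}$ from $B_{r',\infty}$, which is the same problem again. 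Likewise the pair $B_{\infty,1}$ versus $B_{1,\infty}$ is untouched by these soft invariants: the paper has to invoke the uniqueness-of-unconditional-basis theorem of \cite{BCLT1985} (Theorem~\ref{thm:BCLZ}) together with Lemma~\ref{lem:TABases} precisely for this pair, and it needs the full complementation machinery (Proposition~\ref{prop:motivation} and Corollary~\ref{cor:lpcomplemented}) for the pairs with one index equal to $1$. A large part of the paper's work lives exactly in the cases you wave away.

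Second, the crux you correctly identify --- the inclusion $\Lambda(B_{p,q})\subseteq\{p,q,2\}$ --- is not addressed by your proposed reduction. After projecting onto finitely many blocks the ambient space is $\ell_q^a(\ell_p^b)$, not $\ell_p$, and a uniformly complemented $\ell_t^m$ there need not lie, even approximately, inside a single $\ell_p$-block: for instance $\ell_q^a$ and the Euclidean copies $\ell_2^m$ sit complementably in $\ell_q^a(\ell_p^b)$ transversally to the blocks. So the known dichotomy ``uniformly complemented $\ell_t^m$ in $\ell_p$ forces $t\in\{p,2\}$'' says nothing here; determining which $\ell_t^m$ are uniformly complemented in finite mixed-norm spaces \emph{is} the hard problem, not a tool one can appeal to. In the paper this is Corollary~\ref{cor:lpcomplemented}, reached through Proposition~\ref{prop:motivation} (an ultraproduct argument combined with the L\'evy--Raynaud theorem, passing from uniformly complemented finite-dimensional copies to $\ell_t\trianglelefteq L_q(L_p)$) and through all of Section~\ref{sec:unconditional}: Kadec--Pe{\l}czy\'{n}ski-type disjointification of complemented copies (Lemmas~\ref{lem:Ulq} and~\ref{lem:UlqDisjoint}), convexification, and duality. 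It is worth noting that the analogue of your set $\Lambda$ for $q=1$ and $p>2$ is not even fully determined --- it is the paper's closing open question --- so any proof of your inclusion must genuinely exploit $1<q<\infty$. As it stands, your outline assumes the hardest step of the theorem.
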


The natural way to pursue this line of research is to consider Lebesgue function spaces, that is, to address the isomorphic classification of mixed-norm Lebesque spaces $L_q(L_p)$ for $1\le p,q\le\infty$. In this article, we complete this task in full by achieving the following result.

\begin{theoremintro}\label{thm:Main}
Let $1\le p,q,r,s\le\infty$. The spaces $L_q(L_p)$ and $L_s(L_r)$ are isomorphic if and only if either $(p,q)=(r,s)$ or $1<q=s<\infty$ and $\{p,r\}=\{2,q\}$.
\end{theoremintro}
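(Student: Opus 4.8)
The plan is to prove the two implications separately, treating the ``if'' part quickly and devoting the bulk of the work to the ``only if'' part. For the ``if'' direction the identity $L_q(L_q)\simeq L_q$ is just the measure isomorphism $[0,1]^2\cong[0,1]$, so the only substantive point is the Hilbertian exception $L_q(L_2)\simeq L_q$ for $1<q<\infty$, which I would obtain by the Pe\l czy\'nski decomposition method. Since $\ell_2\simeq L_2$ embeds as a norm-complemented subspace of $L_q$ for $1<q<\infty$ (for instance, via the closed span of a Gaussian sequence), applying such a projection fibrewise shows that $L_q(L_2)$ is complemented in $L_q(L_q)\simeq L_q$; conversely, fixing a norm-one vector in $L_2$ exhibits $L_q$ as a complemented subspace of $L_q(L_2)$. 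As $L_q\simeq\enpar{\oplus_n L_q}_{\ell_q}$, the decomposition method yields $L_q(L_2)\simeq L_q\simeq L_q(L_q)$.

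For the ``only if'' direction, assume $L_q(L_p)\simeq L_s(L_r)$. I would first dispose of the extreme indices $1$ and $\infty$ using coarse isomorphic invariants: $L_q(L_p)$ is reflexive precisely when $1<p,q<\infty$, it contains an isomorphic copy of $c_0$ exactly when $\max(p,q)=\infty$, and it contains a copy of $\ell_1$ exactly when $\min(p,q)=1$; combined with the Dunford--Pettis property and with the Rademacher type and cotype, which equal $\min(2,p,q)$ and $\max(2,p,q)$, these separate all pairs in which some index is $1$ or $\infty$. This reduces matters to $1<p,q,r,s<\infty$, where type and cotype already force $\{\min(2,p,q),\max(2,p,q)\}=\{\min(2,r,s),\max(2,r,s)\}$; in particular they recover the unordered pair $\{p,q\}$ when $p$ and $q$ lie on opposite sides of $2$, but they never detect the order of $p,q$ and are blind when $p,q$ lie on the same side of $2$.

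The heart of the matter is therefore an asymmetric invariant that recovers $(p,q)$ as an \emph{ordered} pair, modulo the Hilbertian collapse. The plan is to use mixed-norm sequence spaces as test objects: a direct computation with functions that are disjointly supported in the inner and in the outer variable shows that $\ell_q(\ell_p)$ sits isometrically and complementably inside $L_q(L_p)$, and likewise each finite block $\ell_q^n(\ell_p^m)$ embeds uniformly complementably. The key rigidity statement I would establish is the converse: if $\ell_b(\ell_a)$ embeds complementably in $L_s(L_r)$ with $1<r,s<\infty$, then $(a,b)=(r,s)$, except that when $r=2$ (so that $L_s(L_2)\simeq L_s$) the pair $(a,b)=(2,s)$ is also admissible. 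Granting this, the two-sided complementation coming from the isomorphism $L_q(L_p)\simeq L_s(L_r)$, together with Theorem~\ref{thm:CM}, forces $(p,q)=(r,s)$ or the stated exception.

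The main obstacle is precisely this rigidity theorem, the classification of the complemented mixed-norm sequence subspaces of $L_s(L_r)$. The difficulty is that $L_r$ carries, besides its own $\ell_r$-structure, a complemented Hilbertian $\ell_2$ and a wealth of \emph{non}-complemented $\ell_t$-subspaces, so one must rule out the ``wrong order'' copies $\ell_r(\ell_s)$ and the spurious copies $\ell_b(\ell_a)$ with $\{a,b\}\ne\{r,s\}$ while retaining exactly the Hilbertian exception. I expect this to demand the fine subspace theory of $L_r$ (in the spirit of the results on complemented subspaces of $L_r$ and on $\mathcal{L}_r$-spaces), used to analyse how a bounded projection onto $\ell_b(\ell_a)$ must interact with the two variables, together with a local, finite-dimensional comparison of the asymmetric blocks $\ell_q^n(\ell_p^m)$ and $\ell_p^m(\ell_q^n)$ through their type, cotype and summing constants; the Hilbertian exception should reappear exactly because finite-dimensional $\ell_2$-blocks sitting in an $\ell_s$-sum collapse, mirroring the exception already present in the Besov classification of Theorem~\ref{thm:AA}.
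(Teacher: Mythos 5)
Your ``if'' direction is fine and essentially the paper's argument (Pe\l czy\'nski decomposition via a complemented copy of $\ell_2$ in $L_q$). The ``only if'' direction, however, has two genuine gaps. First, the claimed reduction to $1<p,q,r,s<\infty$ by ``coarse invariants'' does not work: reflexivity, containment of $c_0$ and of $\ell_1$, the Dunford--Pettis property, and Rademacher type/cotype all fail to separate the ordered pairs in which an extreme index occurs. Concretely, $L_1(L_p)$ and $L_p(L_1)$ with $1<p<\infty$ are both non-reflexive, both contain $\ell_1$ and not $c_0$, both fail DPP (each contains a complemented copy of $L_p$), and both have type $1$ and cotype $\max\{2,p\}$; the same blindness occurs for $L_q(L_\infty)$ versus $L_\infty(L_q)$ and for $L_1(L_\infty)$ versus $L_\infty(L_1)$. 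These ``wrong order with an extreme index'' cases are precisely where the paper has to invoke a deep result of Bourgain--Casazza--Lindenstrauss--Tzafriri (Theorem~\ref{thm:BCLZ}, hereditary uniqueness of the unconditional basis of $\ell_1(\ell_2)$, $\ell_1(c_0)$ and $c_0(\ell_1)$) together with Lemma~\ref{lem:TABases}; indeed even the pair $(1,2)$ versus $(2,1)$, i.e.\ $L_2(L_1)$ versus $L_1(L_2)$, is resolved in the paper only through that theorem. Your scheme leaves all of these pairs undistinguished, and your rigidity theorem (stated only for $1<r,s<\infty$) cannot be applied to them.

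Second, the rigidity statement that you correctly identify as the heart of the matter --- if $\ell_b(\ell_a)$ embeds complementably in $L_s(L_r)$, $1<r,s<\infty$, then $(a,b)=(r,s)$ up to the Hilbertian collapse --- is not proved in your proposal; it is only conjectured to follow from ``the fine subspace theory of $L_r$'' plus finite-dimensional comparisons. This is exactly what the paper's Section~\ref{sec:unconditional} is for: Lemmas~\ref{lem:Embedding}, \ref{lem:Ulq} and \ref{lem:UlqDisjoint} (Kadec--Pe\l czy\'nski-type disjointification of both the basis vectors and the projecting functionals), convexification (Lemma~\ref{lem:CCIneq}), and duality, culminating in Proposition~\ref{prop:MixedNormLsLr} and Theorem~\ref{thm:MixedNormLsLrComp}; these in turn feed the finite-dimensional relation $\preceq$ through Proposition~\ref{prop:motivation} (whose converse direction needs ultraproducts, the L\'evy--Raynaud representation of ultraproducts of $L_s(L_r)$-spaces, and complementation in the bidual, Theorem~\ref{thm:bidual}). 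None of this is off-the-shelf subspace theory of $L_r$: the history recalled in the introduction (Pe\l czy\'nski's claim for the sequence-space case was not written down until 2011) is evidence that the step you defer is the substantive content of the theorem. As it stands, your proposal is a correct reduction of the problem to two statements --- separation of the extreme-index pairs and the mixed-norm rigidity theorem --- but proves neither.
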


To contextualize our work, we briefly discuss which spaces can be told apart by making the most of previous study of the geometry of mixed-norm Lebesgue spaces. In 1985, Raynaud characterized the spaces $\ell_r$ that embed into $L_q(L_p)$ when $1\le p\le q<\infty$ (see \cite{Raynaud1985}*{Theorem 1}) and the spaces $Z_{r,s}$ that embed into $L_q(L_p)$ when $1\le q\le p<\infty$ (see \cite{Raynaud1985}*{Theorem 3}). Using mutual embeddability and duality, along with some basic Banach-space theory knowledge, allows for distinguishing nearly all non-obviously isomorphic separable spaces of the form $L_q(L_p)$ through specific casework. Two disjoint classes of such spaces, within each of which isomorphic classification is not easily achievable using this method, remain. The first class is
\[
\Rt_1=\{L_1(L_p):1<p\le2\},
\]
and the second is
\[
\Rt_2=\{L_p(L_1):1<p\le2\}.
\]
Besides, the aforementioned Raynaud's theorems allow us to distinguish a space from $\Rt_1$ and another from $\Rt_2$.

The intuitive reason the classes $\Rt_1$ and $\Rt_2$ resist this approach is that two spaces within one of them are embedded into each other, and their duals are embedded into each other as well. To make progress in each of these classes, a natural line of inquiry is to refine Raynaud's theorems by characterizing their complemented $\ell_q$-subspaces. In this regard, Corollary~\ref{cor:lpcomplemented} will serve our purposes.

Once separable mixed-norm Lebesgue spaces are classified by isomorphism, one should take care of studying the spaces $L_p(L_q)$ when $\max\{p,q\}=\infty$. Since these spaces have received limited attention in this context, we now enter a more uncertain area. By studying the finite-dimensional spaces $\ell_q^n(\ell_p^n)$, $n\in\NN$, which sufficiently describe the spaces $L_q(L_p)$, we see that if $L_q(L_p)\simeq L_r(L_s)$, then also $L_{q'}(L_{p'})\simeq L_{r'}(L_{s'})$
{\Anso (this follows from Proposition~\ref{prop:motivation}) below.}
Now, there is a single case that cannot be simplified to a known one. Namely, telling apart the spaces $L_\infty(L_1)$ and $L_1(L_\infty)$. If these spaces were isomorphic, then Proposition~\ref{prop:motivation} would give that the spaces $\ell_1^n(\ell_\infty^n)$ and $\ell_\infty^n(\ell_1^n)$, $n\in\NN$, would be uniformly complemented in subsequence of each other, which contradicts the findings of Bourgain et al.\@ \cite{BCLT1985}.

Although a more direct roadmap for deriving Theorem~\ref{thm:Main} from Proposition~\ref{prop:motivation} and Corollary~\ref{cor:lpcomplemented} can be devised, we have chosen to write the article in a way that the results obtained do not depend on the advances achieved previously. Our motives for doing so are two-sided. On the one hand, we will use our techniques to provide new proofs of Theorems~\ref{thm:CM} and \ref{thm:AA}. On the other hand, the straight route towards Theorem~\ref{thm:Main} would leave some side results, which could be interesting in their own right, out. Notwithstanding, aiming to fit the expectations of the readers interested solely in Theorem~\ref{thm:Main}, we have outlined an alternative proof of it in Section~\ref{sec:appendix}.

The organization of the paper is as follows. In Section~\ref{sec:prelim}, we introduce some notation and preliminary results that will be employed later on. In particular, to substantiate the guess that finite-dimensional spaces $\ell_q^n(\ell_p^n)$ retain enough information about mixed-norm Lebesgue spaces $L_q(L_p)$, we define a suitable partial ordering on the set of indices $[1,\infty]^2$. This order relation will allow us, with the help of some ultrafilter results by Levy and Raynaud \cite{LevyRaynaud1984}, to properly state and prove the above mentioned Proposition~\ref{prop:motivation}.

In Section~\ref{sec:unconditional}, we achieve some results concerning the embeddability of sequence spaces into mixed-norm Lebesgue spaces, which will be an essential element of our proof of Theorem~\ref{thm:Main}. Then, in Section~\ref{sec:proof}, we complete this proof, and we give new proofs of Theorem~\ref{thm:CM} and Theorem~\ref{thm:AA}. Finally, in Section~\ref{sec:Raynaud}, we use the machinery we developed to prove Theorem~\ref{thm:Main} to study when $\ell_p$ complementably embeds into $L_s(L_r)$, as well as to revisit some results of Raynaud \cite{Raynaud1985} concerning the embeddability of $\ell_p$-spaces into mixed-norm Lebesgue spaces.

%--------------------------------------------
\section{\texorpdfstring{$L_q(L_p)$}{}-spaces in a nutshell}\label{sec:prelim}\noindent
%--------------------------------------------
In this section we single out the basics of the theory of mixed-norm Lebesgue spaces. All the results we record are either known or a ready consequence of standard techniques.

We say that a family $(\XX_\lambda)_{\lambda\in\Lambda}$ of Banach spaces over the real or complex field $\FF$ $C$-\emph{isomorphically embeds} into another family $(\YY_\lambda)_{\lambda\in\Lambda}$, $1\le C<\infty$, and we put
\[
(\XX_\lambda)_{\lambda\in\Lambda} \sqsubseteq_C (\YY_\lambda)_{\lambda\in\Lambda},
\]
if for each $\lambda\in\Lambda$ there is an isomorphic embedding $J_\lambda\colon \XX_\lambda \to \YY_\lambda$ with
\[
\norm{J_\lambda} \norm{J_\lambda^{-1}} \le C.
\]

In the case when the family $(J_\lambda)_{\lambda\in\Lambda}$ is a lifting of a suitable family of projections, i.e., there are linear maps $P_\lambda\colon \YY_\lambda\to \XX_\lambda$ such that $P_\lambda\circ J_\lambda=\Id_{\XX_\lambda}$ and
\[
\norm{J_\lambda} \norm{P_\lambda} \le C, \quad\lambda\in\Lambda,
\]
we say that $(\XX_\lambda)_{\lambda\in\Lambda}$ \emph{$C$-complementably embeds} into $(\YY_\lambda)_{\lambda\in\Lambda}$, and write
\[
(\XX_\lambda)_{\lambda\in\Lambda} \trianglelefteq_C (\YY_\lambda)_{\lambda\in\Lambda}.
\]
If $\XX_\lambda=\XX$ (resp., $\YY_\lambda=\YY$) for all $\lambda\in\Lambda$, we replace $(\XX_\lambda)_{\lambda\in\Lambda}$ by $\XX$ (resp., $(\YY_\lambda)_{\lambda\in\Lambda}$ by $\YY$) in the above terminology. If the constant $C$ is irrelevant, we simply drop it from the notation. The symbol $\XX\simeq \YY$ means that the Banach spaces $\XX$ and $\YY$ are isomorphic.

If a Banach space $\XX$ complementably embeds into another Banach space $\YY$, then we can decompose the space $\YY$ in two pieces. Namely, there is a third Banach space $\WW$ such that
\[
\YY\simeq \XX\oplus \WW.
\]
Thus, complemented embeddability is often more convenient than isomorphic embeddability for studying the isomorphic theory of Banach spaces. Note that the former condition is stronger than the latter. We will use the following consequence of the principle of local reflexivity.

\begin{proposition}[see \cite{JRZ71}*{Theorem~3.3}]\label{prop:CPLR}
Let $\VV$ and $\XX$ be Banach spaces with $\dim(\VV)<\infty$. Let $C\ge 1$ be such that $\VV \trianglelefteq_C \XX^{**}$. Then, $\VV \trianglelefteq_D \XX$ for all $D>C$.
\end{proposition}
%--------------------------------------------
\subsection{\texorpdfstring{$L_q(L_p)$}{}-spaces as vector-valued \texorpdfstring{$L_q$}{}-spaces}
%--------------------------------------------
\emph{Function norms} as defined in the nowadays classical manual \cite{BennettSharpley1988} are a natural framework to define mixed-norm Lebesgue spaces. Given a $\sigma$-finite measure space $(\Omega,\Sigma,\mu)$ and a Banach space $(\XX, \norm{\cdot})$, we denote by $L_0(\mu,\XX)$ the vector space consisting of all strongly measurable functions from $\Omega$ to $\XX$. As usual, we identify functions that differ on a null set. Similarly, we identify measurable sets $A$, $B\in\Omega$ such that $\mu(A\ominus B)=0$. The support of $f\in L_0(\mu,\XX)$ is the set
\[
\supp_\mu(f)=f^{-1}(\XX\setminus\{0\}).
\]
Let $L_0^+(\mu)$ be the positive cone of $L_0(\mu)=L_0(\mu,\FF)$. If $\rho\colon L_0^+(\mu) \to [0,\infty]$ is a function norm, then the vector-valued function space
\[
\LL_\rho(\XX)=\enbrace{f\in L_0(\mu,\XX) \colon \norm{f}_{\LL_\rho(\XX)}= \rho\enpar{\norm{f}_\XX}<\infty}
\]
is a Banach space. Note that the scalar-valued space $\LL_\rho=\LL_\rho(\FF)$ is a Banach lattice.

We focus on vector-valued spaces that arise from the function norm $\norm{\cdot}_q$, $1\le q\le \infty$. Set
\[
L_q(\mu,\XX)=\enbrace{f\in L_0(\mu,\XX) \colon \left( \int_\Omega \norm{f}_\XX^q d\mu\right)^{1/q}<\infty},
\]
with the usual modification if $q=\infty$. If $\mu$ is the Lebesgue measure on $[0,1]$ we set $L_q(\mu,\XX)=L_q(\XX)$. If $\mu$ is the counting measure on a set $\Nt$, we set $L_q(\mu,\XX)=\ell_q(\Nt,\XX)$. Set also
\[
\ell_q(\XX)=\ell_q(\NN,\XX), \quad \ell_q^n(\XX)=\ell_q(\NN_n,\XX), \quad n\in\NN,
\]
where $\NN_n=\{1,\dots,n\}$.

The well-known isomorphisms between scalar-valued $L_q$-spaces over different measure spaces (see e.g.\@ \cite{AlbiacKalton2016}*{Chapter 6}) pass with minor modifications to Banach-valued spaces. In fact, while $\ell_q(\XX)$ and $L_q(\XX)$ are not isomorphic in general,
\begin{equation}\label{eq:Lqmu01}
L_q(\mu,\XX) \simeq L_q(\XX)
\end{equation}
for every $1\le q <\infty$, every Banach space $\XX$, and every $\sigma$-finite, separable and not purely atomic measure space $\mu$. In particular, if the measures $\mu_1$ and $\mu_2$ are $\sigma$-finite, separable and not purely atomic, and $p\in[1,\infty]$ and $q\in[1,\infty)$, then
\[
L_q(\mu_1,L_p(\mu_2))\simeq L_q(L_p).
\]
In this paper we will mainly focus on studying the Banach spaces $L_q(L_p)$, $p$, $q\in[1,\infty]$. In the separable case, that is, when $p$, $q\in[1,\infty)$, we can regard these spaces as scalar-valued spaces built from function norms over $[0,1]^2$. In general, given two absolutely continuous function norms $\rho_i$, $i=1$, $2$, over $\sigma$-finite measure spaces $(\Omega_i,\Sigma_i,\mu_i)$, we can construct a product function norm $\rho_1(\rho_2)$ over the product space $(\Omega_1\times \Omega_2,\Sigma_1\otimes \Sigma_2,\mu_1\otimes \mu_2)$ (see e.g.\@ \cite{AnsorenaBello2022}*{Proposition 11}), and we can identify the lattices $\LL_{\rho_1}(\LL_{\rho_2})$ and $\LL_{\rho_1(\rho_2)}$. As Lebesgue spaces are concerned, given $p$, $q\in[1,\infty)$ and $\sigma$-finite measure spaces $(\Omega_i,\Sigma_i,\mu_i)$, $i=1$, $2$, we can identify the vector-valued space $L_q(\mu_1,L_p(\mu_2,\XX))$ with the mixed-norm space consisting of all functions $f\in L_0(\mu_1\otimes \mu_2,\XX)$ such that
\[
\enpar{\int_{\Omega_1}\enpar{ \int_{\Omega_2} \norm{f(x,y)}_\XX^q d\mu_2(y)}^{p/q} d\mu_1(x)}^{1/p} <\infty.
\]
So, for vectors $f\in L_q(\mu_1,L_p(\mu_2))$ we can consider both $\supp_{\mu_1}(f)\in \Sigma_1$ and $\supp_{\mu_1\otimes\mu_2} (f)\in \Sigma_1\otimes \Sigma_2$. If necessary, to avoid confusion, we will call the latter one the \emph{lattice support} of $f$.

Given $1\le p<\infty$, $L_p(L_p)=L_p([0,1]^2)$, whence
\begin{equation*}
\ell_p(L_p(\XX)) \simeq L_p(\XX) \simeq L_p(L_p(\XX))
\end{equation*}
for any Banach space $\XX$. In constract, the indicator function of the triangle $\{(s,t)\in[0,1]^2 \colon s\le t\}$, which fails to be strongly measurable when regarded as function from $[0,1]$ to $L_\infty$, witnesses that $L_\infty(L_\infty)\subsetneq L_\infty([0,1]^2)$.
{\Anso In fact, these spaces fail to be isomorphic (see \cite{Daher2014}*{Corollary 3.4}). Hence, in the case when $p=q=\infty$ we have two non-isomorphic Lebesgue spaces; namely,}
\[
\ell_\infty\simeq L_\infty \simeq \ell_\infty(L_\infty) \simeq \ell_\infty(\ell_\infty), \quad L_\infty(L_\infty)\simeq L_\infty(\ell_\infty).
\]
{\Anso Besides, $L_\infty\simeq L_\infty(\ell_\infty^n)$ isometrically for all $n\in\NN$.}

Let us write down other isomorphisms that interest us. Pe{\l}czy\'{n}ski \cite{Pel1960} proved that
\begin{equation}\label{eq:LpLpL_2}
L_q(L_2) \simeq L_q(\ell_2) \simeq L_q, \quad 1<q<\infty,
\end{equation}
and
\begin{equation}\label{eq:Bpqlq}
B_{2,q}\simeq \ell_q, \quad 1<q<\infty.
\end{equation}
Both isomorphisms are consequences of the fact that
\begin{equation}\label{eq:FDp2}
(\ell_2^n)_{n=1}^\infty \trianglelefteq (\ell_q^{2^n})_{n=1}^\infty, \quad 1<q<\infty,
\end{equation}
which lead to
\begin{equation}\label{eq:PelAgain}
\ell_2\trianglelefteq L_q, \quad 1<q<\infty.
\end{equation}
As for negative results, we will use that
\begin{equation}\label{eq:LinPel}
\ell_2\not\trianglelefteq L_1
\end{equation}
(see \cite{LinPel1968}*{Theorem 6.1}).

Finally we recall a standard result about Rademacher type and cotype, which we will use even when the spaces are locally nonconvex.

\begin{theorem}\label{thm:TCLqLp}
Let $p$, $q\in(0,\infty]$. If $\max\{p,q\}=\infty$, then $L_q(L_p)$ has no finite cotype and, hence, no type greater than one. Otherwise, the optimal Rademacher type of $L_q(L_p)$ is $\min\{p,q,2\}$, and its optimal cotype is $\max\{p,q,2\}$.
\end{theorem}
%--------------------------------------------
\subsection{Simple functions and density}
%--------------------------------------------
A function norm $\rho$ over a $\sigma$-finite measure space $(\Omega,\Sigma,\mu)$ is said to be \emph{absolutely continuous} if
\[
\lim_{\mu(E) \to 0} \rho(f\chi_E)=0
\]
for every $f\in L_0^+(\mu)$ with $\rho(f)<\infty$. Given $1\le p<\infty$, $\norm{\cdot}_p$ is absolutely continuous, while $\norm{\cdot}_\infty$ is not unless $\Sigma$ is finite.

If $\rho$ is absolutely continuous, then the dominated convergence theorem holds in $\LL_\rho$, that is, $\lim_n \rho(f_n-f)=0$ provided that the sequence $(f_n)_{n=1}^\infty$ in $L_0(\mu)$ converges to $f$ $\mu$-a.\@e.\@ and $\rho(\sup_n f_n)<\infty$.

The proof of the following result concerning the density of the space $\Simp(\mu,\VV)$ consisting of all simple functions with values in a given vector space $\VV\subseteq\XX$ is standard.

\begin{lemma}\label{lem:density}
Let $\rho$ be an absolutely continuous function norm, and let $\XX$ be a Banach space. If $\VV$ is a dense subspace of $\XX$, then $\Simp(\mu,\VV)$ is dense in $\LL_\rho(\XX)$.
\end{lemma}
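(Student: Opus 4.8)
The plan is to approximate a given $f\in\LL_\rho(\XX)$ in two stages: first by an integrable $\XX$-valued simple function, and then by an element of $\Simp(\mu,\VV)$. Throughout I will exploit the monotonicity, positive homogeneity and subadditivity of $\rho$ on the cone $L_0^+(\mu)$, together with the scalar characterization of the density of integrable simple functions recalled above and the uniform approximation of bounded strongly measurable functions by simple functions quoted from \cite{DiestelUhl1977}. Write $g=\norm{f}_\XX\in L_0^+(\mu)$, so that $\rho(g)<\infty$; for $\epsilon>0$ I fix a nonnegative integrable simple function $s\le g$ with $\rho(g-s)<\epsilon$, noting that such an $s$ is automatically supported on a set $E$ of finite measure and bounded, say by $M=\norm{s}_\infty$.

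The key step is a truncation that replaces $f$ by a bounded function supported on a set of finite measure at a controlled cost. I set $G=\{\omega\colon g(\omega)>0,\ s(\omega)\ge g(\omega)/2\}$, which is measurable and, since $s>0$ on $G$, satisfies $G\subseteq E$; moreover $g\le 2s\le 2M$ on $G$, so $f\chi_G$ is a bounded strongly measurable function supported on the finite-measure set $G$. For the error I observe that $\norm{f-f\chi_G}_\XX=g\chi_{G^c}$, and that on $G^c\cap\{g>0\}$ one has $s<g/2$, whence $g<2(g-s)$ there; consequently $g\chi_{G^c}\le 2(g-s)$ pointwise and $\rho(\norm{f-f\chi_G}_\XX)\le 2\rho(g-s)<2\epsilon$. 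This is the step I expect to be the main obstacle, since it is precisely here that the hypothesis on $\LL_\rho$ must be made to absorb both the tail outside a finite-measure set and the region where $\norm{f}_\XX$ is large, without assuming full absolute continuity of $\rho$.

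Once $f$ has been localized, I would apply the quoted result to $f\chi_G$ to obtain $\XX$-valued simple functions $\tau_k$ with $\norm{\tau_k}_\XX\le\norm{f\chi_G}_\XX$ (so that each $\tau_k$ vanishes off $G$ and is therefore an integrable simple function) and $\norm{\tau_k-f\chi_G}_{L_\infty(\mu,\XX)}\to 0$. Since $\mu(G)<\infty$ forces $\rho(\chi_G)<\infty$, the pointwise bound $\norm{f\chi_G-\tau_k}_\XX\le\norm{f\chi_G-\tau_k}_{L_\infty(\mu,\XX)}\,\chi_G$ gives $\rho(\norm{f\chi_G-\tau_k}_\XX)\le\norm{f\chi_G-\tau_k}_{L_\infty(\mu,\XX)}\,\rho(\chi_G)\to 0$, so some $\tau_k$ approximates $f\chi_G$ in $\LL_\rho(\XX)$ to within $\epsilon$. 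Finally, writing $\tau_k=\sum_{i=1}^m x_i\chi_{A_i}$ with the $A_i$ pairwise disjoint of finite measure, I choose $v_i\in\VV$ with $\norm{x_i-v_i}_\XX$ small and set $\phi=\sum_{i=1}^m v_i\chi_{A_i}\in\Simp(\mu,\VV)$; then $\norm{\tau_k-\phi}_\XX\le\enpar{\max_i\norm{x_i-v_i}_\XX}\chi_{\cup_i A_i}$, and since $\rho(\chi_{\cup_i A_i})<\infty$ this contribution is as small as desired. Combining the three estimates via the triangle inequality for $\norm{\cdot}_\XX$ and the subadditivity of $\rho$ makes $\rho(\norm{f-\phi}_\XX)$ arbitrarily small, establishing the density of $\Simp(\mu,\VV)$ in $\LL_\rho(\XX)$.
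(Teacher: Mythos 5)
Your argument is correct, but it follows a genuinely different route from the paper's. The paper factorizes $f$ multiplicatively: writing $g=\norm{f}_\XX$ and $f=gw$ with $\norm{w}_\XX=1$, it approximates the two factors separately --- $g$ by integrable simple functions $g_n\le g$ (the scalar density hypothesis) and the normalized direction $w$ uniformly by simple functions $w_n$ with $\norm{w_n}_\XX\le 1$ (the Diestel--Uhl fact) --- and takes $f_n=g_nw_n$; the error then splits as $\rho(g)\norm{w_n-w}_{L_\infty(\mu,\XX)}+\rho(g-g_n)$, so the norm of a characteristic function never enters. You instead use the scalar approximant $s$ to build a truncation set $G=\enbrace{g>0,\ s\ge g/2}$, control the tail additively via $g\chi_{\Omega\setminus G}\le 2(g-s)$, and then uniformly approximate the bounded, finite-measure-supported function $f\chi_G$ directly, paying a factor $\rho(\chi_G)$. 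This is the one place where you use something the paper's proof does not: the finiteness of $\rho(\chi_E)$ for sets of finite measure, i.e.\ property (P4) of the Bennett--Sharpley definition of function norm, which the paper adopts, so the step is legitimate; you could even avoid invoking (P4) by noting that on $G$ the simple function $s$ is bounded below by its least nonzero value $c>0$, whence $\chi_G\le s/c$ and $\rho(\chi_G)\le\rho(g)/c<\infty$. What each approach buys: the paper's factorization is shorter and needs only monotonicity, homogeneity and subadditivity of $\rho$ (it reduces the case of general $\VV$ to $\VV=\XX$ with a one-line remark), while your truncation argument is more hands-on and has the merit of carrying out explicitly both the localization and the final perturbation of the values $x_i$ into $\VV$, a reduction the paper leaves to the reader.
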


We point out that, if $\mu$ is a finite measure over an infinite $\sigma$-algebra, then simple functions are dense in $L_\infty(\mu)$ despite $\norm{\cdot}_\infty$ is not absolutely continuous. However, $\Simp(\mu,\XX)$ fails to be dense in $L_\infty(\mu,\XX)$ as long as $\XX$ is infinite-dimensional (see e.g.\@ \cite{DiestelUhl1977}).
%--------------------------------------------
\subsection{Finite dimensional structure}
%--------------------------------------------
Given Banach spaces $\XX$ and $\YY$, a bounded linear map $T\colon\XX\to\YY$, and a measure space $(\Omega,\Sigma,\mu)$ we consider the mapping
\[
T_\mu\colon L_0(\mu,\XX) \to L_0(\mu,\YY), \quad f\mapsto T\circ f.
\]
For any function norm $\rho$ over $\mu$, the linear map $T_\mu$ restricts to a linear map from $\LL_\rho(\XX)$ to $\LL_\rho(\YY)$ whose norm is bounded by $\norm{T}$.

The isomorphim \eqref{eq:Lqmu01} partially relies on the posibility of averaging in Banach-valued Lebesgue spaces. Given a partition $\Pt$ of $[0,1]$ into Borel sets there is a natural isometric embedding
\begin{equation}\label{eq:EmblqLq}
J_\Pt=J_\Pt[\XX] \colon \ell_q(\Pt,\XX) \to L_q(\XX), \quad (x_A)_{A\in\Pt} \mapsto \sum_{A\in\Pt} x_A \frac{\chi_A}{\abs{A}^{1/q}}.
\end{equation}
We also consider the averaging map
\begin{equation}\label{proj:EmblqLq}
Q_{\Pt} =Q_{\Pt}[\XX]\colon L_q(\XX) \to \ell_q(\Pt,\XX), \quad f \mapsto\enpar{\abs{A}^{1/q-1} \int_A f}_{A\in\Pt}.
\end{equation}
The operator $Q_{\Pt}$ is a linear contraction, and $Q_{\Pt}\circ J_{\Pt}=\Id_{\ell_q(\Pt,\XX)}$.

Let $1\le q\le\infty$, $\Pt$ be a partition of $[0,1]$ into measurable sets, $\XX$ and $\YY$ be Banach spaces, and $J\colon\YY\to \XX$ and $Q\colon\XX\to\YY$ be bounded linear maps with $Q\circ J=\Id_\YY$. The maps
\begin{align*}
J_{\Pt,\mu}&:=J_\Pt[\XX] \circ J_\mu \colon \ell_q\enpar{\Pt, \YY}\to L_q(\XX), \\
Q_{\Pt,\mu}&:=Q_\Pt[\YY] \circ Q_\mu \colon L_q(\XX) \to \ell_q\enpar{\Pt, \YY},
\end{align*}
where $J_{\Pt}$ and $Q_{\Pt}$ are defined as in \eqref{eq:EmblqLq} and \eqref{proj:EmblqLq}, respectively, satisfy $ Q_{\Pt,\mu} \circ J_{\Pt,\mu}=\Id_{ \ell_q\enpar{\Pt, \XX}}$ and $\norm{Q_{\Pt,\mu}} \norm{J_{\Pt,\mu}} \le \norm{J} \norm{Q}$. Consequently, the following holds.

\begin{lemma}\label{lem:AnsoNew}
Let $1\le q\le \infty$, and $\XX$ and $\YY$ be Banach spaces with $\XX\trianglelefteq \YY$. Then
$\ell_q(\XX)\trianglelefteq L_q(\YY)$.
\end{lemma}

Let $1\le p\le \infty$ and suppose that $\XX$ is an $\SL_p$-space, so that there are $C\in[1,\infty)$, a directed set $\It$, and, for each $i\in \It$, $k_i\in\NN$ and linear maps $J_i\colon \ell_p^{k_i}\to \XX$ and $Q_i\colon \XX\to \ell_p^{k_i}$ such that
\[
Q_i \circ J_i=\Id_{\ell_p^{k_i}}, \quad \norm{J_i}\norm{Q_i} \le C.
\]

Assume that $1\le q<\infty$. Bearing in mind Lemma~\ref{lem:density}, the maps $(J_i)_{\Pt,\mu}$ and $(Q_i)_{\Pt,\mu}$, where $i$ runs over $\It$ and $\Pt$ runs over all finite partitions of $[0,1]$ into measurable sets, witness that the finite dimensional structures of $\ell_q(\XX)$ and $L_q(\XX)$ coincide provided that $q<\infty$. To be precise, we have the following result. Before stating it, we clarify that a net $(A_\lambda)_{\lambda=\Lambda}$ of subsets of a containing set $\Omega$ will be said to be non-decreasing if it is relative to the inclusion ordering, that is, $A_\lambda\subseteq A_\mu$ provided that $\lambda\le\mu$.

\begin{lemma}\label{lem:LqLpvxlqlp}
Let $p$, $q\in [1,\infty]$ and $\XX$ be an $\SL_p$-space. Suppose that $q<\infty$, or $q=\infty$ and $\XX=L_\infty$. Then, there are a directed set $\Lambda$ and, for each $\lambda\in\Lambda$, $k_\lambda\in\NN$ and linear maps
\[
I_\lambda\colon \ell_q^{k_\lambda}(\ell_p^{k_\lambda})\to L_q(\XX), \quad P_\lambda\colon L_q(\XX) \to \ell_q^{k_\lambda}(\ell_p^{k_\lambda})
\]
such that
\begin{itemize}[leftmargin=*]
\item $\sup_\lambda \norm{I_\lambda} \norm{P_\lambda} <\infty$;
\item $P_\lambda\circ I_\lambda =\Id_{ \ell_q^{n_\lambda}(\ell_p^{n_\lambda})}$ for all $\lambda\in\Lambda$; and
\item $\enpar{ I_\lambda(\ell_q^{k_\lambda}(\ell_p^{k_\lambda}))}_{\lambda\in\Lambda}$ is a non-decreasing net whose union is dense in $L_q(\XX)$.
\end{itemize}
\end{lemma}

\begin{proof}
\Anso{
Only the case $q=\infty$ and $\XX=L_\infty$, which asserts that $\YY:=L_\infty(L_\infty)$ is an $\SL_\infty$-space, is in doubt. In this case, we still have a directed set $\It$ and, for each $i\in \It$, $k_i\in\NN$ and linear maps
\[
J_i\colon \YY_i:=L_\infty(\ell_\infty^{k_i})\to \YY, \quad Q_i\colon \YY\to\YY_i
\]
such that
$Q_i \circ J_i=\Id_{\YY_i}$, $\sup_{i\in\It} \norm{J_i}\norm{Q_i} <\infty$, and $(J_i(\YY_i))_{i\in\It}$ is a non-decreasing net whose union is a dense subspace of $\YY$. Since each space $\YY_i$ is isometric to $L_\infty$, we are done.
}
\end{proof}
%--------------------------------------------
\subsection{Duality}
%--------------------------------------------
The dual function norm of a function norm $\rho$ over a $\sigma$-finite measure space $(\Omega,\Sigma,\mu)$ is the function norm $\rho^*$ defined as
\[
\rho^*(f)=\sup\enbrace{\int_\Omega g f\, d\mu \colon g\in L_0^+(\mu),\, \rho(g) \le 1}, \quad f\in L_0^+(\mu).
\]
Given $1\le p \le\infty$, the dual function norm of $\norm{\cdot}_p$ is $\norm{\cdot}_{p'}$, where $p' \in[1,\infty]$ is the conjugate index defined by
\[
\frac{1}{p'} + \frac{1}{p}=1.
\]

The following result encloses a straightforward generalization of the fact that the dual space of $L_p(\XX)$ is $L_{p'}(\XX^*)$ provided that $1\le p<\infty$ and $\XX^*$ has the Radon--Nikodym property (see \cite{DiestelUhl1977}*{Theorem~1 of Chapter~9}). Since, to the best of our knowledge, it is not available in the literature, we include its proof.

\begin{theorem}\label{thm:weakclose}
Let $\XX$ be a Banach space and $\rho$ be a function norm over a $\sigma$-finite measure space $(\Omega,\Sigma,\mu)$.

\begin{enumerate}[label=(\alph*)]
\item\label{Dual:A}
{\Anso Let $(D_n)_{n=1}^\infty$ be a non-decreasing sequence in $\Sigma$ with $\cup_{n=1}^\infty D_n=\Omega$.}
Let $S$ be the set consisting of all functions $f\in\Simp(\mu,\XX)$ such that $\norm{f}_{\LL_\rho(\XX)}\le 1$ and $\supp_\mu(f)\subseteq D_n$ for some $n\in\NN$. Then, for every $g\in L_0(\mu,\XX^*)$,
\begin{equation}\label{eq:DualityVVFS}
{\Anso \norm{g}_{\LL_{\rho^*}(\XX^*)}}
=N(g):=\sup\enbrace{ \abs{\int_\Omega g(f) \, d\mu} \colon f\in S, \, fg\in L_1(\mu)}.
\end{equation}
\item\label{Dual:B} The map $D\colon \LL_{\rho*}(\XX^*) \to \enpar{\LL_\rho(\XX)}^*$ given by
\[
D(g)= \int_\Omega g(f) \, d\mu, \quad g\in \LL_{\rho*}(\XX^*), \, f\in\LL_\rho(\XX),
\]
is an isometric embedding.
\item\label{Dual:C} If $\rho$ is absolutely continuous and $\XX^*$ has the Radon--Nikodym property, then
$D$ is onto. Besides, if we think of $ \LL_{\rho^*}(\XX^*) $ as the dual space of $\LL_\rho(\XX)$, $\LL_{\rho*}(A,\XX^*)$ is weak*-closed for every $A\in\Sigma$.
\end{enumerate}
\end{theorem}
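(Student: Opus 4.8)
The plan is to establish the three assertions in order, leveraging the scalar-valued duality theory for function norms recorded in \cite{BennettSharpley1988} together with the Radon--Nikodym machinery from \cite{DiestelUhl1977}.

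For part~\ref{Dual:A}, the inequality $N(g)\le\norm{g}_{\LL_\rho(\XX^*)}$ is the easy direction: for any admissible $f\in S$ one has the pointwise bound $\abs{g(f)}\le\norm{g}_{\XX^*}\norm{f}_\XX$, and then $\int_\Omega\abs{g(f)}\,d\mu\le\rho^*(\norm{g}_{\XX^*})\,\rho(\norm{f}_\XX)\le\norm{g}_{\LL_\rho(\XX^*)}$ by the scalar duality $\int hk\,d\mu\le\rho^*(h)\rho(k)$ and the normalization $\norm{f}_{\LL_\rho(\XX)}\le1$. The reverse inequality is where the real work lies. The idea is to reduce to the scalar duality $\rho(\norm{g}_{\XX^*})=\sup\{\int\norm{g}_{\XX^*}\,k\,d\mu:\rho^*(k)\le1\}$ (applied to $\rho^{**}=\rho$), and then to \emph{almost realize} the pointwise supremum defining $\norm{g(\omega)}_{\XX^*}=\sup\{g(\omega)(x):\norm{x}_\XX\le1\}$ by a single strongly measurable unit-vector field $w\colon\Omega\to\XX$ chosen so that $g(w)\ge\norm{g}_{\XX^*}-\varepsilon$ pointwise on a large set; multiplying $w$ by a suitable scalar simple function that nearly attains the scalar duality and truncating its support into some $D_n$ yields an admissible test function $f\in S$. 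Constructing this measurable selection $w$ is the main obstacle, and I would handle it via a measurable-selection or exhaustion argument together with the density of simple functions (Lemma~\ref{lem:density}) to ensure $f$ can be taken simple; the monotone exhaustion by $(D_n)$ takes care of integrability so that $fg\in L_1(\mu)$.

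For part~\ref{Dual:B}, boundedness and the norm inequality $\norm{D(g)}\le\norm{g}_{\LL_{\rho^*}(\XX^*)}$ follow from the same Hölder-type estimate as above, now with the roles of $\rho$ and $\rho^*$ interchanged. That $D$ is a norm-preserving embedding is then exactly the content of part~\ref{Dual:A} read with $\rho$ replaced by $\rho^*$: since $(\rho^*)^*=\rho$, the quantity $N(g)$ computed for $\rho^*$ equals $\norm{g}_{\LL_{\rho^*}(\XX^*)}$, and $N(g)\le\norm{D(g)}$ because every admissible $f$ is a legitimate element of $\LL_\rho(\XX)$ of norm at most one. Hence $\norm{D(g)}=\norm{g}_{\LL_{\rho^*}(\XX^*)}$ and $D$ is isometric.

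For part~\ref{Dual:C}, surjectivity is where the Radon--Nikodym property enters. Given $\varphi\in(\LL_\rho(\XX))^*$, I would first exploit absolute continuity of $\rho$ to guarantee that simple functions are dense, so $\varphi$ is determined by its action on $\Simp(\mu,\XX)$. Restricting $\varphi$ to functions supported on a fixed finite-measure set and composing with the vector measure $A\mapsto\varphi(\chi_A\,x)$ produces, for each $x\in\XX$, a finitely additive set function; the hypothesis that $\XX^*$ has the Radon--Nikodym property lets me represent $\varphi$ by a density $g\in L_0(\mu,\XX^*)$, following the scheme in \cite{DiestelUhl1977}*{Theorem~1 of Chapter~9}. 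Part~\ref{Dual:A} then forces $\norm{g}_{\LL_{\rho^*}(\XX^*)}=\norm{\varphi}<\infty$, so $g\in\LL_{\rho^*}(\XX^*)$ and $D(g)=\varphi$. The weak*-closedness of $\LL_{\rho^*}(A,\XX^*)$ is a formal consequence: under the identification furnished by $D$, this subspace is the annihilator of the (norm-closed, hence weak*-relevant) set $\{f\in\LL_\rho(\XX):\supp_\mu(f)\subseteq\Omega\setminus A\}$, and annihilators of subsets of a predual are automatically weak*-closed.
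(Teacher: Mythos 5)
Your overall architecture for the three parts (prove (a), get (b) formally, then combine absolute continuity with the Radon--Nikodym property for (c)) is the same as the paper's, and your outlines of (b) and (c) are essentially the paper's arguments. The genuine gap is in part (a), which is the load-bearing statement, and your proposal stops exactly at the hard point: you acknowledge that producing a strongly measurable unit field $w\colon\Omega\to\XX$ with $g(w)\ge\norm{g}_{\XX^*}-\varepsilon$ pointwise is ``the main obstacle'' and then defer it to ``a measurable-selection or exhaustion argument,'' which names the difficulty rather than resolving it. Such a selection does exist (since $g$ is essentially separably valued one can take a countable dense family $(y_j^*)_j$ in its essential range, choose near-norming unit vectors $x_j\in\XX$ for each $y_j^*$, and partition $\Omega$ according to which $y_j^*$ is $\varepsilon$-close to $g(\omega)$), but this must actually be written, and it yields a countably valued $w$, so a further truncation is needed to land in $\Simp(\mu,\XX)$ and to secure $fg\in L_1(\mu)$. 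On that point, your claim that ``the monotone exhaustion by $(D_n)$ takes care of integrability'' is unjustified: nothing in the hypotheses forces $\mu(D_n)<\infty$, and the paper has to impose $\int_{B_n}h\,d\mu<\infty$ on the sets it constructs. The paper avoids measurable selection altogether by a three-step scheme: first $g$ simple (where the selection is a finite choice of near-norming vectors $x_n$ for the finitely many values $x_n^*$), then $g$ bounded and supported on a set of finite measure (by uniform approximation of bounded strongly measurable functions by vector-valued simple functions), then general $g$ by exhausting $\Omega$ by sets on which $g$ is bounded and using the Fatou property of the function norm. Either route is viable, but your sketch carries out neither.

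There is also a $\rho$ versus $\rho^*$ inconsistency running through your part (a) that infects part (b). The identity that (b) and (c) actually require --- and that the paper proves; the $\LL_\rho(\XX^*)$ on the left of \eqref{eq:DualityVVFS} is a typo for $\LL_{\rho^*}(\XX^*)$ --- is $\rho^*\enpar{\norm{g}_{\XX^*}}=N(g)$ with the test functions $f$ ranging in the unit ball of $\LL_\rho(\XX)$; the scalar ingredient for this is just the definition of $\rho^*$ as a supremum over $\rho(k)\le1$, not the Lorentz--Luxemburg theorem $\rho^{**}=\rho$. Your hard direction instead takes $k$ with $\rho^*(k)\le1$, so the resulting test function $f\approx kw$ satisfies $\rho^*(\norm{f}_\XX)\le1$ and need not belong to $S$ at all, while your easy direction bounds $N(g)$ by $\rho^*\enpar{\norm{g}_{\XX^*}}$: the two halves of your argument estimate different quantities. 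Likewise, in (b) the assertion that every $f$ admissible for the ``$\rho^*$-version'' of $N$ lies in the unit ball of $\LL_\rho(\XX)$ is false (the $\rho^*$-ball is not contained in the $\rho$-ball); once (a) is proved in the correct form, (b) is immediate with no relabelling, exactly as the paper says. Your (c) is the paper's argument in outline (vector measure $E\mapsto\varphi(x\chi_E)$, RNP of $\XX^*$, the bound $\norm{g}_{\LL_{\rho^*}(\XX^*)}\le\norm{\varphi}$ via (a), annihilator identity for weak*-closedness), but note two points the paper makes explicit: absolute continuity of $\rho$ is needed not only for density of simple functions but also for the countable additivity of the vector measure, and the nontrivial inclusion of the annihilator of $\LL_\rho(\Omega\setminus A,\XX)$ into $\LL_{\rho^*}(A,\XX^*)$ is itself an application of (a) on $\Omega\setminus A$, so everything here still hinges on completing part (a).
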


\begin{proof}
Let us see \ref{Dual:A}. It is clear that $N(g) \le \norm{g}_{\LL_{\rho^*}(\XX^*)}$. To prove the reverse inequality, we first assume that $g$ is countably valued, and we put $g=\sum_{n\in F} x_n^* \chi_{A_n}$, where $(A_n)_{n\in F}$ pairwise disjoint. Fix $t <\norm{g}_{\LL_\rho(\XX^*)}$. There is a simple function $h\in\Omega\to[0,\infty)$ such that $\rho(h)\le 1$ and
\[
\sum_{n\in F} \norm{x_n^*}_{\XX^*} \int_{A_n} h \, d\mu=\int_\Omega \norm{g}_{\XX^*} h \, d\mu >t.
\]
Now, there are $G\subset F$ finite, $m\in\NN$, $(a_n)_{n\in G}$ in $[0,\infty)$ and $(B_n)_{n\in G}$ in $\Sigma$ such that
$a_n< \norm{x_n^*}_{\XX^*}$, $B_n\subseteq A_n\cap D_m$ and $\int_{B_n} h \, d\mu<\infty$ for all $n\in G$, and
\[
\sum_{n\in G} a_n \int_{B_n} h \, d\mu>t.
\]
Let $(x_n)_{n\in G}$ in $\XX$ be such that $\norm{x_n}_\XX \le 1$ and $x_n^*(x_n)\in(a_n,\infty)$ for all $n\in G$. Set $f=h \sum_{n\in G} x_n \chi_{A_n}$. We have $\norm{f}_{\LL_\rho(\XX)}\le \rho(h)$ and
\[
\int_\Omega g(f)\, d\mu=\sum_{n\in G} x_n^*(x_n) \int_{B_n} h \, d\mu>t.
\]

Suppose now that $g$ is bounded and supported on a finite measure set. Then, $\norm{g}_{\LL_{\rho^*}(\XX^*)}<\infty$ and there is a sequence $(g_n)_{n=1}^\infty$ of countably-valued measurable functions such that
\[
\lim_n \norm{g_n-g}_{\LL_{\rho^*}(\XX^*)}=0.
\]
By approximation, $N(g) = \norm{g}_{\LL_\rho(\XX^*)}$.

To prove \eqref{eq:DualityVVFS} in general, we select a non-decreasing sequence $(A_n)_{n=1}^\infty$ of finite measure sets such that $\cup_{n=1}^\infty A_n=\Omega$, and $g$ is bounded on each set $A_n$. We have
\begin{multline*}
\norm{g}_{\LL_{\rho*}(\XX^*)}
=\sup_{n\in\NN} \norm{g\chi_{A_n}}_{\LL_{\rho*}(\XX^*)}
=\sup_{\substack{n\in\NN \\ f\in S }}
\abs{\int_{\Omega} \enpar{g \chi_{A_n}} (f) \, d\mu}\\
=\sup_{\substack{n\in\NN \\ f\in S }} \abs{\int_{\Omega} g\enpar{f\chi_{A_n}} \, d\mu}
\le N(g).
\end{multline*}

Statement \ref{Dual:B} is a ready consequence of \ref{Dual:A}. To prove \ref{Dual:C}, pick a partition $(E_n)_{n=1}^\infty$ of $\Omega$ into finite measure sets. For each $n\in\NN$, let $(E_n,\Sigma_n,\mu_n)$ be the restriction of $(\Omega,\Sigma,\mu)$ to $E_n$. Given a functional $\varphi\colon \LL_\rho(\XX)\to \FF$, the map
\[
\nu_n\colon\Sigma_n\to \XX^*, \quad \nu_n(E)(x)=\varphi(x\chi_E),\quad E\in\Sigma_n, \, x\in\XX,
\]
is well-defined, has total variation bounded by $\rho(\chi_{E_n}) \norm{\varphi}$, and, since $\rho$ is absolutely continuous, is countably additive. If $g_n\in L_1(\mu_n,\XX^*)$ is the Radon--Nikodym derivative of $\nu_n$, then
\begin{equation*}
\varphi(f)=\int_{E_n} g_n(f)\, d\mu, \quad f\in\Simp(\mu_n,\XX).
\end{equation*}

The vector space $\Simp_0(\mu,\XX)$ consisting of all $f\in\Simp(\mu,\XX)$ for which there is $m\in\NN$ such that $\supp_\mu(f)\subseteq \cup_{n=1}^m E_n$ is dense in $\LL_\rho(\XX)$ by Lemma~\ref{lem:density}. The function $g\in L_0(\mu,\XX)$ obtained by glueing the functions $g_n$ satisfies
\begin{equation*}
\varphi(f)=\int_{\Omega} g(f)\, d\mu, \quad f\in\Simp_0(\mu_n,\XX).
\end{equation*}
By \ref{Dual:A}, $\norm{g}_{\LL_{\rho^*}(\XX^*)}\le \norm{\varphi}$. By density, $D(g)=\varphi$.

If $g\in L_0(\mu,\XX)$ is not supported on a given set $A\in\Sigma$, then, by \ref{Dual:A}, there is $f\in \Simp(\mu,\XX)$ such that $g(f)\in L_1(\mu)$, $\int_\Omega g(f)\, d\mu\not=0$, and $\supp_\mu(f)\subseteq \Omega\setminus A$. Consequently, $ \LL_{\rho*}(A,\XX^*)$ is the annihilator of $ \LL_{\rho}(\Omega\setminus A,\XX)$. This proves \ref{Dual:C}.
\end{proof}

Since $\rho^{**}=\rho$, the scalar-valued version of Theorem~\ref{thm:weakclose} gives that if both $\rho$ and $\rho^*$ are absolutely continuous, then $\LL_\rho$ is reflexive and, hence, has the Radon--Nikodym property. This gives the following.

\begin{theorem}\label{thm:dualLpLq}
Let $\rho_1$ and $\rho_2$ be function norms. Suppose that $\rho_1$, $\rho_2$ and $\rho_2^*$ are absolutely continuous. Then
\[
\enpar{\LL_{\rho_1} (\LL_{\rho_2})}^*=\LL_{\rho_1^*} (\LL_{\rho_2^*}).
\]
In particular, if $1<p<\infty$ and $1\le q<\infty$, the dual space of $L_q(L_p)$ is $L_{q'}(L_{p'})$ under the natural dual pairing.
\end{theorem}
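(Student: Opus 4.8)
The plan is to deduce the vector-valued duality $(\LL_{\rho_1}(\LL_{\rho_2}))^* = \LL_{\rho_1^*}(\LL_{\rho_2^*})$ directly from Theorem~\ref{thm:weakclose}\ref{Dual:C}, applied with outer function norm $\rho = \rho_1$ and coefficient space $\XX = \LL_{\rho_2}$. That theorem identifies $(\LL_{\rho_1}(\XX))^*$ with $\LL_{\rho_1^*}(\XX^*)$ as soon as two hypotheses hold: that $\rho_1$ is absolutely continuous, which is assumed, and that $\XX^* = (\LL_{\rho_2})^*$ enjoys the Radon--Nikodym property. So the whole argument reduces to verifying the latter and to identifying $(\LL_{\rho_2})^*$ with $\LL_{\rho_2^*}$.

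First I would settle the scalar layer. Applying the scalar-valued case of Theorem~\ref{thm:weakclose}\ref{Dual:C} (take $\XX = \FF$, whose dual trivially has the Radon--Nikodym property) to the absolutely continuous norm $\rho_2$ yields $(\LL_{\rho_2})^* = \LL_{\rho_2^*}$ isometrically via the natural pairing $D$. Since $\rho_2^*$ is also absolutely continuous and $\rho_2^{**} = \rho_2$, the same statement applied to $\rho_2^*$ gives $(\LL_{\rho_2^*})^* = \LL_{\rho_2}$. Composing these two identifications and checking that the resulting map is the canonical embedding of $\LL_{\rho_2}$ into its bidual shows that $\LL_{\rho_2}$ is reflexive --- this is precisely the remark recorded just before the statement. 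Because reflexive spaces have the Radon--Nikodym property and the dual of a reflexive space is reflexive, both $\LL_{\rho_2}$ and $(\LL_{\rho_2})^* = \LL_{\rho_2^*}$ have the Radon--Nikodym property.

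With this in hand, the main assertion follows in one stroke: Theorem~\ref{thm:weakclose}\ref{Dual:C} with $\rho = \rho_1$ and $\XX = \LL_{\rho_2}$ gives that $D$ is onto, so
\[
\enpar{\LL_{\rho_1}(\LL_{\rho_2})}^* = \LL_{\rho_1^*}\enpar{(\LL_{\rho_2})^*} = \LL_{\rho_1^*}(\LL_{\rho_2^*}).
\]
For the special case I would simply check that the hypotheses hold for $\rho_1 = \norm{\cdot}_q$ and $\rho_2 = \norm{\cdot}_p$: recalling that $\norm{\cdot}_r$ is absolutely continuous exactly when $r < \infty$, the condition $1 \le q < \infty$ makes $\rho_1$ absolutely continuous, while $1 < p < \infty$ makes both $\rho_2 = \norm{\cdot}_p$ and $\rho_2^* = \norm{\cdot}_{p'}$ absolutely continuous; since $\rho_1^* = \norm{\cdot}_{q'}$ and $\rho_2^* = \norm{\cdot}_{p'}$, the general formula specializes to $(L_q(L_p))^* = L_{q'}(L_{p'})$.

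I expect the only genuinely delicate point to be the reflexivity step, namely verifying that composing the two dualities $(\LL_{\rho_2})^* = \LL_{\rho_2^*}$ and $(\LL_{\rho_2^*})^* = \LL_{\rho_2}$ reproduces the canonical bidual map rather than merely exhibiting an isometry between $\LL_{\rho_2}$ and its bidual. Once the naturality of the pairing $D$ is tracked through, reflexivity --- and hence the Radon--Nikodym property of the dual --- is immediate, and everything else is a direct invocation of Theorem~\ref{thm:weakclose}.
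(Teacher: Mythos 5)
Your proof is correct and coincides with the paper's own argument: the paper deduces the theorem exactly as you do, from the remark that $\rho^{**}=\rho$ together with the scalar case of Theorem~\ref{thm:weakclose} makes $\LL_{\rho_2}$ reflexive (so that $(\LL_{\rho_2})^*=\LL_{\rho_2^*}$ has the Radon--Nikodym property), followed by a single application of Theorem~\ref{thm:weakclose}\ref{Dual:C} with $\rho=\rho_1$ and $\XX=\LL_{\rho_2}$. The only difference is that you spell out the naturality check identifying the composed dualities with the canonical bidual embedding, which the paper leaves implicit.
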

%--------------------------------------------
\subsection{Complementability in the bidual space}
%--------------------------------------------
Within the study of the geometry of Banach spaces, arguments that include taking limit points of bounded sequences often appear. As such arguments rely on the Banach-Alaoglu theorem, it is important to know whether a given Banach space is complemented in its bidual. Note that complemented subspaces inherit this property. Note also that dual spaces have it. Therefore, every complemented subspace of a dual space is complemented in its bidual space.

If $p\in(1,\infty]$, $L_p$ is the dual space of $L_{p'}$. For $p=1$, $L_p=L_1$ is a complemented subspace of the space $\Mt([0,1])$ consisting of all signed measures over $[0,1]$, which, in turn, is the dual of the space $\Cont([0,1])$ consisting of all continuous functions over $[0,1]$. Consequently, $L_p$ is complemented in its bidual for any $p\in[1,\infty]$. Let us record the behaviour of mixed-norm Lebesgue spaces in this regard. Oddly enough, they do not inherit the property of being complemented in their biduals from their components.

\begin{proposition}\label{prop:BDLq1}
Let $1<q\le \infty$. Then
\[
L_q(L_1) \trianglelefteq \enpar{L_{q'}(\Cont([0,1]))}^*.
\]
\end{proposition}

\begin{proof}
Clearly, $L_q(L_1) \trianglelefteq L_q(\Mt([0,1]))$. In turn, $L_q(\Mt([0,1]))$ is complemented in the dual space of $L_{q'}(\Cont([0,1]))$ (see \cite{Daher2013}).
\end{proof}

\begin{theorem}\label{thm:bidual}
Let $p$, $q\in[1,\infty]$, $p\not=q$. Then, $L_q(L_p)$ is complemented in its bidual if and only if $p<\infty$.
\end{theorem}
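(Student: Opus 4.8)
The plan is to prove the two implications separately, and for the sufficient direction to split according to whether the outer exponent $q$ is finite.

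I would first settle the whole range $1\le p,q<\infty$ by a single lattice-theoretic argument, which I expect to be the cleanest part. In this range $L_q(L_p)$ is a K\"othe function space, hence a Banach lattice, and by Corollary~\ref{cor:TCLqLp} it has finite cotype $\max\{p,q,2\}$. Since $c_0$ has no finite cotype and cotype passes to closed subspaces, $L_q(L_p)$ contains no isomorphic copy of $c_0$; a Banach lattice without a copy of $c_0$ is a KB-space, and a KB-space is a projection band in its bidual, hence complemented there. Running this argument first disposes at once of every case with $p,q<\infty$, including the delicate-looking ones with an $L_1$-factor such as $L_1(L_p)$ and $L_q(L_1)$.

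For $q=\infty$ there are two cases. When $1<p<\infty$ I would exhibit $L_\infty(L_p)$ as a dual space: Theorem~\ref{thm:dualLpLq} applied to the absolutely continuous norms $\norm{\cdot}_1$ and $\norm{\cdot}_{p'}$ (whose further dual $\norm{\cdot}_p$ is again absolutely continuous) gives $(L_1(L_{p'}))^*=L_\infty(L_p)$, and dual spaces are complemented in their biduals. The remaining case $L_\infty(L_1)$ is genuinely different, since $L_1$ is not a dual and $(L_\infty)^*$ lacks the Radon--Nikodym property, so neither the lattice argument nor the duality of Theorem~\ref{thm:dualLpLq} applies. Here I would start from the canonical norm-one band projection $\pi\colon L_1^{**}\to L_1$: the induced operator $\pi_\mu\colon L_\infty(L_1^{**})\to L_\infty(L_1)$ is a projection, so $L_\infty(L_1)\trianglelefteq L_\infty(L_1^{**})$, and it suffices to complement $L_\infty(L_1^{**})$ in its bidual. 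For an $L_\infty$-space this should follow from a coordinatewise argument: $L_\infty(\mu,\XX)$ embeds, via the injectivity of the scalar factor $L_\infty(\mu)$, as a complemented subspace of an $\ell_\infty$-sum $\ell_\infty(\Gamma,\XX)$, and $\ell_\infty(\Gamma,\XX)$ is complemented in its bidual by projecting coordinate by coordinate through the canonical projection of the dual space $\XX=L_1^{**}$ onto itself. The technical point to be checked is the complementation of $L_\infty(\mu,\XX)$ inside the $\ell_\infty$-sum, which rests on a vector-valued lifting.

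The necessary direction is where the main difficulty lies: assuming $p=\infty$ and $q<\infty$, I must show $L_q(L_\infty)$ is \emph{not} complemented in its bidual. I would not try to locate a complemented copy of $c_0$; although $L_q(L_\infty)$ does contain $c_0$ (spans of disjointly supported functions inside a single inner fibre, viewed as constants in the outer variable), these copies are uncomplemented for the same reason $c_0$ is uncomplemented in $L_\infty$, and the order continuity of the outer $L_q$-norm turns disjointly supported outer data into $\ell_q$- rather than $c_0$-behaviour. Moreover this route cannot succeed, since $\ell_\infty$ \emph{is} complemented in $L_q(L_\infty)$ (the outer average $F\mapsto \int_0^1 F(s,\cdot)\,ds$ is a projection onto the constant copy of $L_\infty\simeq\ell_\infty$). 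Instead I would argue that the failure of the Radon--Nikodym property for $(L_\infty)^*$ makes the bidual too large to retract onto $L_q(L_\infty)$: the canonical isometry $L_{q'}(L_1)\hookrightarrow(L_q(L_\infty))^*$ of Theorem~\ref{thm:weakclose} is far from surjective, and one can produce, from a bounded $(L_\infty)^*$-valued vector measure with no Bochner density (concretely, via an inner Banach-limit functional that annihilates the $c_0$-sequence above while detecting its weak${}^{*}$ limit), an element of $(L_q(L_\infty))^{**}$ that no bounded projection can carry back into $L_q(L_\infty)$. Converting this non-representability into the non-existence of \emph{every} projection, rather than merely ruling out the weak${}^{*}$-continuous ones, is the crux of the argument and the step I expect to require the most care.
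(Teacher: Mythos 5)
Your KB-space argument for the range $1\le p,q<\infty$ is correct and in fact cleaner than what the paper does there: the paper proves this range by exhibiting $L_q(L_p)$ as a dual space when $1<p,q<\infty$ and then invoking Emmanuele for $L_1(L_p)$ and a Daher-type argument for $L_q(L_1)$, whereas your chain (finite cotype by Corollary~\ref{cor:TCLqLp} $\Rightarrow$ no copy of $c_0$ $\Rightarrow$ KB-space $\Rightarrow$ projection band in the bidual) covers all of these cases uniformly and without external references. Your identification $(L_1(L_{p'}))^*=L_\infty(L_p)$ for $1<p<\infty$ is exactly the paper's argument. The first genuine gap is the case $L_\infty(L_1)$. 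The reduction $L_\infty(L_1)\trianglelefteq L_\infty(L_1^{**})$ is fine, but the key claim that $L_\infty(\mu,\XX)$ sits complementably inside some $\ell_\infty(\Gamma,\XX)$ ``via the injectivity of the scalar factor'' is unjustified, and no vector-valued lifting will fix it. Injectivity of $L_\infty(\mu)$ yields a norm-one projection of $\ell_\infty(\Gamma)$ onto $L_\infty(\mu)$, but an abstract projection between the scalar factors does not induce any operator between the vector-valued spaces: $\ell_\infty(\Gamma,\XX)$ is not a completed tensor product of $\ell_\infty(\Gamma)$ with $\XX$ (simple functions are not dense there), so there is nothing along which to extend $P\otimes\Id_\XX$. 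A lifting gives at best the isometric embedding, never the projection back onto the subspace of (classes of) strongly measurable functions, which is precisely the hard point. This is exactly where the paper resorts to an outside result: $L_q(L_1)\trianglelefteq L_q(M[0,1])$ and, by \cite{Daher2013}, $L_q(M[0,1])$ is complemented in the dual of $L_{q'}(\Cont([0,1]))$, settling all $1<q\le\infty$ at once.

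The second gap is the necessity direction ($p=\infty$, $q<\infty$), and here your proposal goes wrong twice. First, your reason for discarding the complemented-$c_0$ route is a non sequitur: the fact that $\ell_\infty$ embeds complementably in $L_q(L_\infty)$ (true, via the outer average) in no way prevents $c_0$ from also embedding complementably elsewhere --- $c_0\oplus\ell_\infty$ contains complemented copies of both. Second, the route you dismiss is precisely the paper's proof: since $c_0\sqsubseteq L_\infty$, the results of \cite{Emmanuele1996} and \cite{Daher2014} show that $L_q(\mu,\XX)$ with $c_0\sqsubseteq\XX$ and $1\le q<\infty$ contains a complemented copy of $c_0$, hence is not complemented in its bidual (the property passes to complemented subspaces, and $c_0$ fails it by Phillips' theorem). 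The complemented copies there are spanned by functions of the form $r_n\otimes e_n$ (Rademacher in the outer variable times the $c_0$-basis of the fibre); they are neither disjointly supported in the outer variable nor contained in a single fibre, so neither of your heuristic objections applies to them --- indeed $\left\lVert\sum_n a_n\, r_n\otimes e_n\right\rVert_{L_q(L_\infty)}=\sup_n\abs{a_n}$ because the inner norm is a sup, not an integral. Your substitute argument (a non-representable element of $(L_q(L_\infty))^{**}$ that ``no bounded projection can carry back'') is left unproved at exactly the step you yourself flag as the crux: non-representability only obstructs weak*-continuous projections, and you give no mechanism to exclude arbitrary bounded ones. As written, the ``only if'' half of the theorem is not established.
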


\begin{proof}
If $1<p<\infty$ and $1<q\le \infty$, then $L_q(L_p)$ is the dual space of $L_{q'}(L_{p'})$ by Theorem~\ref{thm:dualLpLq}. If $1<q\le \infty$ and $p=1$, the result follows from Proposition~\ref{prop:BDLq1}. In turn, if $1<p<\infty$ the result follows from \cite{Emmanuele1996}*{Theorem 4}.

Suppose that $1\le q<\infty$. Since $L_\infty$ contains a copy of $c_0$, \cite{Daher2014} and \cite{Emmanuele1996} give that $L_q(L_\infty)$ is not complemented in its bidual.
\end{proof}
%--------------------------------------------
\subsection{Function spaces versus sequence spaces}
%--------------------------------------------
The relation $\trianglelefteq$, as well as the relation $\sqsubseteq$, is reflexive and transitive.
{\Anso Despite it not being antisymmetric}
\cite{GowersMaurey1997}, the structure of complemented subspaces of a given Banach space is an essential feature of its geometry. As $L_q(L_p)$-spaces are concerned, the following result will be one of the ingredients of our proof of Theorem~\ref{thm:Main}. Before stating it, we define the order relation on pairs of indices mentioned in Section~\ref{sect:intro}.

\begin{definition}
Let $\alpha=(p,q)$, $\beta=(r,s)\in [1,\infty]^2$. We say that $\alpha\preceq \beta$ if there is $\nu\colon\NN\to\NN$ such that
\[
(\ell_q^n (\ell_p^n))_{n=1}^\infty \trianglelefteq (\ell_s^{\nu(n)} (\ell_r^{\nu(n)}))_{n=1}^\infty.
\]
We say that $\alpha\simeq\beta$ if $\alpha\preceq \beta$ and $\beta\preceq \alpha$.
\end{definition}

For further reference, we record a straightforward application of duality.

\begin{lemma}\label{lem:AnsoAuxDual}
Let $p$, $q$, $r$, $s\in[1,\infty]$ be such that $(p,q)\preceq (r,s)$. Then $(p',q')\preceq (r',s')$.
\end{lemma}

\begin{lemma}\label{lem:AnsoAux}
Let $p$, $q$, $r$, $s\in[1,\infty]$ and $\XX$ be an $\SL_r$-space. Suppose that $s<\infty$, or $s=r=\infty$ and $\XX=L_\infty$. If $(\ell_q^n (\ell_p^n))_{n=1}^\infty \trianglelefteq L_s(\XX)$, then $(p,q)\preceq (r,s)$.
\end{lemma}

\begin{proof}
Let $J_n\colon \ell_q^n (\ell_p^n) \to L_s(\XX)$ and $Q_n\colon L_s(\XX) \to \ell_q^n (\ell_p^n)$, $n\in\NN$, be bounded linear maps such that $Q_n\circ J_n=\Id_{\ell_q^n (\ell_p^n)}$ and
\[
\sup_n \norm{J_n} \norm{Q_n} <\infty.
\]
Let $(I_\lambda, P_\lambda, k_\lambda)_{\lambda\in\Lambda}$ be as in Lemma~\ref{lem:LqLpvxlqlp} with respect to the indices $r$ and $s$. By the small perturbation technique, there are $C\in[1,\infty)$ and
\[
J_n'\colon \ell_q^n (\ell_p^n) \to \cup_{\lambda\in\Lambda} I_\lambda(\ell_s^{k_\lambda}(\ell_r^{k_\lambda})) , \quad Q_n'\colon L_s(\XX) \to \ell_q^n (\ell_p^n), \quad n\in\NN,
\]
such that $Q'_n\circ J'_n=\Id_{\ell_q^n (\ell_p^n)}$ and $\norm{J_n'} \norm{Q_n'} \le C$ for all $n\in\NN$. Since $\ell_q^n (\ell_p^n)$ is finite dimensional, for each $n\in\NN$ there is $\lambda(n)\in\Lambda$ such that
\[
J_n'(\ell_q^n (\ell_p^n))\subseteq I_{\lambda(n)}\enpar{\ell_s^{k_{\lambda(n)}}(\ell_r^{k_{\lambda(n)}})}.
\]
The operators $P_{\lambda(n)}\circ J_n'$ and $ P_n' \circ I_{\lambda(n)} $, $n\in\NN$, witness that $(p,q)\preceq (r,s)$.
\end{proof}

\begin{proposition}\label{prop:motivation}
Given $p$, $q$, $r$, $s\in[1,\infty]$, we consider the following assertions.
\begin{enumerate}[label=(\alph*)]
\item\label{it:A} $L_q(L_p) \trianglelefteq L_s(L_r)$.
\item\label{it:A1} $Z_{p,q} \trianglelefteq Z_{r,s}$.
\item\label{it:A2} $B_{p,q} \trianglelefteq B_{r,s}$.
\item\label{it:B} $Z_{p,q} \trianglelefteq L_s(L_r)$.
\item\label{it:D} $(\ell_q^n (\ell_p^n))_{n=1}^\infty \trianglelefteq L_s(L_r)$.
\item\label{it:C} $(p,q)\preceq (r,s)$.
\end{enumerate}
Then,
\begin{enumerate}[label=(\roman*)]
\item\label{New:it:1} \ref{it:A} implies \ref{it:B}, \ref{it:B} implies \ref{it:D}, \ref{it:A1} implies \ref{it:D}, \ref{it:A2} implies \ref{it:D}, and \ref{it:D} implies \ref{it:C}.
\item If $\max\{r,s\}<\infty$, \ref{it:C} implies \ref{it:A} and $\max\{p,q\}<\infty$.
\end{enumerate}
\end{proposition}

\begin{proof}
$Z_{p,q} \trianglelefteq L_q(L_p)$ by Lemma~\ref{lem:AnsoNew},
and it is clear that
\[
(\ell_q^n (\ell_p^n))_{n=1}^\infty \trianglelefteq B_{p,q} \trianglelefteq Z_{p,q}.
\]
Hence, to complete the proof of \ref{New:it:1} it suffices to show that \ref{it:D} implies \ref{it:C}. In the case when $s<\infty$ or $s=r=\infty$, since $L_r$ is an $\SL_r$-space, the implication holds by Lemma~\ref{lem:AnsoAux}. Suppose that $s=\infty$ (or just that $s>1$) and $1\le r<\infty$. Set $\XX=L_{s'}(L_{r'})$ if $r>1$ and $\XX=L_{s'}(\Ct([0,1]))$ if $r=1$. By Theorem~\ref{thm:dualLpLq} and Proposition~\ref{prop:BDLq1}, dualizing we obtain
\[
(\ell_{q'}^n (\ell_{p'}^n))_{n=1}^\infty \trianglelefteq \XX^{**}.
\]
Consequently, by Proposition~\ref{prop:CPLR},
\[
(\ell_{q'}^n (\ell_{p'}^n))_{n=1}^\infty \trianglelefteq \XX.
\]
Since $\Ct([0,1])$ is an $\SL_\infty$-space, $(q',p')\preceq (r',s')$ by Lemma~\ref{lem:AnsoAux}. Hence, \ref{it:C} holds by Lemma~\ref{lem:AnsoAuxDual}.

Assume that \ref{it:C} holds and that $r$ and $s$ are finite. Set $\WW=L_q(L_p)$ if $\max\{p,q\}<\infty$ and $\WW=L_\infty$ otherwise. Use Theorem~\ref{thm:bidual} to pick a bounded linear projection $Q\colon \WW^{**} \to \WW$.

By Lemma~\ref{lem:LqLpvxlqlp}, there are a directed set $\Lambda$ and, for each $\lambda\in\Lambda$, $n_\lambda\in\NN$, a Banach space $\WW_\lambda$, and linear maps
$P_\lambda \colon \WW \to \WW_\lambda$ and $I_\lambda\colon \WW_\lambda\to \WW$ such that
\begin{itemize}
\item $\sup_\lambda \norm{I_\lambda} <\infty$,
\item $\sup_\lambda \norm{P_\lambda} <\infty$,
\item $\lim_{\lambda\in\Lambda} I_\lambda(P_\lambda(f))=f$ for all $f\in\WW$, and
\item $(\WW_\lambda)_{\lambda\in \Lambda} \trianglelefteq (\ell_s^{n_\lambda} (\ell_r^{n_\lambda}))_{\lambda\in \Lambda}$.
\end{itemize}
Let $\Ut$ be a nonprincipal ultrafilter over $\Lambda$. We have
\[
\XX:=\left( \oplus_{\lambda\in\Lambda} \WW_\lambda \right)_\Ut \trianglelefteq \YY:=\left( \oplus_{\lambda\in\Lambda} \ell_s^{n_\lambda} (\ell_r^{n_\lambda})\right)_\Ut.
\]
The maps
\[
f \mapsto (P_\lambda(f))_{\lambda\in\Lambda}, \quad (f_\lambda)_{\lambda\in\Lambda} \mapsto Q( \wstartop\lim_\Ut I_\lambda (f_\lambda))
\]
witness that $\WW \trianglelefteq \XX$. The ultrapower of $L_s(L_r)$ is isomorphic to a band (and thus a complemented subspace) of $L_s(\mu_1,L_r(\mu_2))$, over some probability spaces $\mu_1$ and $\mu_2$ (see \cite{LevyRaynaud1984}). Therefore, $\YY \trianglelefteq L_s(\mu_1,L_r(\mu_2))$. Consequently, $\WW \trianglelefteq L_s(\mu_1,L_r(\mu_2))$.

If $\WW=L_\infty$ we arise to contradiction. Hence, $p$ and $q$ are finite, and $\WW=L_q(L_p)$. Since $\WW$ is separable, there are separable measures $\nu_1$ and $\nu_2$ obtained as restrictions of $\mu_1$ and $\mu_2$, respectively, such that $\WW \trianglelefteq L_s(\nu_1,L_r(\nu_2))$. Since $L_s(\nu_1,L_r(\nu_2)) \trianglelefteq L_s(L_r)$, \ref{it:A} holds.
\end{proof}
%------------------------------------
\section{Unconditional basic sequences in \texorpdfstring{$L_q(L_p)$}{}}\label{sec:unconditional}\noindent
%------------------------------------
A sequence $(\xx_n)_{n=1}^{\infty}$ in a Banach space $\XX$ is a \emph{basic sequence} if it is a Schauder basis of its closed linear span $[\xx_n\colon n\in\NN]$. An \emph{unconditional basic sequence} modelled on an infinite countable set $\Nt$ is a family $(\xx_n)_{n\in\Nt}$ in $\XX$ such that $(\xx_{\pi(n)})_{n=1}^\infty$ is a basic sequence of every rearrangement $\pi$ of $\Nt$. A family $\XB=(\xx_n)_{n\in\Nt}$ in $\XX$ is an unconditional basic sequence if and only if there is a constant $K\in[1,\infty)$ such that
\[
\norm{\sum_{n\in\Nt} a_n \, \xx_n}\le K\norm{\sum_{n\in\Nt} b_n \, \xx_n}, \quad (b_n)_{n\in\Nt}\in c_{00},\, \abs{a_n}\le \abs{b_n},
\]
in which case we say that $\XB$ is $C$-unconditional (see \cite{AlbiacKalton2016}*{Proposition 3.1.3}).

Two families $(\xx_n)_{n\in\Nt}$ and $(\yy_m)_{m\in\Mt}$ in Banach spaces $\XX$ and $\YY$, respectively, are said to be \emph{permutatively $C$-equivalent}, $1\le C<\infty$, if there are a bijection $\pi\colon\Nt\to\Mt$ and an isomorphic embedding
\[
T\colon[\xx_n \colon n\in\Nt]\to \YY
\]
such that $\max\{\norm{T},\norm{T^{-1}}\}\le C$ and $T(\xx_n)=\yy_{\pi(n)}$ for all $n\in\Nt$. If $\XX=\YY$ and $T$ is the restriction of an automorphism $S$ of $\XX$, and $\max\{\norm{S},\norm{S^{-1}}\}\le C$, we say that $\XB$ and $\YB$ are \emph{permutatively $C$-congruent}. If $\Nt=\Mt$ and $\pi$ is the identity map we say that $\XB$ and $\YB$ are \emph{$C$-equivalent} or \emph{$C$-congruent}, respectively. If $\pi$ is the identity map and $T$ is just a bounded linear map with $\norm{T}\le C$, we say that $\XB$ \emph{$C$-dominates} $\YB$. If the constant $C$ is irrelevant we simply drop it from the notation.

A family in a Banach space is said to be $C$-complemented, $1\le C<\infty$, if its closed linear span is. If an unconditional basic sequence $(\xx_n)_{n\in\Nt}$ in a Banach space $\XX$ is complemented, then there are functionals $(\xx_n^*)_{n\in\Nt}$ in $\XX^*$ such that the map
\[
f\mapsto \sum_{n\in\Nt} \xx_n^*(f) \, \xx_n
\]
is a bounded linear projection from $\XX$ onto $[\xx_n \colon n\in\Nt]$. In that case, we say that $\XB^*$ are projecting functionals for $\XB$.

Let $(\Omega, \Sigma,\mu)$ be a measure space and $\XX$ be a Banach space. Given $f$, $g\in L_0(\mu,\XX)$,
the symbol $\norm{f}_\XX\le\norm{g}_\XX$ means that $\norm{f(\omega)}_\XX\le\norm{g(\omega)}_\XX$ $\mu$-a.\@e.\@ $\omega\in\Omega$.

\begin{lemma}\label{lem:RestrictOp}
Let $\rho$ be a function norm over a $\sigma$-finite measure space. Let $\XX$ be a Banach space. Let $\Psi=(\bpsi_n)_{n\in\Nt}$ be an unconditional basic sequence in $\LL_\rho(\XX)$. Let $\Phi=(\bphi_n)_{n=1}^\infty$ be a pairwise disjointly supported sequence in $\LL_\rho(\XX)$. Suppose that $\norm{\bphi_n}_\XX \le \norm{\bpsi_n}_\XX$ for all $n\in\NN$. Then $\Psi$ dominates $\Phi$.
\end{lemma}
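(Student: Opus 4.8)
The plan is to produce the single quantitative estimate
\[
\norm{\sum_n a_n \bphi_n}_{\LL_\rho(\XX)} \le K \norm{\sum_n a_n \bpsi_n}_{\LL_\rho(\XX)}
\]
for every finitely supported scalar family $(a_n)$, where $K$ is the unconditionality constant of $\Psi$. Since $\Psi$ is a basic sequence, the coefficients of a finite combination are unique, so this inequality makes the assignment $\bpsi_n\mapsto\bphi_n$ extend to a bounded operator on $[\bpsi_n\colon n\in\NN]$ of norm at most $K$; this is exactly the assertion that $\Psi$ $K$-dominates $\Phi$. Set $E_n=\supp_\mu(\bphi_n)$, which are pairwise disjoint.

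First I would exploit this disjointness to rewrite the left-hand side as a lattice norm. At almost every $\omega$ at most one of the vectors $\bphi_n(\omega)$ is nonzero, so
\[
\norm{\sum_n a_n \bphi_n(\omega)}_\XX = \sum_n \abs{a_n}\,\norm{\bphi_n(\omega)}_\XX .
\]
Because $\bphi_n$ vanishes off $E_n$ and $\norm{\bphi_n}_\XX\le\norm{\bpsi_n}_\XX$, the right-hand side is at most $\sum_n \abs{a_n}\chi_{E_n}(\omega)\norm{\bpsi_n(\omega)}_\XX$, which, the sets $E_n$ being disjoint, equals $\norm{\sum_n a_n \chi_{E_n}\bpsi_n(\omega)}_\XX$. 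Applying the monotone function norm $\rho$ gives
\[
\norm{\sum_n a_n \bphi_n}_{\LL_\rho(\XX)} \le \norm{\sum_n a_n \chi_{E_n}\bpsi_n}_{\LL_\rho(\XX)} .
\]

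The crux is then to recover $\sum_n a_n \chi_{E_n}\bpsi_n$ from $\sum_n a_n \bpsi_n$; note that a naive pointwise bound cannot do this, since the $\bpsi_n$ overlap and the pointwise sum of their norms may be far larger than the norm of their combination (think of Rademacher functions). This is precisely where unconditionality must enter, and I would bring it in through a random-sign average. For signs $\epsilon=(\epsilon_n)\in\{-1,1\}$ put $S_\epsilon=\sum_n \epsilon_n a_n \bpsi_n$ and $\eta_\epsilon=\sum_n \epsilon_n \chi_{E_n}$, so that $\abs{\eta_\epsilon}\le 1$. The disjointness of the $E_n$ and the orthogonality relation $\Ave_\epsilon \epsilon_m\epsilon_n=\delta_{m,n}$ yield the identity
\[
\Ave_\epsilon\; \eta_\epsilon\, S_\epsilon = \sum_n a_n \chi_{E_n}\bpsi_n .
\]
Taking norms, using convexity of $\norm{\cdot}$, then $\abs{\eta_\epsilon}\le 1$ together with the monotonicity of $\rho$, and finally the $K$-unconditionality of $\Psi$, I obtain $\norm{\sum_n a_n \chi_{E_n}\bpsi_n}_{\LL_\rho(\XX)} \le \Ave_\epsilon \norm{S_\epsilon}_{\LL_\rho(\XX)} \le K\norm{\sum_n a_n \bpsi_n}_{\LL_\rho(\XX)}$, which combined with the previous display finishes the proof.

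I expect the only genuinely delicate point to be the averaging identity and its justification; the two reductions that sandwich it are routine once the disjoint supports are used to turn $\XX$-norms of sums into sums of $\XX$-norms. Everything is finite, since only finitely many $a_n$ are nonzero, so no measurability or convergence issues arise beyond the strong measurability already built into $\LL_\rho(\XX)$.
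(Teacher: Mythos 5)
Your proof is correct and takes essentially the same route as the paper: both exploit the disjointness of the supports to reduce to a pointwise comparison involving the $\bpsi_n$, and then recover that expression from $\sum_n a_n\bpsi_n$ by a Rademacher-sign average combined with the $K$-unconditionality of $\Psi$. The only cosmetic difference is that the paper bounds the scalar function $\sup_n\abs{a_n}\norm{\bpsi_n}_\XX$ pointwise by $\Ave_{\varepsilon=\pm1}\norm{\sum_n\varepsilon_n a_n\bpsi_n}_\XX$ and then applies monotonicity and subadditivity of $\rho$, whereas you achieve the same effect through the exact averaging identity with the multipliers $\eta_\epsilon$.
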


\begin{proof}
Suppose that $\Psi$ is $K$-unconditional. For $(a_n)_{n\in\Nt} \in c_{00}(\Nt)$ we have
\begin{align*}
\norm{\sum_{n\in\Nt} a_n \bphi_n}_{\LL_\rho(\XX)}
&=\norm{\sup_{n\in\Nt} \abs{a_n} \norm{\bphi_n}_\XX}_{\LL_\rho}
\le\norm{\sup_{n\in\Nt} \abs{a_n} \norm{\bpsi_n}_\XX}_{\LL_\rho}\\
&\le \norm{\Ave_{\varepsilon=\pm 1}\norm{\sum_{n\in\Nt} \varepsilon_n a_n \bpsi_n}_\XX}_{\LL_\rho}\\
&\le \Ave_{\varepsilon=\pm 1} \norm{\sum_{n\in\Nt} \varepsilon_n a_n \bpsi_n}_{\LL_\rho(\XX)}
\le K \norm{\sum_{n\in\Nt} a_n \bpsi_n}_{\LL_\rho(\XX)}.\qedhere
\end{align*}
\end{proof}

\begin{lemma}[cf.\ \cite{AnsorenaBello2025}*{Lemma~4.6}]\label{lem:NewOperators}
Let $\rho$ be an absolutely continuous function norm over a $\sigma$-finite measure space. Let $\XX$ be a Banach space such that $\XX^*$ has the Radon--Nikodym property. Let $\Psi=(\bpsi_n)_{n\in\Nt}$ be a complemented unconditional basis sequence in $\LL_\rho(\XX)$ with projecting functionals $\Psi^*=(\bpsi_n^*)_{n\in\Nt}$ regarded as functions in $\LL_{\rho^*}(\XX^*)$. Let $\Phi=(\bphi_n)_{n\in\Nt}$ be a pairwise disjointly supported sequence in $\LL_\rho(\XX)$ and $(\bphi_n^*)_{n\in\Nt}$ be a pairwise disjointly supported sequence in $\LL_{\rho^*}(\XX^*)$. Suppose that
\begin{itemize}
\item $\norm{\bphi_n}_\XX \le \norm{\bpsi_n}_\XX$ and $\norm{\bphi_n^*}_{\XX^*} \le \norm{\bpsi_n^*}_{\XX^*}$ for all $n\in\Nt$, and
\item $\inf_{n\in\Nt} \abs{ \bphi_n^*( \bphi_n)}>0$.
\end{itemize}
Then $\Phi$ is a complemented basic sequence equivalent to $\Psi$. Moreover, there are scalars $(\lambda_n)_{n\in\Nt}$ such that $(\lambda_n \bphi_n^*)_{n\in\Nt}$ is a sequence of projecting functionals for $\Phi$.
\end{lemma}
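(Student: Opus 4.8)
The plan is to transfer the unconditional, complemented structure of $\Psi$ to $\Phi$ by manufacturing an explicit projection out of the functionals $(\bphi_n^*)$, the two disjointly supported systems being glued together by a duality estimate. First I would record the ambient facts. Since $\rho$ is absolutely continuous and $\XX^*$ has the Radon--Nikodym property, part~\ref{Dual:C} of Theorem~\ref{thm:weakclose} identifies $(\LL_\rho(\XX))^*$ isometrically with $\LL_{\rho^*}(\XX^*)$, so that $\Psi^*$ and $\Phi^*$ are genuine dual functionals. Let $K$ be the unconditionality constant of $\Psi$ and let $P=\sum_{n}\bpsi_n^*(\cdot)\,\bpsi_n$ be the bounded projection of $\LL_\rho(\XX)$ onto $[\bpsi_n\colon n\in\Nt]$. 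A standard adjoint computation, transporting the unconditional sign-multipliers of $\Psi$ through $P^*$, shows that $\Psi^*$ is an unconditional basic sequence in $\LL_{\rho^*}(\XX^*)$ with constant controlled by $K\norm{P}$. Applying Lemma~\ref{lem:RestrictOp} once in $\LL_\rho(\XX)$ to $(\Psi,\Phi)$ and once in $\LL_{\rho^*}(\XX^*)$ to $(\Psi^*,\Phi^*)$ then yields that $\Psi$ dominates $\Phi$ and that $\Psi^*$ dominates $\Phi^*$.

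The disjointness of the supports is used to guarantee $\bphi_m^*(\bphi_n)=0$ for $m\neq n$; together with $\inf_n\abs{\bphi_n^*(\bphi_n)}>0$ this lets me set $\lambda_n=1/\bphi_n^*(\bphi_n)$, so that $(\lambda_n)$ is bounded and $(\lambda_n\bphi_n^*)$ is biorthogonal to $\Phi$. The heart of the argument is to prove that the transfer operator
\[
\tilde R(f)=\sum_{n\in\Nt}\lambda_n\bphi_n^*(f)\,\bpsi_n
\]
is a bounded map from $\LL_\rho(\XX)$ into $[\bpsi_n\colon n\in\Nt]$. I would control its finite partial sums by duality: for a norm-one $h^*\in\LL_{\rho^*}(\XX^*)$ with coefficients $d_n=h^*(\bpsi_n)$, the scalar $\sum_n\lambda_n\bphi_n^*(f)\,d_n$ equals the pairing of $f$ with the disjointly supported functional $\sum_n\lambda_n d_n\bphi_n^*$, whose norm is estimated by combining three facts: the dual domination ($\Psi^*$ dominates $\Phi^*$), the unconditionality of $\Psi^*$ absorbing the uniformly bounded factors $\lambda_n$, and the identity $\sum_n d_n\bpsi_n^*=P^*h^*$ with $\norm{P^*h^*}\le\norm{P}$. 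This gives a uniform bound $\norm{\tilde R_N(f)}\le C\norm{f}$ for every partial sum, and the same bound with arbitrary signs, so that $\sum_n\lambda_n\bphi_n^*(f)\,\bpsi_n$ is weakly unconditionally Cauchy.

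The main obstacle is precisely upgrading these uniformly bounded partial sums to a convergent series, and this is exactly where the hypothesis $c_0\not\sqsubseteq\LL_\rho(\XX)$ is needed: by the Bessaga--Pe{\l}czy\'{n}ski theorem, in a space not containing $c_0$ every weakly unconditionally Cauchy series converges, so $\tilde R(f)$ is well defined and $\tilde R$ is bounded.

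Once $\tilde R$ is in hand I would close the argument quickly. By Lemma~\ref{lem:RestrictOp} the map $S\colon[\bpsi_n]\to[\bphi_n]$, $\bpsi_n\mapsto\bphi_n$, is bounded, while biorthogonality gives $\tilde R(\sum a_n\bphi_n)=\sum a_n\bpsi_n$, so the restriction of $\tilde R$ to $[\bphi_n]$ is a bounded two-sided inverse of $S$; hence $\Phi$ is a basic sequence equivalent to $\Psi$. Finally $T=S\circ\tilde R=\sum_n\lambda_n\bphi_n^*(\cdot)\,\bphi_n$ is a bounded operator that equals the identity on $[\bphi_n]$ and maps into $[\bphi_n]$, i.e.\ a bounded projection onto $[\bphi_n]$. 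This shows that $\Phi$ is complemented and that $(\lambda_n\bphi_n^*)$ is a sequence of projecting functionals for it.
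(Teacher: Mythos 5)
Your argument is correct and is essentially the paper's own proof: both apply Lemma~\ref{lem:RestrictOp} in $\LL_{\rho^*}(\XX^*)$ (after noting, via Theorem~\ref{thm:weakclose}, that $\Psi^*$ is an unconditional basic sequence there) to bound by duality the partial sums of the transfer operator $f\mapsto\sum_n\lambda_n\bphi_n^*(f)\,\bpsi_n$, use $c_0\not\sqsubseteq\LL_\rho(\XX)$ to pass from uniformly bounded partial sums to a convergent series (the paper via bounded completeness of $\Psi$, you via the Bessaga--Pe{\l}czy\'{n}ski theorem on weakly unconditionally Cauchy series, which are interchangeable here), apply Lemma~\ref{lem:RestrictOp} again to obtain the bounded map $\bpsi_n\mapsto\bphi_n$, and compose the two operators to produce the projection $\sum_n\lambda_n\bphi_n^*(\cdot)\,\bphi_n$. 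The remaining differences are cosmetic: your $\lambda_n$ is the reciprocal of the paper's, and your intermediate map is defined on $[\bpsi_n\colon n\in\Nt]$ rather than on all of $\LL_\rho(\XX)$.
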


\begin{proof}
Assume without loss of generality that $\Nt=\NN$ and set $\lambda_n=\ \bphi_n^*(\bphi_n)$ for all $n\in\NN$. By Lemma~\ref{lem:RestrictOp}, the operators $U_m\colon \LL_{\rho*}(\XX^*)\to \LL_{\rho*}(\XX^*)$ given for each $m\in\NN$ by
\[
U_m(f^*) = \sum_{n=1}^m \frac{1}{\lambda_n} f^*(\bpsi_n) \, \bphi^*_n
\]
are uniformly bounded. For each $m\in\NN$, $U_m$ is the dual operator of the operator $S_m\colon\LL_\rho(\XX)\to\LL_\rho(\XX)$ given by
\[
S_m(f) = \sum_{n=1}^m \frac{1}{\lambda_n} \bphi^*_n(f) \, \bpsi_n.
\]

{\Anso Another application of Lemma~\ref{lem:RestrictOp} gives a uniformly bounded sequence of operators $R_m\colon\LL_\rho(\XX)\to\LL_\rho(\XX)$, $m\in\NN$, given by
\[
R_m(f) = \sum_{n=1}^m \bpsi^*_n(f) \, \bphi_n.
\]
We have $S_m(\bphi_n)=\bpsi_n$ and $R_m(\bpsi_n)=\bphi_n$ for all $n\in\NN\cap[1,m]$. On the one hand, this implies that $(\bphi_n)_{n=1}^m$ and $(\bpsi_n)_{n=1}^m$ are uniformly equivalent,
which means that $\Phi$ and $\Psi$ are equivalent.
On the other hand, the uniformly bounded family of operators $(T_m)_{m=1}^\infty$ defined by $T_m:=R_m\circ S_m$
satisfies
\[
T_m(f) = \sum_{n=1}^m \frac{1}{\lambda_n} \bphi^*_n(f) \, \bphi_n, \quad f\in\XX, \, m\in\NN.
\]
Since $\rho$ is absolutely continuous, then $\Phi$ is boundedly complete. Thus, $(T_m)_{m=1}^\infty$ converges in the strong operator topology to an operator $T$}.
Since $T$ is a projection onto the closed subspace of $\LL_\rho(\XX)$ spanned by $\Phi$, we are done.
\end{proof}

Many results on the symmetric basic sequence structure of Banach spaces use that symmetric basic sequences are subsymmetric, so passing to subsequences we do not lose information. When dealing with non-symmetric basic sequences $(\xx_n)_{n\in\Nt}$, such that the unit vector system of $\ell_q(\ell_p)$, $p\not=q$, this technique breaks down. Still, we can obtain valuable information passing to subsequences as long as we can ensure that, given a partition $(\Nt_i)_{i\in \It}$ of $\Nt$ into sets of infinite cardinality, the subsequence we extract contains infinitely many indices belonging to each subset $\Nt_i$. The following lemma substantiates this idea.

\begin{lemma}\label{lem:Embedding}
Let $\rho$ be a function quasi-norm over a finite measure space $(\Omega,\Sigma,\mu)$. Let $\XX$ be a Banach space. Let $\Psi=(\bpsi_{i,j})_{(i,j)\in\It\times\Nt}$ be a semi-normalized family in $\LL_\rho(\XX)$, where $\It$ is countable and $\Nt$ is infinite. Suppose that
\[
\inf_{j\in\Nt}\norm{\bpsi_{i,j}}_{L_1(\XX)}=0
\]
for all $i\in\It$. Let $(\varepsilon_n)_{n=1}^\infty$ be a sequence in $(0,\infty)$.
Then, there is a one-to-one map $\alpha\colon \NN\to \It\times\Nt$ and a sequence $(A_n)_{n=1}^\infty$ in $\Sigma$ satisfying the following properties.
\begin{enumerate}[label=(\alph*)]
\item In general, we can choose $(A_n)_{n=1}^\infty$ to be non-increasing. In the particular case when $\rho$ is absolutely continuous, we can choose $(A_n)_{n=1}^\infty$ to be pairwise disjoint.
\item $\norm{\bpsi_{\alpha(n)} - \bpsi_{\alpha(n)} \chi_{A_n}}_{\LL_\rho(\XX)}\le \varepsilon_n$ for all $n\in\NN$.
\item $ \mu(A_n)\le \varepsilon_n$ for all $n\in\NN$.
\item $ (\{i\}\times \Nt)\cap \alpha(\NN)$ is infinite for all $i\in\It$.
\end{enumerate}
\end{lemma}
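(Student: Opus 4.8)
The plan is to select the indices $\alpha(n)$ one at a time, each time choosing a member of the family whose $L_1(\XX)$-norm is very small, and to read off the sets $A_n$ from the super-level sets of the scalar functions $g_n:=\norm{\bpsi_{\alpha(n)}}_\XX$. Two elementary facts drive the construction. First, since $\mu$ is finite we have $\rho(\chi_\Omega)<\infty$, so for $g\in L_0^+(\mu)$ and $t>0$ the lattice property gives $\rho(g\chi_{\{g\le t\}})\le t\,\rho(\chi_\Omega)$, while Chebyshev's inequality gives $\mu(\{g>t\})\le \norm{g}_{L_1}/t$. Thus a function of small $L_1$-norm concentrates, up to arbitrarily small $\rho$-norm, on a set of arbitrarily small measure. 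Second, because $\Psi$ is semi-normalized each $\bpsi_{i,j}$ is nonzero, hence $\norm{\bpsi_{i,j}}_{L_1(\XX)}>0$; combined with $\inf_j\norm{\bpsi_{i,j}}_{L_1(\XX)}=0$ this forces, for every $i\in\It$ and every $\delta>0$, the existence of infinitely many $j\in\Nt$ with $\norm{\bpsi_{i,j}}_{L_1(\XX)}<\delta$.

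First I would fix once and for all a schedule $(i_n)_{n=1}^\infty$ in $\It$ in which every element of the countable set $\It$ occurs infinitely often. Proceeding by induction on $n$, having chosen $\alpha(1),\dots,\alpha(n-1)$, I set $\alpha(n)=(i_n,j_n)$ where $j_n$ is chosen fresh (distinct from the finitely many indices previously used in row $i_n$) and with $\norm{\bpsi_{i_n,j_n}}_{L_1(\XX)}$ below a threshold to be specified; the second fact above guarantees such a $j_n$ exists, and this makes $\alpha$ one-to-one and forces property (d). Writing $g_n=\norm{\bpsi_{\alpha(n)}}_\XX$ and fixing $t_n$ with $t_n\rho(\chi_\Omega)$ as small as needed for (b), I put $A_n^0=\{g_n>t_n\}$; the two estimates above give $\rho(\bpsi_{\alpha(n)}\chi_{\Omega\setminus A_n^0})=\rho(g_n\chi_{\{g_n\le t_n\}})\le t_n\rho(\chi_\Omega)$ and allow $\mu(A_n^0)$ to be pushed below any prescribed bound by taking $\norm{g_n}_{L_1}$ small enough.

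It remains to turn the $A_n^0$ into the required non-increasing, resp.\ pairwise disjoint, family. For the general statement I set $A_n=\bigcup_{k\ge n}A_k^0$, which is automatically non-increasing and contains $A_n^0$, so (b) holds; choosing the caps on $\mu(A_k^0)$ to be a sequence $\eta_k>0$ with $\sum_{k\ge n}\eta_k\le\varepsilon_n$ for all $n$ (for instance, arrange the tails to equal $m_n/(n+1)$ with $m_n=\min\{\varepsilon_1,\dots,\varepsilon_n\}$) then yields (c). When $\rho$ is absolutely continuous I instead set $A_n=A_n^0\setminus\bigcup_{k>n}A_k^0$, which is pairwise disjoint and still contained in $A_n^0$, so (c) is clear; for (b) I use $\Omega\setminus A_n=(\Omega\setminus A_n^0)\cup\bigcup_{k>n}A_k^0$ and the quasi-triangle inequality to bound $\rho(g_n\chi_{\Omega\setminus A_n})$ by a constant times $\rho(g_n\chi_{\Omega\setminus A_n^0})+\rho\big(g_n\chi_{\bigcup_{k>n}A_k^0}\big)$.

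The hard part will be controlling this last overlap term $\rho\big(g_n\chi_{\bigcup_{k>n}A_k^0}\big)$ in the disjoint case. Here absolute continuity of $\rho$ enters: for the fixed function $g_n$ there is $\delta_n>0$ such that $\rho(g_n\chi_E)$ is as small as desired whenever $\mu(E)<\delta_n$. The bookkeeping is then arranged so that the total measure of all later super-level sets beats this modulus, i.e.\ $\sum_{k>n}\mu(A_k^0)<\delta_n$ for every $n$. This is achieved adaptively: the modulus $\delta_n$ is determined as soon as $\alpha(n)$ is fixed, so at stage $k$ one simply demands $\mu(A_k^0)<2^{-k}\min\{\delta_1,\dots,\delta_{k-1},\varepsilon_k\}$, which is legitimate because $\mu(A_k^0)$ can be made arbitrarily small at stage $k$, and which forces $\sum_{k>n}\mu(A_k^0)<\delta_n 2^{-n}<\delta_n$. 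Splitting the constants evenly between the two summands then gives (b), completing the disjoint case. The only points requiring care beyond this are the harmless quasi-norm constant of $\rho$ and the verification that all the smallness demands imposed on a single $\norm{g_k}_{L_1}$ (for (b), (c), and the overlap control) can be met simultaneously, which they can since each only asks $\norm{g_k}_{L_1}$ to lie below some positive quantity.
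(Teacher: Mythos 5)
Your proof is correct, and at the strategic level it is the same as the paper's: a schedule visiting every row of $\It$ infinitely often, greedy selection of indices whose $L_1(\XX)$-norms are small, concentration of each selected function on a set of small measure, tail unions for the non-increasing case, and absolute continuity of $\rho$ to disjointify. The differences are in the mechanics, and both of your substitutions work. For the concentration step, the paper extracts subsequences of $(\norm{\bpsi_{i,j}}_\XX)_j$ converging almost uniformly (an Egorov-type argument) and records pairs $(j,B)$ with $B$ the exceptional set; you instead apply Chebyshev to the super-level set $A_n^0=\{g_n>t_n\}$ together with the bound $\rho(g_n\chi_{\{g_n\le t_n\}})\le t_n\rho(\chi_\Omega)$, which is more direct (both arguments use $\rho(\chi_\Omega)<\infty$, part of the definition of a function quasi-norm over a finite measure space). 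For the disjoint case, the paper first builds the nested sets and then passes to a further subsequence $\beta$, still hitting every row infinitely often, chosen so that $\rho\bigl(g_{\alpha(\beta(n))}\chi_{A_{\beta(n+1)}}\bigr)\le\varepsilon_n/2$, and finally takes differences $A_{\beta(n)}\setminus A_{\beta(n+1)}$; you instead keep every selected index and push the constraint forward in time: the modulus $\delta_n$ of absolute continuity at $g_n$ is fixed as soon as $\alpha(n)$ is, and all later super-level sets are required to have measure below $2^{-k}\min\{\delta_1,\dots,\delta_{k-1},\varepsilon_k\}$, so that $\mu\bigl(\bigcup_{k>n}A_k^0\bigr)<\delta_n$. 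Your adaptive bookkeeping buys you a proof with no second extraction (the same map $\alpha$ serves both cases), at the cost of forward-looking constraints in the induction; the paper's two-stage scheme keeps each stage's requirements static but produces the disjoint family only along a subsequence. Your treatment of the quasi-norm constant, splitting $\varepsilon_n$ between the two pieces of $\Omega\setminus A_n=(\Omega\setminus A_n^0)\cup\bigcup_{k>n}A_k^0$, is also correct, and in fact slightly more careful than the paper, whose final estimate in the disjoint case silently absorbs this constant.
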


\begin{proof}
Pick $\pi\colon\NN\to\It$ such that $\pi^{-1}(i)$ is infinite for all $i\in \It$. Let $(\delta_k)_{k=1}^\infty$ in $(0,\infty)$ be such that
\[
\sum_{k=n}^\infty \delta_k \le \varepsilon_n, \quad n\in\NN.
\]
Fix $i\in \NN$. Since a subsequence of $(\norm{\bpsi_{i,j}}_\XX)_{j\in\Nt}$ converges to zero in $L_1$, so in measure, a further subsequence converges to zero almost uniformly. Hence, given $\delta\in(0,\infty)$, the projection on the first coordinate of the set $J(i,\delta)$ consisting of all pairs $(j,B)\in\Nt\times\Sigma$ such that
\begin{itemize}
\item $\rho(\chi_{\Omega\setminus B})\norm{ \norm{\bpsi_{i,j}}_\XX -\norm{\bpsi_{i,j}}_\XX \chi_B}_\infty\le \delta$, and
\item $\mu(B)\le\delta$
\end{itemize}
is infinite. Note that $(j,B)\in J(i,\varepsilon)$ implies $ \norm{\bpsi_{i,j} -\bpsi_{i,j} \chi_A}_{\LL_\rho(\XX)}\le\delta$ for all $A\in\Sigma$ with $B\subseteq A$. We recursively construct $(i_n,j_n,B_n)_{n=1}^\infty$ such that $i_n=\pi(n)$, $(j_n,B_n)\in J(i_n, \delta_n)$, and $j_n>j_{n-1}$. If we set
\[
\alpha(n)=(\alpha_1(n),\alpha_2(n))=(i_n,j_n), \quad A_n=\cup_{k=n}^\infty B_k, \quad n\in\NN,
\]
then we have $\mu(A_n)\le \varepsilon_n$ and $\norm{\bpsi_{\alpha(n)} - \bpsi_{\alpha(n)} \chi_{A_n}}_{\LL_\rho(\XX)}\le \varepsilon_n/2$ for all $m$, $n\in\NN$ with $n\le m$. Since $(A_n)_{n=1}^\infty$ is non increasing, the general assertion holds.

If $\rho$ is absolutely continuous, we can recursively construct an increasing sequence $\beta\colon\NN\to \NN$ such that $\alpha_1(\beta(n))=\pi(n)$ and
\[
\norm{\bpsi_{\alpha(\beta(n))} \chi_{A_{\beta(n+1)}}}_{\LL_\rho(\XX)}\le \frac{\varepsilon_n}{2}, \quad n\in\NN.
\]
Set $D_n=A_{\beta(n)} \setminus A_{\beta(n+1)}$ for all $n\in\NN$. We have $\mu (D_n) \le \varepsilon_n$ and
\[
\norm{\bpsi_{\alpha(\beta(n))} - \bpsi_{\alpha(\beta(n))} \chi_{D_n}}_{\LL_\rho(\XX)}\le \varepsilon_n.
\]
Hence the function $\alpha\circ\beta$ and the sequence $(D_n)_{n=1}^\infty$ witness that the particular assertion holds.
\end{proof}

Let $\EB_{\It}=(\ee_n)_{n\in\It}$ denote the unit vector system on a countable set $\It$. Suppose that a function norm $\lambda$ over $\It$ satisfies $\lambda(\ee_n)=1$ for all $n\in\It$. Then, we say that $\LL_\lambda$ is a \emph{sequence space}. If $\EB_{\It}$ spans the whole space $\LL_\lambda$, so that it is an unconditional basis of the space, we say the $\LL_\lambda$ is \emph{minimal}. It is known that $\LL_\lambda$ is minimal if and only if $\lambda$ is absolutely continuous (see \cite{BennettSharpley1988}*{Theorem 3.13}). So, the dual space of a minimal sequence space is a sequence space. If $\LL_\lambda$ is separable, then $\LL_\lambda$ is minimal (see, e.g., \cite{AnsorenaBello2025}*{Theorem 2.5}).

The following lemma is an easy consequence of the unit vector system of $\ell_p$, $1\le p\le \infty$, being perfectly homogeneous (see \cite{BP1958}).

\begin{lemma}\label{lem:VecPel}
Let $\YY$ be a Banach space, $\UU$ be a minimal sequence space over a countable set $\It$, and $1\le p<\infty$. For each $i\in\It$, let $L_i\colon \ell_{p}\to \UU(\ell_{p})$ and $R_i\colon \UU(\ell_{p})\to \ell_{p}$ denote the canonical $i$th lifting and the canonical $i$th projection, respectively. Let $S\colon \UU(\ell_p) \to \YY$ and $P\colon\YY\to\UU(\ell_p)$ be such that $P\circ S=\Id_{\UU(\ell_p)}$. For each $i\in\It$, let $\XB_i=(\xx_{i,n})_{n=1}^\infty$ and $\XB_i^*=(\xx_{i,n}^*)_{n=1}^\infty$ be disjointly supported sequences in $c_{00}$ such that $\supp(\xx_{i,n})=\supp(\xx_{i,n}^*)$ and
\[
\norm{\xx_{i,n}}_{p}=\norm{\xx_{i,n}^*}_{p'}=\xx_{i,n}^*(\xx_{i,n})=1
\]
for all $n\in\NN$. Then there are maps $S_0\colon \UU(\ell_p) \to \YY$ and $P_0\colon\YY\to\UU(\ell_p)$ such that $P_0\circ S_0=\Id_{\UU(\ell_p)}$, and
\[
S_0(\ee_{i,n})=S(L_i(\xx_{i,n})), \quad P_0^*(\ee_{i,n})=P^*(R_i^*(\xx_{i,n}^*))
\]
for all $i\in\It$ and $n\in\NN$.
\end{lemma}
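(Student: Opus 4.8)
The plan is to reduce everything to a coordinatewise construction, handling each copy of $\ell_p$ indexed by $i\in\It$ separately and then reassembling over $\UU$. The only analytic input is the isometric rigidity of disjointly supported normalized blocks in $\ell_p$; the rest is bookkeeping with the liftings $L_i$ and projections $R_i$.

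First I would exploit the perfect homogeneity of the unit vector system of $\ell_p$. For each $i\in\It$, the sequence $(\xx_{i,n})_{n=1}^\infty$ is disjointly supported and normalized in $\ell_p$, so disjointness of supports gives $\norm{\sum_n a_n\xx_{i,n}}_p^p=\sum_n\abs{a_n}^p$ for every $(a_n)\in c_{00}$; hence the assignment $\ee_n\mapsto\xx_{i,n}$ extends to an isometric embedding $T_i\colon\ell_p\to\ell_p$, where $(\ee_n)_{n=1}^\infty$ is the unit vector basis of $\ell_p$. On the dual side I would use the functionals to define $P_i\colon\ell_p\to\ell_p$ by $P_i(x)=\sum_n\xx_{i,n}^*(x)\,\ee_n$. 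The normalization $\norm{\xx_{i,n}^*}_{p'}=1$ together with disjointness of the supports $\supp(\xx_{i,n}^*)$ yields $\norm{P_i}\le1$ (by Hölder on each block and summing), while the biorthogonality $\xx_{i,n}^*(\xx_{i,m})=\delta_{n,m}$ (from disjoint supports and $\xx_{i,n}^*(\xx_{i,n})=1$) gives $P_i\circ T_i=\Id_{\ell_p}$.

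Next I would lift these maps to the vector-valued space. Define $A,B\colon\UU(\ell_p)\to\UU(\ell_p)$ coordinatewise by $(Af)_i=T_i(f_i)$ and $(Bf)_i=P_i(f_i)$. Since $\norm{(Af)_i}_p=\norm{f_i}_p$ and $\norm{(Bf)_i}_p\le\norm{f_i}_p$ for every $i$, monotonicity of the function norm defining $\UU$ shows that $A$ is an isometry and $B$ a contraction, and $B\circ A=\Id_{\UU(\ell_p)}$ coordinatewise. Setting $S_0=S\circ A$ and $P_0=B\circ P$ then gives $P_0\circ S_0=B\circ P\circ S\circ A=B\circ A=\Id_{\UU(\ell_p)}$. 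Because $A(L_i(\ee_n))=L_i(T_i(\ee_n))=L_i(\xx_{i,n})$ and $\ee_{i,n}=L_i(\ee_n)$, the identity $S_0(\ee_{i,n})=S(L_i(\xx_{i,n}))$ is immediate.

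The remaining point, and the one requiring the most care, is the dual identity. Regarding $\ee_{i,n}$ as the coordinate functional $R_i^*(\ee_n^*)\in\enpar{\UU(\ell_p)}^*$, I would compute $B^*$ on this functional directly, avoiding any concrete description of $\enpar{\UU(\ell_p)}^*$ (convenient since for $p=1$ the duality of Theorem~\ref{thm:dualLpLq} is unavailable). Using $P_i^*(\ee_n^*)=\xx_{i,n}^*$, for every $f\in\UU(\ell_p)$ one has
\begin{multline*}
\langle B^*(R_i^*(\ee_n^*)),f\rangle=\langle \ee_n^*,(Bf)_i\rangle=\langle \ee_n^*,P_i(f_i)\rangle\\
=\langle P_i^*(\ee_n^*),f_i\rangle=\langle \xx_{i,n}^*,R_i(f)\rangle=\langle R_i^*(\xx_{i,n}^*),f\rangle,
\end{multline*}
so that $B^*(\ee_{i,n})=R_i^*(\xx_{i,n}^*)$ and therefore $P_0^*(\ee_{i,n})=P^*(B^*(\ee_{i,n}))=P^*(R_i^*(\xx_{i,n}^*))$, as required. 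The hard part is purely organizational: tracking the adjoints of $L_i$ and $R_i$ correctly. There are no convergence issues because the computation localizes to a single coordinate, and all analytic content has already been absorbed into the disjoint-block estimate for $T_i$ and $P_i$.
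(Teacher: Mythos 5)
Your proof is correct and takes essentially the same route as the paper: blockwise contractions on each copy of $\ell_p$ (your $T_i$ and $P_i$ are exactly the maps the paper obtains by citing Pe{\l}czy\'{n}ski's lemma on disjointly supported normalized blocks, which you instead verify by hand via disjointness and H\"older), lifted coordinatewise to $\UU(\ell_p)$ and composed with $S$ and $P$. Your explicit computation of the adjoint identity $P_0^*(\ee_{i,n})=P^*(R_i^*(\xx_{i,n}^*))$ merely fills in a verification the paper leaves implicit.
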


\begin{proof}
For each $i\in\It$, $\XB_i$ is complemented in $\ell_p$, and $\XB_i^*$ are projecting functionals for $\XB_i$ (see \cite{Pel1960}*{Lemma 1}). Quantitatively, there are linear contractions $T_i\colon \ell_p\to \ell_p$ and $Q_i\colon \ell_p\to \ell_p$ such that $Q_i \circ T_i=\Id_{\ell_p}$, and $T_i(\ee_n)=\xx_{i,n}$ and $Q_i^*(\ee_n)=\xx_{i,n}^*$ for all $n\in\NN$. Let $T\colon \UU(\ell_p) \to \UU(\ell_p)$ and $Q\colon \UU(\ell_p) \to \UU(\ell_p)$ be the linear contractions defined by $R_i\circ T\circ L_i = T_i$ and $R_i\circ Q\circ L_i = T_i$ for all $i\in\It$. The maps $S_0=S\circ T$ and $P_0=Q\circ P$ satisfy the desired properties.
\end{proof}

The following two lemmas are geared to take advantage of the fact that $\ell_p$, $2<p<\infty$, does not embed in $L_1(L_r)$ for any $1\le r <p$.

\begin{lemma}\label{lem:Ulq}
Let $\rho$ be an absolutely continuous function norm over a finite measure space $(\Omega,\Sigma,\mu)$ and $\XX$ be a Banach space with nontrivial cotype.
%Then, $c_0\not\sqsubseteq \LL_\rho(\XX)$.
Let $2< p<\infty$ be such that $\XX$ has cotype smaller than $p$ and $\UU$ be a minimal sequence space over a countable set $\It$. Suposse that $\UU(\ell_p)\sqsubseteq \LL_\rho(\XX)$. Then, there is a disjointly supported sequence
\[
\Phi=(\bphi_{i,n})_{(i,n)\in\It\times\NN}
\]
in $\LL_\rho(\XX)$ equivalent to the unit vector system of $\UU(\ell_p)$. Moreover, if $\XX^*$ has the Radon--Nikodym property and $\UU(\ell_p) \trianglelefteq \LL_\rho(\XX)$, then $\Phi$ is complemented, and there are projecting functionals
\[
(\bphi_{i,n}^*)_{(i,n)\in\It\times\NN}
\]
for $\Phi$ with $\supp(\bphi_{i,n}^*)=\supp(\bphi_{i,n})$ for all $(i,n)\in\It\times\NN$.
\end{lemma}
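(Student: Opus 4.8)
The statement has two parts that look independent: the non-embedding $c_0\not\sqsubseteq\LL_\rho(\XX)$, and the construction of a disjointly supported copy of the unit vector system of $\UU(\ell_p)$ (with projecting functionals in the complemented case). The plan is to prove the non-embedding on its own and then feed it, together with Lemma~\ref{lem:Embedding} and Lemma~\ref{lem:NewOperators}, into the second part.

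\emph{The non-embedding.} Suppose $(f_n)_{n=1}^\infty$ is equivalent to the unit vector basis of $c_0$, so it is semi-normalized and $\sup_N\norm{\sum_{n\le N}\varepsilon_n f_n}_{\LL_\rho(\XX)}<\infty$ for every choice of signs. Over a finite measure space the inclusion $\LL_\rho(\XX)\hookrightarrow L_1(\XX)$ is bounded. I would split into two cases. If $\liminf_n\norm{f_n}_{L_1(\XX)}>0$, then, passing to a subsequence bounded below in $L_1(\XX)$, I use that $\XX$ has nontrivial cotype, so $L_1(\XX)$ has a finite cotype $\bar q$; comparing the cotype lower bound, of order $N^{1/\bar q}$, against the uniformly bounded Rademacher averages gives a contradiction. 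If instead $\liminf_n\norm{f_n}_{L_1(\XX)}=0$, I pass to a subsequence tending to $0$ in $L_1(\XX)$ and apply Lemma~\ref{lem:Embedding} with $\It$ a singleton (here absolute continuity of $\rho$ is used) to obtain pairwise disjoint perturbations $(\bphi_n)$ with $\norm{\bphi_n-f_n}_{\LL_\rho(\XX)}$ as small as desired; a standard small-perturbation argument makes $(\bphi_n)$ equivalent to $(f_n)$, hence to the $c_0$-basis, and passing to the scalar envelopes $\norm{\bphi_n}_\XX$ produces a disjoint, semi-normalized $c_0$-sequence in the lattice $\LL_\rho$, which is impossible because $\rho$ absolutely continuous forces $\LL_\rho$ to have order continuous norm.

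\emph{The disjoint copy.} Fix an embedding $J$ witnessing $\UU(\ell_p)\sqsubseteq\LL_\rho(\XX)$ and set $\bpsi_{i,n}=J(\ee_{i,n})$, a semi-normalized unconditional basic sequence equivalent to the unit vector system. The crux is to verify the hypothesis of Lemma~\ref{lem:Embedding}, namely $\inf_n\norm{\bpsi_{i,n}}_{L_1(\XX)}=0$ for each $i\in\It$, and this is exactly where $p>2$ and the cotype restriction enter. Since $\norm{\sum_{n\le N}\varepsilon_n\ee_{i,n}}_{\UU(\ell_p)}=N^{1/p}$, the embedding yields the upper estimate $\norm{\sum_{n\le N}\varepsilon_n\bpsi_{i,n}}_{\LL_\rho(\XX)}\le\norm{J}N^{1/p}$, which transfers to $L_1(\XX)$ through the bounded inclusion; as $L_1(\XX)$ has cotype $\max\{2,q_\XX\}<p$ for a cotype $q_\XX<p$ of $\XX$, a positive infimum would force $N^{1/\max\{2,q_\XX\}}\lesssim N^{1/p}$, absurd as $N\to\infty$. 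With the infimum in hand I apply Lemma~\ref{lem:Embedding} to $(\bpsi_{i,j})_{(i,j)\in\It\times\Nt}$ and, $\rho$ being absolutely continuous, obtain a one-to-one $\alpha$ whose image meets every fibre $\{i\}\times\Nt$ infinitely often, pairwise disjoint sets $A_n$ with $\mu(A_n)\to0$, and $\norm{\bpsi_{\alpha(n)}-\bpsi_{\alpha(n)}\chi_{A_n}}_{\LL_\rho(\XX)}\le\varepsilon_n$. Setting $\bphi_{\alpha(n)}=\bpsi_{\alpha(n)}\chi_{A_n}$ gives a disjointly supported sequence; choosing $(\varepsilon_n)$ small against the unconditionality constant and the coordinate functionals, a small-perturbation argument shows $(\bphi_{\alpha(n)})$ is equivalent to $(\bpsi_{\alpha(n)})$. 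Reindexing by $\It\times\NN$ (each fibre is infinite) and using that the $\ell_p$-basis is perfectly homogeneous, so that the surviving subsequence of each fibre is again equivalent to it, I conclude that $\Phi$ is equivalent to the unit vector system of $\UU(\ell_p)$.

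\emph{Complementation and the main obstacle.} Assume now $\XX^*$ has the Radon--Nikodym property and $\UU(\ell_p)\trianglelefteq\LL_\rho(\XX)$. By Theorem~\ref{thm:weakclose} I may identify $\LL_\rho(\XX)^*=\LL_{\rho^*}(\XX^*)$, so the complemented sequence $\Psi$ has bounded projecting functionals $\bpsi_{i,n}^*\in\LL_{\rho^*}(\XX^*)$ with $\bpsi_{i,n}^*(\bpsi_{j,m})=\delta_{(i,n),(j,m)}$. The natural candidates $\bphi_{\alpha(n)}^*=\bpsi_{\alpha(n)}^*\chi_{A_n}$ are pairwise disjointly supported and satisfy $\norm{\bphi_{\alpha(n)}^*}_{\XX^*}\le\norm{\bpsi_{\alpha(n)}^*}_{\XX^*}$; moreover, since $\bphi_{\alpha(n)}$ is supported in $A_n$ and $\bpsi_{\alpha(n)}^*(\bpsi_{\alpha(n)})=1$, the pairing $\bphi_{\alpha(n)}^*(\bphi_{\alpha(n)})=\int_{A_n}\bpsi_{\alpha(n)}^*(\bpsi_{\alpha(n)})$ differs from $1$ by at most $C\varepsilon_n$ for a constant $C$, so $\inf_n\abs{\bphi_{\alpha(n)}^*(\bphi_{\alpha(n)})}>0$. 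Having $c_0\not\sqsubseteq\LL_\rho(\XX)$ from the first part, Lemma~\ref{lem:NewOperators} then yields that $\Phi$ is complemented with projecting functionals proportional to the $\bphi_{i,n}^*$. I expect the genuinely delicate point to be the requirement $\supp(\bphi_{i,n}^*)=\supp(\bphi_{i,n})$: restricting $\bphi_{\alpha(n)}^*$ to $\supp\bphi_{\alpha(n)}$ leaves the pairing untouched (the integrand vanishes where $\bpsi_{\alpha(n)}=0$) and gives $\supp\bphi_{\alpha(n)}^*\subseteq\supp\bphi_{\alpha(n)}$, but the reverse inclusion, \emph{together} with the domination $\norm{\bphi^*}_{\XX^*}\le\norm{\bpsi^*}_{\XX^*}$, forces $\supp\bphi_{i,n}\subseteq\supp\bpsi_{i,n}^*$. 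Guaranteeing this co-support property — essentially arranging that the disjointified vectors live inside the supports of their dual functionals, which I would attempt by a simultaneous choice of the disjointifying sets for $\Psi$ and $\Psi^*$ and, if needed, by first replacing the embedding through Lemma~\ref{lem:VecPel} — is the main obstacle, rather than either of the cotype estimates above.
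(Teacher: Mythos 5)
Your proposal is correct, and at the decisive step it takes a genuinely different route from the paper. To feed Lemma~\ref{lem:Embedding} you verify $\inf_n\norm{\bpsi_{i,n}}_{L_1(\mu,\XX)}=0$ directly, by playing the row estimate $\norm{\sum_{n\le N}\varepsilon_n\bpsi_{i,n}}_{\LL_\rho(\XX)}\le\norm{S}N^{1/p}$ against the cotype $\bar{q}=\max\{2,q_{\XX}\}<p$ of $L_1(\mu,\XX)$ coming from Theorem~\ref{thm:TCTVVFS}. The paper argues qualitatively instead: since $\ell_p$ is $\ell_p$-saturated \cite{Pel1960} and $L_1(\mu,\XX)$ has cotype smaller than $p$, the composition of the $i$th row embedding with the inclusion $\LL_\rho(\XX)\to L_1(\mu,\XX)$ is an isomorphism on no infinite-dimensional subspace of $\ell_p$; this produces normalized disjointly supported blocks $\xx_{i,n}$ with images small in $L_1(\mu,\XX)$, and Lemma~\ref{lem:VecPel} then replaces the embedding (and, in the complemented case, the projection) by one carrying the unit vectors to those images. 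Your computation is more elementary and bypasses Lemma~\ref{lem:VecPel}; what the paper's detour buys is block structure with co-supported biorthogonals inside the sequence space, a pattern it reuses in Lemma~\ref{lem:UlqDisjoint}. After that the proofs run parallel: Lemma~\ref{lem:Embedding} with pairwise disjoint $A_n$, small perturbation, symmetry of the $\ell_p$ basis, and Lemma~\ref{lem:NewOperators} for complementation --- you restrict the pulled-back coordinate functionals $\bpsi^*_{\alpha(n)}$ and check $\abs{\bphi^*_{\alpha(n)}(\bphi_{\alpha(n)})-1}\le\norm{P}\varepsilon_n$, whereas the paper first notes that $\Phi$, being congruent to a complemented sequence, is complemented, and then restricts $\Phi$'s own projecting functionals to $\supp(\bphi_n)$; both are legitimate instances of that lemma. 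The $c_0$ assertion the paper gets by running the same argument with $c_0$ in place of $\ell_p$, leaving the final contradiction implicit; your stand-alone two-case proof makes that content explicit.

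One justification does need repair. In the $c_0$ argument you rule out a disjoint, semi-normalized, positive sequence $(g_n)$ with $\sup_N\rho\enpar{\sum_{n\le N}g_n}<\infty$ ``because absolute continuity forces order continuity''. Order continuity alone does not do this: $c_0$ itself is an order continuous lattice full of such sequences. What saves the step is that function norms in the sense of \cite{BennettSharpley1988} --- the paper's convention --- carry the Fatou property: then $g=\sum_n g_n$ satisfies $\rho(g)\le\sup_N\rho\enpar{\sum_{n\le N}g_n}<\infty$, so $g\in\LL_\rho$; the supports of the $g_n$ are pairwise disjoint subsets of a finite measure space, so $\mu(\supp_\mu(g_n))\to0$; and absolute continuity applied to $g$ gives $\rho(g_n)\le\rho(g\chi_{\supp_\mu(g_n)})\to0$, contradicting semi-normalization. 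Equivalently, absolute continuity together with Fatou makes $\LL_\rho$ a KB-space, hence free of copies of $c_0$ altogether.

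The point you single out as the main obstacle is in fact no obstacle. The paper's own proof does not achieve $\supp(\bphi_{i,n}^*)=\supp(\bphi_{i,n})$ either: it restricts projecting functionals to $B_n=\supp(\bphi_n)$, which yields only the inclusion $\supp(\bphi_{i,n}^*)\subseteq\supp(\bphi_{i,n})$ (the same happens in Lemma~\ref{lem:UlqDisjoint}). That inclusion --- really just the pairwise disjointness of the supports of $\Phi^*$ together with biorthogonality --- is all that is ever needed; the later applications in Proposition~\ref{prop:MixedNormLsLr} and Theorem~\ref{thm:MixedNormLsLrComp} use only the disjointness of $\Phi$ itself. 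So your construction, with the functionals cut down to $\supp(\bphi_{i,n})$ exactly as you describe, already establishes everything the lemma is used for; there is no co-support condition left to engineer.
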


\begin{proof}
Let $J$ denote the embedding of $\LL_\rho(\XX)$ into $L_1(\mu,\XX)$. For each $i\in\It$, let $L_i\colon \ell_p\to \UU(\ell_p)$ be the canonical $i$th lifting. By assumption, there is an isomorphic embedding $S\colon \UU(\ell_p)\to \LL_\rho(\XX)$.

Fix $i\in\It$. Since $L_1(\mu,\XX)$ inherits from $\XX$ the property of having cotype smaller than $p$, and $\ell_p$ is $\ell_p$-saturated (see \cite{Pel1960}*{Lemma~2}), the bounded linear map
\[
J\circ S\circ L_i\colon \ell_p \to L_1(\mu,\XX)
\]
is not an isomorphic embedding on any infinite dimensional subspace of $\ell_p$. Therefore, there is a pairwise disjointly supported sequence $(\xx_{i,n})_{n=1}^\infty$ in $c_{00}$ such that
\begin{itemize}
\item $\norm{\xx_{i,n}}_p=1$ for all $n\in\NN$, and
\item $\lim_n\norm{ S(L_i(\xx_{i,n}))}_{L_1(\XX)}=0$.
\end{itemize}
Set $\bpsi_{i,n}=S(L_i(\xx_{i,n}))$ for all $i\in\It$ and $n\in\NN$. Put
\[
c=\inf \enbrace{ \norm{S(x)}_{\LL(\rho)} \colon \norm{x}_{\UU(\ell_p)}=1}
\]
and pick $(\varepsilon_{n})_{n=1}^\infty$ in $(0,c/2)$ such that
\[
\sum_{n=1}^\infty \frac{2\varepsilon_n}{c-2\varepsilon_n}<1.
\]
Applying Lemma~\ref{lem:Embedding} gives a one-to-one map $\alpha\colon\NN\to \It\times \NN$ and a pairwise disjoint sequence $(A_n)_{n=1}^\infty$ in $\Sigma$ such that
\begin{itemize}
\item for all $i\in\It$, $\alpha(\NN)$ hits the $i$th row infinitely many times, and
\item $\norm{ \bpsi_{\alpha(n)} -\bpsi_{\alpha(n)} \chi_{A_n} }_{\LL_\rho(\XX)}\le \varepsilon_n$ for all $n\in\NN$.
\end{itemize}
Since the canonical basis of $\ell_p$ is symmetric and perfectly homogeneous, $(\bpsi_{\alpha(n)})_{n=1}^\infty$ is permutatively equivalent to the unit vector system of $\UU(\ell_p)$.

Set $\bphi_n=\bpsi_{\alpha(n)} \chi_{A_n}$ for each $n\in\NN$. Let $(\bphi_n^*)_{n=1}^\infty$ in $(\LL_{\rho}(\XX))^*$ be such that $ \bphi_n^*(\bphi_n )=1$, and $\norm{\bphi_n^*}=1/\norm{\bphi_n}_{\LL_\rho(\XX)}$ for all $n\in\NN$. We have
\[
\sum_{n=1}^\infty \norm{\bphi_n^*} \norm{ \bpsi_{\alpha(n)} -\bphi_n}_{\LL_\rho(\XX)}<1.
\]
By the small perturbation principle (see \cite{AnsorenaBello2025}*{Lemma 3.5}), $(\bpsi_{\alpha(n)})_{n=1}^\infty$ is congruent to $\Phi=(\bphi_n)_{n=1}^\infty$.

If $\Psi$ is complemented, so is $\Phi$. Let $(\bphi_n^*)_{n=1}^\infty$ be projecting functionals for $\Phi$. By Lemma~\ref{lem:NewOperators}, so are $(\bphi_n^*\chi_{B_n})_{n=1}^\infty$, where $B_n=\supp(\bphi_n)\subseteq A_n$.
\end{proof}

\begin{lemma}\label{lem:UlqDisjoint}
Let $\rho$ be an absolutely continuous function norm over a finite measure space $(\Omega,\Sigma,\mu)$ and $\XX$ be a Banach space. Let $1\le p<2$ and let $\UU$ be a minimal sequence space over a countable set $\It$. Suppose that $\XX$ has Rademacher type greater than $p$, $\XX^*$ has the Radon--Nikodym property, and $\UU(\ell_p)\ \trianglelefteq \LL_\rho(\XX)$. Then, there is a disjointly supported sequence
\[
\Phi=(\bphi_{i,j})_{(i,j)\in\It\times\NN}
\]
in $\LL_\rho(\XX)$ equivalent to the unit vector system of $\UU(\ell_p)$. Moreover, there are projecting functionals
\[
\Phi^*=(\bphi_{i,j}^*)_{(i,j)\in\It\times\NN}
\]
for $\Phi$ such that $\supp(\bphi_{i,j}^*)=\supp(\bphi_{i,j})$ for all $i\in\It$ and $n\in\NN$.
\end{lemma}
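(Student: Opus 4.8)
The plan is to dualize the situation and run the argument of Lemma~\ref{lem:Ulq} with the roles of type and cotype interchanged. First I would fix a complemented embedding of $\UU(\ell_p)$ into $\LL_\rho(\XX)$, witnessed by $S\colon\UU(\ell_p)\to\LL_\rho(\XX)$ and $P\colon\LL_\rho(\XX)\to\UU(\ell_p)$ with $P\circ S=\Id$. Since $\rho$ is absolutely continuous and $\XX^*$ has the Radon--Nikodym property, Theorem~\ref{thm:weakclose} identifies $(\LL_\rho(\XX))^*$ with $\LL_{\rho^*}(\XX^*)$, so the projecting functionals $\bpsi^*_{i,j}=P^*(\ee_{i,j})$ of the basic sequence $\bpsi_{i,j}=S(\ee_{i,j})$ are honest functions in $\LL_{\rho^*}(\XX^*)$. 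I would then record the dual estimates on invariants: as $\XX$ has type greater than $p$, its dual $\XX^*$ has cotype smaller than $p'$, and $p'>2$; hence by Theorem~\ref{thm:TCTVVFS} the space $L_1(\mu,\XX^*)$ also has cotype smaller than $p'$.

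The crucial preparatory step is to force the dual functionals to vanish in $L_1(\mu,\XX^*)$ along each row. Because $\ell_{p'}$ is $\ell_{p'}$-saturated of cotype $p'$, for each $i\in\It$ the composition of $P^*$ with the $i$-th canonical lifting into $\UU^*(\ell_{p'})$ and the bounded inclusion $\LL_{\rho^*}(\XX^*)\hookrightarrow L_1(\mu,\XX^*)$ is not an isomorphic embedding on any infinite-dimensional subspace of $\ell_{p'}$; so I can select disjointly supported normalized blocks $\xx^*_{i,n}$ in $\ell_{p'}$, together with matching biorthogonal blocks $\xx_{i,n}$ in $\ell_p$ on the same supports, whose images tend to $0$ in $L_1(\mu,\XX^*)$. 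Applying Lemma~\ref{lem:VecPel} to $S$ and $P$ with the \emph{original} minimal space $\UU$ and exponent $p$ — precisely the move that avoids imposing any minimality hypothesis on $\UU^*$ — I replace $S,P$ by $S_0,P_0$, so that the new functionals $\bpsi^*_{i,n}=P_0^*(\ee_{i,n})=P^*(R_i^*(\xx^*_{i,n}))$ satisfy $\lim_n\norm{\bpsi^*_{i,n}}_{L_1(\mu,\XX^*)}=0$ for every $i$, while $\bpsi_{i,n}=S_0(\ee_{i,n})$ and $\bpsi^*_{i,n}$ remain biorthogonal and equivalent to the unit vector system of $\UU(\ell_p)$. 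The case $p=1$ runs the same way, with $c_0$ playing the role of $\ell_{p'}$ and $c_0\not\sqsubseteq L_1(\mu,\XX^*)$ replacing the cotype obstruction.

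Next I would disjointify the \emph{dual} functionals rather than the primal ones, the latter being impossible since $\ell_p$ with $p<2$ embeds into $L_1$. Feeding $(\bpsi^*_{i,n})$ into Lemma~\ref{lem:Embedding} yields a one-to-one map $\alpha\colon\NN\to\It\times\NN$ meeting every row infinitely often and pairwise disjoint sets $A_n$ with $\norm{\bpsi^*_{\alpha(n)}-\bpsi^*_{\alpha(n)}\chi_{A_n}}_{\LL_{\rho^*}(\XX^*)}\le\varepsilon_n$. Setting $\bphi_n=\bpsi_{\alpha(n)}\chi_{A_n}$ and $\bphi^*_n=\bpsi^*_{\alpha(n)}\chi_{A_n}$, both families are disjointly supported, enjoy the pointwise norm domination $\norm{\bphi_n}_\XX\le\norm{\bpsi_{\alpha(n)}}_\XX$ and $\norm{\bphi^*_n}_{\XX^*}\le\norm{\bpsi^*_{\alpha(n)}}_{\XX^*}$, and the scalar $\bphi^*_n(\bphi_n)$ differs from $\bpsi^*_{\alpha(n)}(\bpsi_{\alpha(n)})=1$ only by the integral of the pointwise pairing over $\Omega\setminus A_n$, which is $O(\varepsilon_n)$; hence $\inf_n\abs{\bphi^*_n(\bphi_n)}>0$. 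Since $\XX$ has nontrivial type it has nontrivial cotype, so $c_0\not\sqsubseteq\LL_\rho(\XX)$ by the first assertion of Lemma~\ref{lem:Ulq}, and Lemma~\ref{lem:NewOperators} applies to $\Psi=(\bpsi_{\alpha(n)})$, $\Phi=(\bphi_n)$, $(\bphi^*_n)$. It gives that $\Phi$ is complemented and equivalent to $\Psi$, with projecting functionals $(\lambda_n\bphi^*_n)$ supported on $A_n=\supp(\bphi_n)$. By symmetry of the $\ell_p$-basis and the fact that $\alpha$ meets every row infinitely often, $\Psi$ is permutatively equivalent to the unit vector system of $\UU(\ell_p)$, and re-indexing by $\It\times\NN$ delivers the desired $\Phi$ and $\Phi^*$.

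The step I expect to be the main obstacle is the disjointification of the dual functionals: invoking the pairwise-disjoint conclusion of Lemma~\ref{lem:Embedding} in $\LL_{\rho^*}(\XX^*)$ requires the functions $\bpsi^*_{i,n}$ to have absolutely continuous norm there, whereas the hypotheses only grant absolute continuity of $\rho$, and $\rho^*$ may genuinely fail it (for instance when $\rho=\norm{\cdot}_1$, so that $\LL_{\rho^*}(\XX^*)=L_\infty(\mu,\XX^*)$). Settling this point — either by arguing that projecting functionals of a complemented copy of $\UU(\ell_p)$ automatically fall in the absolutely continuous part of $\LL_{\rho^*}(\XX^*)$, or by first passing to a sublattice on which the disjointification is legitimate — is the delicate heart of the matter; once it is secured, everything else is the bookkeeping assembled from Lemmas~\ref{lem:VecPel}, \ref{lem:Embedding} and \ref{lem:NewOperators}.
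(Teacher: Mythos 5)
Your setup---dualizing, using that $L_1(\mu,\XX^*)$ has cotype smaller than $p'$, producing dual blocks whose images tend to $0$ in $L_1$, and normalizing the situation via Lemma~\ref{lem:VecPel}---coincides with the paper's, and your final appeal to Lemma~\ref{lem:NewOperators} is the right endgame. But the step you yourself flag as the ``main obstacle'' is a genuine gap, and it is exactly the part of the argument the paper has to work for: you invoke the pairwise-disjoint conclusion of Lemma~\ref{lem:Embedding} for the sequence $(\bpsi^*_{i,n})$ in $\LL_{\rho^*}(\XX^*)$, but that conclusion is available only when the ambient function norm---here $\rho^*$---is absolutely continuous, and the hypotheses grant this only for $\rho$. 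Moreover, your first suggested repair provably fails: for $\rho=\norm{\cdot}_1$ and $p=1$ (take $\UU=\ell_1$, so that $\UU(\ell_1)=\ell_1\trianglelefteq L_1$), the projecting functionals live in $L_\infty(\mu,\XX^*)$, whose absolutely continuous part is trivial, so they cannot ``automatically fall'' into it. Your second repair (passing to a sublattice) is not carried out, and it is unclear what sublattice would work. So what you call bookkeeping is fine, but what you defer is the actual content of the lemma.

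The paper's resolution is different and does not need absolute continuity of $\rho^*$ at all. It applies the \emph{general} (non-increasing sets) version of Lemma~\ref{lem:Embedding} on the dual side, obtaining nested sets $A_n$ with $\norm{\bpsi^*_{\alpha(n)}-\bpsi^*_{\alpha(n)}\chi_{A_n}}_{\LL_{\rho^*}(\XX^*)}\le\varepsilon_n$; by weak*-closedness of $\LL_{\rho^*}(A_n,\XX^*)$ (Theorem~\ref{thm:weakclose}\ref{Dual:C}) and the small perturbation principle it may assume $\supp(\bpsi^*_{\alpha(n)})\subseteq A_n$. It then uses absolute continuity of $\rho$ on the \emph{primal} side to extract an increasing $\gamma$ (still hitting every row infinitely often) with $\norm{\bpsi_{\alpha(\gamma(n))}\chi_{A_{\gamma(n+1)}}}_{\LL_\rho(\XX)}\le\varepsilon_n$, and after a further perturbation assumes $\supp(\bpsi_{\beta(n)})\subseteq\Omega\setminus B_{n+1}$, where $B_n=A_{\gamma(n)}$. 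Taking $\bphi_n=\bpsi_{\beta(n)}\chi_{B_n}$ and $\bpsi^*_{\beta(n)}\chi_{\Omega\setminus B_{n+1}}$ as candidate functionals places both families inside the pairwise disjoint ``annuli'' $B_n\setminus B_{n+1}$, and only then does Lemma~\ref{lem:NewOperators} apply as you intend. In short, disjointness is manufactured from nestedness on the dual side combined with absolute continuity on the primal side; this maneuver is the idea missing from your proposal.
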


\begin{proof}
Let $J$ denote the embedding of $\LL_{\rho^*}(\XX^*)$ into $L_1(\mu,\XX^*)$. For each $i\in\It$, let $L_i\colon \ell_{p}\to \UU(\ell_{p})$ and $R_i\colon \UU(\ell_{p})\to \ell_{p}$ be the canonical $i$th lifting and the canonical $i$th projection, respectively. By assumption, there are $S\colon \UU(\ell_{p})\to \LL_{\rho}(\XX)$ and $P\colon \LL_{\rho}(\XX)\to \UU(\ell_{p})$ such that $P\circ S=\Id_{\UU(\ell_p)}$.

Fix $i\in\It$. Since $L_1(\mu,\XX^*)$ has Rademacher cotype smaller than $p'$, the bounded linear map
\[
J\circ P^*\circ R_i^*\colon \ell_{p'} \to L_1(\mu,\XX^*)
\]
fail to be an isomorphic embedding on any infinite dimensional subspace of $\ell_{p'}$. Hence, there is a pairwise disjointly supported sequence $(\xx_{i,n}^*)_{n=1}^\infty$ in $c_{00}$ such that
\begin{itemize}
\item $\norm{\xx_{i,n}^*}_{p'}=1$ for all $n\in\NN$, and
\item $\lim_n \norm{ P^*(R_i^*(\xx_{i,n}^*))}_{L_1(\XX^*)}=0$.
\end{itemize}
Pick $(\xx_{i,n})_{n=1}^\infty$ in $c_{00}$ such that $\supp(x_{i,n})=\supp(x^*_{i,n})$, $\norm{\xx_{i,n}}_p=1$, and $\xx_{n,i}^*(\xx_{n,i})=1$ for all $n\in\NN$. By Lemma~\ref{lem:VecPel}, we can assume that $L_i(\xx_{i,n})=\ee_{i,n}$ and $R_i^*(\xx^*_{i,n})=\ee^*_{i,n}$ for all for all $i\in\It$ and $n\in\NN$. Put $S(\ee_{i,n})=\bpsi_{i,n}$ and $P^*(\ee_{i,n})=\bpsi^*_{i,n}$ for all $i\in\It$ and $n\in\NN$. Pick $(\varepsilon_{n})_{n=1}^\infty$ in $(0,\infty)$ such that
\[
\max\{\norm{S},\norm{P}\} \sum_{n=1}^\infty \varepsilon_n<1.
\]
Applying Lemma~\ref{lem:Embedding} gives a one-to-one map $\alpha\colon\NN\to \It\times \NN$ and a non-increasing sequence $(A_n)_{n=1}^\infty$ in $\Sigma$ such that
\begin{itemize}
\item $ \mu(A_n)\le \varepsilon_n$ for all $n\in\NN$,
\item for all $i\in\It$, $\alpha(\NN)$ hits the $i$th row infinitely many times, and
\item $\norm{ \bpsi_{\alpha(n)}^* -\bpsi_{\alpha(n)}^* \chi_{A_n} }_{\LL_{\rho^*}(\XX^*)}\le \varepsilon_n$ for all $n\in\NN$.
\end{itemize}
By the symmetry of the unit vector system of $\ell_p$, $(\bpsi_{\alpha(n)})_{n=1}^\infty$ is permutatively equivalent to the unit vector system of $\UU(\ell_p)$. By the small perturbation principle (see \cite{AnsorenaBello2025}*{Lemma 3.6}) we can assume that $\supp(\bpsi_{\alpha(n)}^*)\subseteq A_n$ for all $n\in\NN$.

Use absolute continuity to pick $\gamma\colon\NN\to\NN$ increasing such that
\begin{itemize}
\item for all $i\in\It$, $\alpha(\gamma(\NN))$ hits the $i$th row infinitely many times, and
\item $\norm{ \bpsi_{\alpha(\gamma(n))} \chi_{A_{\gamma(n+1)}}}_{\LL_\rho(\XX)}\le \varepsilon_n$ for all $n\in\NN$.
\end{itemize}
Set $B_n=A_{\gamma(n)}$ for all $n\in\NN$. By the symmetry of the unit vector system of $\ell_p$, Theorem~\ref{thm:weakclose}\ref{Dual:C} and the small perturbation principle (see \cite{AnsorenaBello2025}*{Lemma 3.5}), we can assume that there is a bijection $\beta\colon\NN\to\It\times\NN$ such that
\begin{itemize}
\item $\supp(\bpsi_{\beta(n)}^*)\subseteq B_n$ for all $n\in\NN$, and
\item $\supp(\bpsi_{\beta(n)})\subseteq [0,1]\setminus B_{n+1}$ for all $n\in\NN$.
\end{itemize}
%By Lemma~\ref{lem:Ulq}, $c_0\not\sqsubseteq \LL_\rho(\XX)$.
By Lemma~\ref{lem:NewOperators}, the sequence $\Phi=(\bpsi_{\beta(n)}\chi_{B_n})_{n=1}^\infty$ is permutatively equivalent to $\Psi$, and $(\bpsi^*_{\beta(n)}\chi_{\Omega\setminus B_{n+1}})_{n=1}^\infty$ are projecting functionals for $\Phi$.
\end{proof}

One of the advantages of dealing with disjointly supported sequences is that it permits to convexify the embeddings we obtain. Recall that given a function space $\LL$ and $t\in[1,\infty)$ its convexification $\LL^{(t)}$ is the Banach lattice consisting of all functions $f$ such that $\abs{f}^t\in\LL$.
{\Anso We refer the reader to \cite{LinTza1979}*{Section~1.d} for the basics of this construction in the setting of abstract Banach lattices. The convexification procedure defines an action of the multiplicative semigroup $[1,\infty)$ on the class of Banach lattices. If we consider $t$-convexified spaces for $0<t<1$, we obtain a group action. The price we pay to obtain this richer structure is widening our scope and going into the study of quasi-Banach lattices. The theory of quasi-Banach lattices dates back to \cite{Kalton1984b} and, as far as we know, this convexification tool was first applied to locally nonconvex spaces in \cite{KMP2003}.}

Suppose that $\UU$ is a sequence space over a countable set $\Nt$, $\LL$ is a function space, and there is an isomorphic embedding $J\colon\UU\to\LL$ such that $J(\ee_n)_{n\in\Nt}$ is disjointly supported in $\LL$. Then,
{\Anso given $0<t<\infty$,}
there is an isomorphic embedding of $\UU^{(t)} \to \LL^{(t)}$ given by
\[
(a_n)_{n\in\Nt} \mapsto \sum_{n\in\Nt} a_n \abs{J(\ee_n)}^{1/t}.
\]
Besides, $\UU$ inherits the lattice convexity and concavity from $\LL$. We record an application of this fact to mixed-norm Lebesgue spaces further reference.

\begin{lemma}\label{lem:CCIneq}
Let $p$, $r$, $s\in[1,\infty]$ and suppose that $L_s(L_r)$ has a lattice disjointly supported sequence equivalent to the unit vector system of $\ell_p$. Then,
\begin{equation}\label{eq:Inequality}
\min\{r,s\}\le p \le\max\{r,s\}.
\end{equation}
\end{lemma}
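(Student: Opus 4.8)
The plan is to transfer the lattice convexity and concavity of the mixed-norm space $L_s(L_r)$ to $\ell_p$ through the lattice disjointly supported embedding provided by the hypothesis, and then to read off the two inequalities from the convexity and concavity exponents of $\ell_p$.

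First I would record the relevant properties of the building blocks. For every $t\le r$ the space $L_r$ is lattice $t$-convex with constant one: since $r/t\ge 1$, the triangle inequality in $L_{r/t}$ gives
\[
\norm{\enpar{\textstyle\sum_j \abs{f_j}^t}^{1/t}}_r^t=\norm{\textstyle\sum_j \abs{f_j}^t}_{r/t}\le \sum_j \norm{\abs{f_j}^t}_{r/t}=\sum_j \norm{f_j}_r^t.
\]
Dually, for every $t\ge r$ the reverse triangle inequality in $L_{r/t}$ (now $r/t\le 1$) shows that $L_r$ is lattice $t$-concave with constant one; the same statements hold for $L_s$. Hence, by the product rule for lattice convexity and concavity recalled before the lemma, $L_s(L_r)$ is lattice $t$-convex whenever $t\le\min\{r,s\}$ and lattice $t$-concave whenever $t\ge\max\{r,s\}$.

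Next I would invoke the hypothesis: it furnishes an isomorphic embedding $J\colon\ell_p\to L_s(L_r)$ whose image $(J(\ee_j))_{j}$ is lattice disjointly supported. By the transference principle stated just before the lemma, $\ell_p$ inherits the lattice convexity and concavity of $L_s(L_r)$. Taking $t=\min\{r,s\}$ shows $\ell_p$ is lattice $\min\{r,s\}$-convex, and taking $t=\max\{r,s\}$ shows $\ell_p$ is lattice $\max\{r,s\}$-concave. Testing these inequalities on the unit vectors $(\ee_j)_{j=1}^n$, for which $\norm{(\sum_{j=1}^n\abs{\ee_j}^t)^{1/t}}_p=n^{1/p}$ while $(\sum_{j=1}^n\norm{\ee_j}_p^t)^{1/t}=n^{1/t}$, forces $n^{1/p}\le C\,n^{1/\min\{r,s\}}$ and $n^{1/\max\{r,s\}}\le C\,n^{1/p}$ for all $n\in\NN$; letting $n\to\infty$ yields $\min\{r,s\}\le p\le\max\{r,s\}$, which is \eqref{eq:Inequality}.

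The argument is essentially routine once this convexity bookkeeping is in place; the only delicate points are the extreme values of the exponents. When $\max\{r,s\}=\infty$ the concavity bound is vacuous but the conclusion $p\le\infty$ is automatic, and when $\min\{r,s\}=1$ the convexity bound reduces to the triangle inequality while $p\ge 1$ is automatic. In particular, the case $p=\infty$, where the unit vector system spans $c_0$, is correctly excluded exactly when $\max\{r,s\}<\infty$, since a lattice that is $\max\{r,s\}$-concave for a finite exponent admits no disjoint copy of $c_0$. Thus the main—and rather mild—obstacle is verifying that $L_s(L_r)$ genuinely is $\min\{r,s\}$-convex and $\max\{r,s\}$-concave, for which the monotonicity of lattice convexity and concavity in the exponent, together with the product rule, suffice.
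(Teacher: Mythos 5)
Your proof is correct and is precisely the argument the paper intends: the lemma is stated there without an explicit proof, as a direct application of the facts recorded immediately before it (that $L_s(L_r)$ inherits lattice $t$-convexity for $t\le\min\{r,s\}$ and $t$-concavity for $t\ge\max\{r,s\}$ from its components, and that a sequence space represented by a lattice disjointly supported sequence inherits these properties). Your write-up simply fills in the same bookkeeping — component convexity/concavity, the product rule, transference through the disjoint embedding, and testing on finite blocks of unit vectors — including a sound treatment of the extreme exponents.
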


\begin{proof}
If $p=\infty$, we infer that $L_r(L_s)$ has no nontrivial cotype, whence $\infty\in\{r,s\}$. Otherwise, convexifying, we obtain that $\ell_2$ embeds into $L_{2s/p,2r/p}$. By Theorem~\ref{thm:TCLqLp},
\[
\min\enbrace{\frac{2r}{p},\frac{2s}{p}} \le 2 \le \max\enbrace{\frac{2r}{p},\frac{2s}{p}}.\qedhere
\]
\end{proof}

Note that, given function spaces $\LL_1$ and $\LL_2$ and $0<t<\infty$, then
\[
(\LL_1(\LL_2))^{(t)} =\LL_1^{(t)}\enpar{\LL_2^{(t)}}.
\]
Given $1\le p\le\infty$ and a measure $\mu$, $(L_p(\mu))^{(t)}=L_{pt}(\mu)$.

We are now ready to achieve the wished-for results about the embeddability of sequence spaces into $L_s(L_r)$-spaces.

\begin{proposition}\label{prop:MixedNormLsLr}
Let $2<p <\infty$ and $r$, $s\in[1,\infty)$. Let $\UU$ be a sequence space over a countable set $\It$. Assume that $\UU(\ell_p)\sqsubseteq L_s(L_r)$. Then, \eqref{eq:Inequality} holds. If, moreover, $ r \le s$ and $p\not=r$, then $p=s$ and $\UU$ is lattice isomorphic to $\ell_p(\It)$.
\end{proposition}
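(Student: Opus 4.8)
The plan is to combine the disjointification lemmas of this section with the convexification trick and the inequality from Lemma~\ref{lem:CCIneq}, reducing everything to the extreme case $r=1$ where the obstruction $\ell_p\not\sqsubseteq L_1(L_r)$ for $p>r$ can be exploited. First I would establish \eqref{eq:Inequality}. Since $p>2$ and $\XX=L_r$ has cotype $\max\{r,2\}<p$ (by Corollary~\ref{cor:TCLqLp}), the hypothesis $\UU(\ell_p)\sqsubseteq L_s(L_r)$ puts us in the setting of Lemma~\ref{lem:Ulq} with $\rho=\norm{\cdot}_s$ and $\XX=L_r$. That lemma produces a lattice disjointly supported sequence $\Phi=(\bphi_{i,n})$ in $L_s(L_r)$ equivalent to the unit vector system of $\UU(\ell_p)$. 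In particular the single row $(\bphi_{1,n})_n$ is a lattice disjointly supported sequence equivalent to the unit vector system of $\ell_p$, so Lemma~\ref{lem:CCIneq} gives $\min\{r,s\}\le p\le\max\{r,s\}$, which is \eqref{eq:Inequality}.

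Now assume $r\le s$ and $p\neq r$. From \eqref{eq:Inequality} we then have $r<p\le s$; the goal is to upgrade this to $p=s$ and identify $\UU$. The key step is to push the disjointly supported copy of $\UU(\ell_p)$ down to the extreme index $r=1$ by convexification. Since $\Phi$ is a \emph{lattice} disjointly supported sequence in $L_s(L_r)$, for any $t\in[1,\infty)$ the map $(a_{i,n})\mapsto\sum a_{i,n}\abs{\bphi_{i,n}}^{1/t}$ embeds $(\UU(\ell_p))^{(t)}=\UU^{(t)}(\ell_{pt})$ disjointly into $(L_s(L_r))^{(t)}=L_{st}(L_{rt})$. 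Choosing $t=1/r$ (so $rt=1$) yields a disjoint embedding $\UU^{(1/r)}(\ell_{p/r})\sqsubseteq L_{s/r}(L_1)$, with $p/r>1$. Here is the crux: I want to conclude that $\ell_{p/r}$ cannot survive inside the inner $L_1$-factor, forcing the copy of $\ell_{p/r}$ to live along the outer $L_{s/r}$-direction and hence $p/r=s/r$, i.e.\ $p=s$.

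To make this precise I would appeal again to Lemma~\ref{lem:Ulq}, but now read in the form that exploits $\ell_m\not\sqsubseteq L_1(L_1)$ for $m>1$: the inner space $L_1$ has cotype $2<p/r$ only if $p/r>2$, so a direct single application may require first reducing to the case $r\le 2$ or splitting according to whether $p/r$ exceeds $2$. I expect this to be the main obstacle — ensuring that after convexification the relevant cotype-versus-$p$ inequality still holds so that the disjointification machinery applies at the inner factor. Once $\ell_{p/r}$ is shown to be incompatible with the inner $L_1$ (via $c_0\not\sqsubseteq$ and the saturation/cotype argument of Lemma~\ref{lem:Ulq} applied to the inner coordinate), the disjoint vectors $\abs{\bphi_{i,n}}^{1/r}$ must be, up to small perturbation, supported on distinct slices of the outer variable; their $\ell_{p/r}$-behaviour is then governed purely by the outer $\norm{\cdot}_{s/r}$-norm, giving $p/r=s/r$ and so $p=s$.

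Finally, with $p=s$ in hand, I would identify $\UU$. Having $p=s>r$ and the disjoint embedding of $\UU(\ell_p)$ into $L_s(L_r)=L_p(L_r)$, the outer norm is exactly the $\ell_p$-norm, and the disjointness in the outer variable together with the equivalence furnished by Lemma~\ref{lem:Ulq} forces the lattice norm of $\UU$ to agree, up to equivalence, with summing the $\ell_p$-norms of disjoint blocks; that is, $\UU$ is lattice isomorphic to $\ell_p(\It)$. Concretely, each $\abs{\bphi_{i,n}}$ may be arranged (after the perturbation in Lemma~\ref{lem:Ulq}) to be supported over disjoint outer sets indexed by $i$, so the $\UU$-norm of $(a_i)_i$ is equivalent to $\enpar{\sum_i\abs{a_i}^p}^{1/p}$, establishing $\UU\simeq\ell_p(\It)$ as lattices.
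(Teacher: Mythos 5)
Your proof of \eqref{eq:Inequality} rests on a circular assumption. To apply Lemma~\ref{lem:Ulq} with $\rho=\norm{\cdot}_s$ and $\XX=L_r$ you need $L_r$ to have cotype smaller than $p$, i.e.\ $\max\{r,2\}<p$, i.e.\ $r<p$; but at that stage nothing rules out $r\ge p$. Indeed, even the conclusion you are trying to prove, $\min\{r,s\}\le p\le\max\{r,s\}$, is compatible with $s\le p<r$ (this is precisely the regime of the open question recorded in Section~\ref{sec:Raynaud}), so you are assuming something strictly stronger than the statement itself. In that regime $L_r$ has cotype $r>p$ and the lemma simply does not apply. The paper avoids this by invoking Lemma~\ref{lem:Ulq} with $\XX=\FF$ and $\rho=\norm{\cdot}_{L_s(L_r)}$ regarded as a scalar function norm on the square: the cotype hypothesis then reads $2<p$, which is given, and the output is exactly a \emph{lattice} disjointly supported copy of the unit vector system of $\UU(\ell_p)$, to which Lemma~\ref{lem:CCIneq} applies. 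A red flag you could have noticed: were your application legitimate, the disjointness it produces is in the \emph{outer} variable, which already forces equivalence to the $\ell_s$ unit vector system and hence $p=s$ --- far more than \eqref{eq:Inequality}.

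The ``moreover'' part also breaks down. Choosing $t=1/r\le 1$ is concavification, not convexification: the paper defines $\LL^{(t)}$ only for $t\in[1,\infty)$, and for $r>1$ the space $\UU^{(1/r)}$ need not be a Banach lattice at all (e.g.\ $\ell_1^{(1/r)}=\ell_{1/r}$), so none of the function-norm, duality, or cotype machinery of Section~\ref{sec:unconditional} is available after your reduction --- you acknowledge this yourself when you call the cotype issue ``the main obstacle'' and leave it unresolved. The fix is that no reduction is needed: once \eqref{eq:Inequality} is established, the hypotheses $r\le s$ and $p\ne r$ give $r<p\le s$, so $\max\{r,2\}<p$ and the application of Lemma~\ref{lem:Ulq} with $\rho=\norm{\cdot}_s$ and $\XX=L_r$ --- the very one you attempted prematurely --- is now legitimate. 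It yields a semi-normalized sequence, disjointly supported in the outer variable, equivalent to the unit vector system of $\UU(\ell_p)$; any such sequence is equivalent to the unit vector system of $\ell_s$, whence $p=s$ (compare a single row) and the unit vector system of $\UU$ is equivalent to that of $\ell_p(\It)$ (compare a single column), i.e.\ $\UU$ is lattice isomorphic to $\ell_p(\It)$. This two-line argument is the paper's proof of the ``moreover'' clause.
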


\begin{proof}
Applying Lemma~\ref{lem:Ulq} with $\rho=\norm{\cdot}_{L_s(L_r)}$ and $\XX=\FF$ gives a lattice disjointly supported family in $L_s(L_r)$ equivalent to the unit vector system of $\UU(\ell_p)$. By Lemma~\ref{lem:CCIneq}, \eqref{eq:Inequality} holds. To prove the `moreover' part, we apply Lemma~\ref{lem:Ulq} with $\rho=\norm{\cdot}_{s}$ and $\XX=L_{r}$. We obtain a disjointly supported family in $L_{s}(\XX)$ equivalent to the unit vector system of $\UU(\ell_{p})$. We infer that the unit vector system of $\UU(\ell_{p})$ is equivalent to the unit vector system of $\ell_{s}$.
\end{proof}

\begin{corollary}\label{cor:Disjointlp}
Let $1\le r\le s<\infty$. Let $1\le p <\infty$ and $\UU$ be a sequence space over a countable set $\It$. If the unit vector system of $\UU(\ell_p)$ is equivalent to a lattice disjointly supported family of $L_s(L_r)$ then either $p=r$ or $p=s$ and $\UU$ is lattice isomorphic to $\ell_p(\It)$.
\end{corollary}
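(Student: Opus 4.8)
The plan is to strip the restriction $p>2$ from Proposition~\ref{prop:MixedNormLsLr} by a convexification argument and then to apply that proposition directly.

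I would first dispose of the case $p=r$, for which the first alternative holds and there is nothing to prove. So suppose $p\not=r$ and fix $t\in[1,\infty)$ with $pt>2$; such a $t$ exists because $p\ge 1$. By hypothesis there is an isomorphic embedding $J\colon\UU(\ell_p)\to L_s(L_r)$ carrying the unit vector system onto a lattice disjointly supported family. Applying to $J$ the convexification principle recorded just before Lemma~\ref{lem:CCIneq}, together with the identities $\enpar{\LL_1(\LL_2)}^{(t)}=\LL_1^{(t)}\enpar{\LL_2^{(t)}}$ and $\enpar{L_p(\mu)}^{(t)}=L_{pt}(\mu)$, I obtain an isomorphic embedding
\[
\UU^{(t)}(\ell_{pt})=\enpar{\UU(\ell_p)}^{(t)} \to \enpar{L_s(L_r)}^{(t)}=L_{st}(L_{rt}).
\]
In particular, $\UU^{(t)}(\ell_{pt})\sqsubseteq L_{st}(L_{rt})$.

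Now the exponents fit the hypotheses of Proposition~\ref{prop:MixedNormLsLr}: one has $2<pt<\infty$ and $1\le rt\le st<\infty$, while $pt\not=rt$ since $p\not=r$. Applying that proposition to the sequence space $\UU^{(t)}$ and the exponents $pt$, $rt$, $st$ gives $pt=st$, hence $p=s$, and moreover that $\UU^{(t)}$ is lattice isomorphic to $\ell_{pt}(\It)$.

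It remains to de-convexify the lattice statement. A lattice isomorphism between two sequence spaces maps atoms to scalar multiples of atoms, so it is a permutation composed with a diagonal operator whose entries are bounded above and below; equivalently, it encodes a two-sided estimate between the two lattice norms on the common unit vector system. Raising the underlying sequences to the power $1/t$ --- that is, applying the functor $(\cdot)^{(1/t)}$, under which $\UU^{(t)}$ becomes $\UU$ and $\ell_{pt}(\It)$ becomes $\ell_p(\It)$ --- turns the lattice isomorphism $\UU^{(t)}\to\ell_{pt}(\It)$ into one $\UU\to\ell_p(\It)$. This yields the second alternative and completes the argument. The only point demanding care is this de-convexification, namely verifying that convexification commutes with lattice isomorphisms of sequence spaces; everything else is a direct transcription of Proposition~\ref{prop:MixedNormLsLr}.
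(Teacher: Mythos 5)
Your proof is correct and is essentially the paper's own argument: dispose of $p=r$, convexify with $t$ chosen so the inner exponent exceeds $2$, apply Proposition~\ref{prop:MixedNormLsLr} to $\UU^{(t)}(\ell_{pt})\sqsubseteq L_{st}(L_{rt})$, and de-convexify. In fact your condition $pt>2$ is the right one (the paper's text says to pick $t$ with $ts>2$ and swaps some subscripts, apparently slips of the pen), and your explicit de-convexification of the lattice isomorphism $\UU^{(t)}\simeq\ell_{pt}(\It)$ back to $\UU\simeq\ell_p(\It)$ fills in a step the paper leaves implicit.
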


\begin{proof}
Suppose $p\not=r$. Pick $t$ such that $ts>2$. We have $\UU^{(t)}(\ell_{ts})\sqsubseteq L_{ts}(L_{tp})$. An application of Proposition~\ref{prop:MixedNormLsLr} gives that $tp=ts$ and $\UU^{(t)}$ is lattice isomorphic to $\ell_{tp}$.
\end{proof}

\begin{theorem}\label{thm:MixedNormLsLrComp}
Let $p$, $r$, $s\in[1,\infty)$ with $p\not=2$ and $\UU$ be a sequence space over a countable set $\It$. If $\UU(\ell_p) \trianglelefteq L_s(L_r)$, then inequality \eqref{eq:Inequality} holds. If, moreover, $p<2$ or $s>1$, then either $p=r$ or $p=s$ and $\UU$ is lattice isomorphic to $\ell_p(\It)$.
\end{theorem}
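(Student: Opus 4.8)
The plan is to reduce the whole statement to the existence of a lattice disjointly supported family equivalent to the unit vector system of $\UU(\ell_p)$, and then to exploit the asymmetry between the outer index $s$ and the inner index $r$ by a combination of peeling, convexification (as packaged in Corollary~\ref{cor:Disjointlp}) and duality. First I would produce such a family $\Phi$ inside $L_s(L_r)$. Writing $L_s(L_r)=\LL_\rho$ with $\rho=\norm{\cdot}_{L_s(L_r)}$ a function norm over $[0,1]^2$ and $\XX=\FF$, I split according to the sign of $p-2$: if $2<p<\infty$, then $\FF$ has cotype $2<p$ and $\UU(\ell_p)\sqsubseteq L_s(L_r)$, so Lemma~\ref{lem:Ulq} applies; if $1\le p<2$, then $\FF$ has type $2>p$ and $\UU(\ell_p)\trianglelefteq L_s(L_r)$, so Lemma~\ref{lem:UlqDisjoint} applies. (Here $\UU$ is minimal because $L_s(L_r)$ is separable, hence so is its complemented subspace $\UU(\ell_p)$ and thus $\UU$ itself; this is exactly where $p\neq 2$ is used, since neither lemma is available at $p=2$.) In either case Lemma~\ref{lem:CCIneq} applied to $\Phi$ yields \eqref{eq:Inequality}.

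For the \emph{moreover} part I would split on the ordering of $r$ and $s$. When $r\le s$, the family $\Phi$ lets me invoke Corollary~\ref{cor:Disjointlp} verbatim, which gives $p=r$, or else $p=s$ with $\UU$ lattice isomorphic to $\ell_p(\It)$; note this branch needs no extra hypothesis (and whenever $r\le s$ with $s=1$ it is vacuous, since then $r=s=1$ forces $p\le 1$). When $r>s$ the argument is genuinely more delicate, and I split further on $p$. If $p<2$, then \eqref{eq:Inequality} gives $p\le \max\{r,s\}=r$; discarding the trivial case $p=r$, I have $p<r$, so $L_r$ has type $\min\{r,2\}>p$ and $L_r^*=L_{r'}$ has the Radon--Nikodym property because $r>1$. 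Applying Lemma~\ref{lem:UlqDisjoint} with $\rho=\norm{\cdot}_s$ and $\XX=L_r$ then produces a family equivalent to $\UU(\ell_p)$ that is disjointly supported in the outer $L_s$-variable; being disjoint there it is equivalent to the unit vector system of $\ell_s$, and then the final step in the proof of Proposition~\ref{prop:MixedNormLsLr} forces $p=s$ with $\UU$ lattice isomorphic to $\ell_p(\It)$.

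The remaining case $p>2$ and $r>s$ is the main obstacle: here $L_r$ has cotype $r\ge p$, so outer-peeling through Lemma~\ref{lem:Ulq} is unavailable, and Corollary~\ref{cor:Disjointlp} cannot be used because it requires $r\le s$. I would resolve this by passing to the dual. Now the \emph{moreover} hypothesis forces $s>1$, and since $r>s>1$ all three indices lie in $(1,\infty)$; hence $L_s(L_r)$ is reflexive with dual $L_{s'}(L_{r'})$ by Theorem~\ref{thm:dualLpLq}. Dualizing the complemented embedding gives $\UU^*(\ell_{p'})\trianglelefteq L_{s'}(L_{r'})$ with $p'<2$ and $r'<s'$, which is precisely the already-settled configuration $r\le s$ for an exponent below $2$. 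Running that argument yields $p'=r'$, or $p'=s'$ with $\UU^*$ lattice isomorphic to $\ell_{p'}$; transposing back gives $p=r$, or $p=s$ with $\UU$ lattice isomorphic to $\ell_p(\It)$.

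I expect the two points requiring care to be, first, checking that the family produced by the peeling application of Lemma~\ref{lem:UlqDisjoint} is disjoint in the \emph{outer} variable, so that it is $\ell_s$-equivalent rather than merely lattice disjoint; and second, justifying that the duality in the last case is licit. The latter is exactly where $s>1$ enters—it guarantees $r',s'<\infty$, the absolute continuity of $\norm{\cdot}_{L_{s'}(L_{r'})}$, and reflexivity—so the hypothesis cannot be dropped when $p>2$, which matches the fact that for $p>2$ and $s=1$ only \eqref{eq:Inequality} survives.
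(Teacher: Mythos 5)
Your proposal is correct and follows essentially the same route as the paper's proof: produce a lattice disjointly supported copy of the unit vector system of $\UU(\ell_p)$ via Lemma~\ref{lem:Ulq} (for $p>2$) or Lemma~\ref{lem:UlqDisjoint} (for $p<2$) and conclude \eqref{eq:Inequality} from Lemma~\ref{lem:CCIneq}, then settle $r\le s$ by Corollary~\ref{cor:Disjointlp}, the case $s<r$, $p<2$ by outer peeling with $\XX=L_r$ and $\rho=\norm{\cdot}_s$, and the case $s<r$, $p>2$, $s>1$ by dualizing to $\UU^*(\ell_{p'})\trianglelefteq L_{s'}(L_{r'})$. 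Your added justifications (minimality of $\UU$ via separability, $r>1$ for the Radon--Nikodym property of $L_{r'}$, and the role of $s>1$ in legitimizing the duality) are accurate and merely make explicit what the paper leaves implicit.
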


\begin{proof}
Depending on whether $p$ is greater or smaller than $2$, we apply Lemma~\ref{lem:Ulq} or Lemma~\ref{lem:UlqDisjoint} with $\rho=\norm{\cdot}_{L_s(L_r)}$ and $\XX=\FF$. In any case, we obtain a lattice disjointly supported family in $L_s(L_r)$ equivalent to the unit vector system of $\UU(\ell_{p})$. By Lemma~\ref{lem:CCIneq}, \eqref{eq:Inequality} holds.

If $r\le s$, it suffices to apply Corollary~\ref{cor:Disjointlp}. Assume that $s<r$, so that $s\le p< r$.

If $p< 2$, so that $p<\min\{2,r\}$, we apply Lemma~\ref{lem:UlqDisjoint} with $\XX=L_r$ and $\rho=\norm{\cdot}_s$ to obtain a disjointly supported sequence in $\LL_s(\XX)$ equivalent to the unit vector system of $\UU(\ell_p)$. Consequently, $\UU(\ell_p)$ is lattice isomorphic to $\ell_s$.

If $s>1$ and $p>2$, then, by duality, $\UU^*(\ell_{p'}) \trianglelefteq L_{s'}(L_{r'})$. Consequently, $p'=s'$ and $\UU^*$ is lattice isomorphic to $\ell_{p'}(\It)$.
\end{proof}

We close this section with a straightforward consequence of Theorem~\ref{thm:MixedNormLsLrComp}. Given $r$, $s\in[1,\infty]$ we set
\[
\Gamma_{r,s}=\enbrace{p\in[1,\infty] \colon \ell_p \trianglelefteq L_s(L_r) }.
\]
\begin{corollary}\label{cor:lpcomplemented}
Let $r,s\in[1,\infty)$.
\begin{enumerate}[label=(\alph*)]
\item $\Gamma_{1,1}= \{1\}$.
\item If $s>1$ or $1<r\le 2$, then $\Gamma_{r,s}= \{2,r,s\}$.
\item If $r>2$, then $ \{1,2,r\}\subseteq\Gamma_{r,1}\subseteq \{1\} \cup [2,r]$.
\end{enumerate}
\end{corollary}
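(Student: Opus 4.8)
The plan is to derive Corollary~\ref{cor:lpcomplemented} from Theorem~\ref{thm:MixedNormLsLrComp} specialized to the case $\UU=\FF$ (so that $\UU(\ell_p)=\ell_p$ and the trivial index set $\It$ is a singleton), combined with the positive embeddings already recorded in the excerpt. First I would settle the positive direction, i.e.\ which values of $p$ actually lie in $\Gamma_{r,s}$. The value $p=r$ always works because $L_s(L_r)$ contains a complemented copy of $\ell_r$ via the averaging maps $J_\Pt$ and $Q_\Pt$ of \eqref{eq:EmblqLq}--\eqref{proj:EmblqLq}; similarly $p=s$ works by applying those maps in the outer variable. For $p=2$ the relevant fact is Pe\l czy\'nski's result \eqref{eq:PelAgain}, $\ell_2\trianglelefteq L_q$ for $1<q<\infty$: when $s>1$ one gets $\ell_2\trianglelefteq L_s\trianglelefteq L_s(L_r)$, and when $1<r\le 2$ (even if $s=1$) one instead embeds $\ell_2\trianglelefteq L_r$ in the inner variable and then complements $L_r$ inside $L_s(L_r)$ by averaging. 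This explains why $\{2,r,s\}\subseteq \Gamma_{r,s}$ whenever $s>1$ or $1<r\le2$, and gives the inclusion $\{1,2,r\}\subseteq\Gamma_{r,1}$ in part (c) (here $1\in\Gamma_{r,1}$ since $L_1\trianglelefteq L_1(L_r)$ in the outer variable, and $2\in\Gamma_{r,1}$ because $r>2$ forces $\ell_2\trianglelefteq L_r$ inner).

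Next I would establish the reverse inclusions using Theorem~\ref{thm:MixedNormLsLrComp}. Fix $p\in\Gamma_{r,s}$ with $p\neq 2$; then $\ell_p\trianglelefteq L_s(L_r)$, so the theorem yields the inequality \eqref{eq:Inequality}, namely $\min\{r,s\}\le p\le\max\{r,s\}$. Under the hypothesis $s>1$ (or $p<2$) the theorem's ``moreover'' clause forces $p=r$ or $p=s$. Thus for part (b), where $s>1$ or $1<r\le2$, the only candidates are $p\in\{2,r,s\}$, matching the positive direction and giving equality $\Gamma_{r,s}=\{2,r,s\}$. One subtlety to handle in case (b) when $s=1$ but $1<r\le2$: here I cannot invoke the $s>1$ branch of the ``moreover'' clause, but since $r\le 2$ any $p\in\Gamma_{r,1}\setminus\{2\}$ satisfies $\min\{r,1\}=1\le p\le r\le 2$, so $p\le 2$, and the $p<2$ branch of the theorem then applies to conclude $p\in\{r,1\}=\{r,s\}$; one checks $s=1=\min\{r,s\}$ is consistent.

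For part (a), taking $r=s=1$ in \eqref{eq:Inequality} gives $p=1$ for every $p\in\Gamma_{1,1}\setminus\{2\}$, and $2\notin\Gamma_{1,1}$ by \eqref{eq:LinPel}, which records $\ell_2\not\trianglelefteq L_1$; hence $\Gamma_{1,1}=\{1\}$. For the upper bound in part (c), where $r>2$ and $s=1$, the inequality \eqref{eq:Inequality} reads $1\le p\le r$ for any $p\in\Gamma_{r,1}$, while $p\neq 2$ combined with the $p<2$ branch of the theorem shows any $p\in\Gamma_{r,1}$ with $p<2$ must equal $1$; therefore $\Gamma_{r,1}\subseteq\{1\}\cup[2,r]$. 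I expect the main obstacle to be the bookkeeping in case (b) with $s=1$ and $1<r\le2$, where the ``moreover'' hypothesis ``$p<2$ or $s>1$'' must be invoked through the $p<2$ alternative rather than $s>1$; verifying that the forced value $p=s=1$ is genuinely attained (via the inner/outer complementation above) rather than merely permitted is the delicate point, but it follows from the positive direction already assembled.
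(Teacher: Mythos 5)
Your proposal is correct and follows essentially the same route as the paper: the upper bounds come from applying Theorem~\ref{thm:MixedNormLsLrComp} (with $\UU(\ell_p)=\ell_p$), splitting into the cases $p<2$, $p>2$ with $s>1$, and $p>2$ with $s=1$, while the memberships $\{2,r,s\}\subseteq\Gamma_{r,s}$ (and the exclusion $2\notin\Gamma_{1,1}$) follow from \eqref{eq:PelAgain}, \eqref{eq:LinPel}, and the standard averaging/constant-function complementations. Your case analysis for $s=1$, $1<r\le 2$ (forcing $p<2$ via \eqref{eq:Inequality} and then invoking the $p<2$ branch of the ``moreover'' clause) is exactly the bookkeeping the paper's terse proof leaves implicit.
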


\begin{proof}
Suppose that $\ell_p \trianglelefteq L_s(L_r)$. If $p<2$, or $p>2$ and $s>1$, then $p\in\{s,r\}$. If $p>2$ and $s=1$, then $2<p\le r$. Bearing in mind \eqref{eq:PelAgain} and \eqref{eq:LinPel}, we are done.
\end{proof}
%------------------------
\section{Isomorphic classification of \texorpdfstring{$L_q(L_p)$}{}-spaces}\label{sec:proof}\noindent
%------------------------
To address the classification we will need to tell apart the unit vector system of $Z_{q,p}$ and $Z_{p,q}$, $p\not=q$. We could approach this task using elementary techniques. Indeed, if there were a one-to-one map $\alpha\colon\NN^2\to\NN^2$ such that the linear mapping given by $\ee_{k,n}\to \ee_{\alpha(k,n)}$, $(k,n)\in\NN^2$, was an isomorphic embedding from $Z_{p,q}$ to $Z_{q,p}$, we would reach an absurdity by analyzing how $\alpha$ transforms rows and columns. Nonetheless, we will use the developed machinery to provide a functional argument that avoids making the computations necessary to complete the proof we have outlined.

\begin{lemma}\label{lem:TABases}
Let $p$, $q$, $r$, $s\in [1,\infty]$. Suppose that there is a constant $C$ such that the unit vector system of $\ell_q^n(\ell_p^n)$, $n\in\NN$, is permutatively $C$-equivalent to a subbasis of the unit vector system of $\ell_s(\ell_r)$. Then, $p=q=r$, $p=q=s$, or $(p,q)=(r,s)$.
\end{lemma}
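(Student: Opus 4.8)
The plan is to turn the uniform family of finite blocks into a single infinite lattice-disjoint object sitting inside an $L_s(L_r)$-space, and then read off the relations from the complementation theorems of Section~\ref{sec:unconditional}. First I would record the structural consequences of the hypothesis. The unit vector system of $\ell_q^n(\ell_p^n)$ is $1$-unconditional and lattice disjoint, and a subbasis of the unit vector system of $\ell_s(\ell_r)$ spans a $1$-complemented sublattice; hence the permutative $C$-equivalence exhibits, for each $n$, a lattice-disjoint family in $\ell_s(\ell_r)$ that is $C$-equivalent to the unit vector system of $\ell_q^n(\ell_p^n)$ and uniformly complemented by the coordinate projections, so that $\ell_q^n(\ell_p^n)\trianglelefteq \ell_s(\ell_r)\trianglelefteq L_s(L_r)$ uniformly in $n$. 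Restricting to a single row (resp.\ column) yields uniformly disjoint copies of $\ell_p^n$ (resp.\ $\ell_q^n$); after convexifying as in the discussion preceding Lemma~\ref{lem:CCIneq} and applying that lemma, I obtain $\min\{r,s\}\le p,q\le\max\{r,s\}$. The endpoint cases $\infty\in\{p,q,r,s\}$ I would dispatch separately using the presence of $c_0$ and the type/cotype computation of Corollary~\ref{cor:TCLqLp}, and the degenerate case $r=s$ (where $\ell_s(\ell_r)=\ell_r$) forces $p=q=r$ at once; so from now on I assume all indices finite.

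The key step is to pass to the limit. Fixing a nonprincipal ultrafilter $\Ut$ on $\NN$ and forming the ultraproduct of the embeddings and projections above, the coherent unit vectors assemble into a lattice-disjoint, complemented copy of the unit vector system of $\ell_q(\ell_p)$ inside $(\ell_s(\ell_r))_\Ut$. By \cite{LevyRaynaud1984}, exactly as in the proof of Proposition~\ref{prop:motivation}, this ultrapower complementably embeds into some $L_s(\mu_1,L_r(\mu_2))$, and separabilising the sublattice generated by the family yields
\[
\ell_q(\ell_p)=\UU(\ell_p)\trianglelefteq L_s(L_r),\qquad \UU=\ell_q,
\]
realised by a lattice-disjoint sequence. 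The same construction applied to columns produces a lattice-disjoint complemented copy of $\ell_q$ in $L_s(L_r)$.

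Finally I would feed this into the classification machinery. Assuming $p\neq2$ and (having disposed of $s=1$) $s>1$, Theorem~\ref{thm:MixedNormLsLrComp} applied to $\UU(\ell_p)$ with $\UU=\ell_q$ gives $p\in\{r,s\}$, and moreover that $p=s$ forces $\UU=\ell_q$ to be lattice isomorphic to $\ell_p$, i.e.\ $q=p=s$; this already produces the case $p=q=s$ and rules out the transpose $(p,q)=(s,r)$. In the remaining case $p=r$ I would use the disjoint complemented copy of $\ell_q$ together with Corollary~\ref{cor:lpcomplemented} to conclude $q\in\{2,r,s\}$, hence (for $q\neq2$) $q\in\{r,s\}$, giving either $p=q=r$ or $(p,q)=(r,s)$. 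Thus, away from the value $2$, the three stated alternatives are precisely what survives.

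The main obstacle I anticipate is twofold. The delicate point is the limiting step: one must verify that the finite disjoint blocks glue coherently—through the ultraproduct and the Levy--Raynaud representation—into the full unit vector system of $\ell_q(\ell_p)$ with uniform equivalence constants and a genuine bounded projection, and that the ambient space is honestly an $L_s(L_r)$-space after separabilisation. The second difficulty is the exceptional value $2$: when $p=2$ or $q=2$ the cited theorems do not apply, and I would instead argue by hand using the Hilbertian facts \eqref{eq:PelAgain} and \eqref{eq:LinPel} and the isomorphism \eqref{eq:LpLpL_2}, checking directly that the permutative equivalence of the finite systems collapses to one of the three listed relations. The bookkeeping required to exclude $(p,q)=(s,r)$ and to treat the boundary indices $1$ and $\infty$ uniformly is where most of the care lies.
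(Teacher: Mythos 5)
Your main line, for indices away from $1$, $2$, and $\infty$, is essentially the paper's: both arguments pass from the uniformly complemented finite subbases to the relation $(p,q)\preceq(r,s)$, upgrade this to $Z_{p,q}\trianglelefteq L_s(L_r)$ via the ultraproduct machinery of Proposition~\ref{prop:motivation}, and then feed the result into Proposition~\ref{prop:MixedNormLsLr} and Corollary~\ref{cor:lpcomplemented}. (You even spell out one step the paper leaves terse, namely extracting $q\in\{r,s\}$ from the complemented diagonal copy of $\ell_q$.)

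The genuine gap is exactly where you write ``I would instead argue by hand'': the exceptional value $2$ and the endpoints $1$, $\infty$ are never actually handled, and they are not routine. The paper disposes of all of them at the outset with a reduction your proposal is missing: since the hypothesis concerns disjointly supported unit vector systems in discrete lattices, it is stable under convexification $(p,q,r,s)\mapsto(tp,tq,tr,ts)$ (by the discussion preceding Lemma~\ref{lem:CCIneq}), and, since subbases of the unit vector system of $\ell_s(\ell_r)$ are $1$-complemented, it is also stable under duality $(p,q,r,s)\mapsto(p',q',r',s')$. Alternating these two moves, the paper reduces to the case $r\le s$ and $2<p,q,r,s<\infty$, after which the Section~\ref{sec:unconditional} machinery applies with no excluded cases, and the conclusion descends to the original indices. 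Without this reduction your argument is incomplete at precisely the hard spots: for instance, if $q=2$ and $p=r\notin\{2,s\}$, the statement forces $s=2$, but your tools give nothing --- Corollary~\ref{cor:lpcomplemented} is vacuous here because $2\in\Gamma_{r,s}$ for all finite $r$, $s$ (by \eqref{eq:FDp2}), and the Hilbertian facts \eqref{eq:PelAgain}, \eqref{eq:LinPel}, \eqref{eq:LpLpL_2} you propose to use do not decide it, whereas after convexifying so that all indices exceed $2$ the same case is settled in one line by Proposition~\ref{prop:MixedNormLsLr} plus Corollary~\ref{cor:lpcomplemented}. A similar unaddressed difficulty occurs when $p=\infty$ or $q=\infty$: ``presence of $c_0$ and cotype'' only yields $\max\{r,s\}=\infty$, not which of $r$, $s$ is infinite nor the value of the other index; in the paper this case is reduced to the finite one by dualizing.
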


\begin{proof}
Convexifying, we pass, maintaining the order relation between $r$ and $s$, to indices that satisfy $\min\{p,q,r,s\}>1$. Then, dualizing, we pass, reversing the order relation between $r$ and $s$, to indices that satisfy $\max\{p,q,r,s\}<\infty$. Repeating this process if necessary, we infer that we can assume that $r\le s$ and $\max\{p,q,s\}<\infty$. Convexifying, we can also assume that $\min\{p,q,r,s\}>2$. By Proposition~\ref{prop:motivation}, $Z_{p,q} \trianglelefteq L_s(L_r)$. If $p=q$ then, by Corollary~\ref{cor:lpcomplemented}, $p\in\{r,s\}$. Otherwise, by Proposition~\ref{prop:MixedNormLsLr}, $(p,q)=(r,s)$.
\end{proof}

The following result on uniqueness of structure will be essential for us. We say that a Banach space $\XX$ has a \emph{hereditably unique unconditional basis} if it has a normalized unconditional basis $\XB$ and there is a function $\eta\colon[1,\infty)\times[1,\infty) \to [1, \infty)$ such that any normalized $K$-unconditional basic sequence $C$-complemented in $[\XB_0]$ for some subbasis $\XB_0$ of $\XB$ is permutatively $\eta(C,K)$-equivalent to a subbasis of $\XB_0$. By the Schr\"oder--Bernstein principle for unconditional bases \cite{Wojtowicz1988}, if $\XX$ has hereditably unique unconditional basis, then any complemented subspace of $\XX$ with an unconditional basis has a unique normalized unconditional basis up to equivalence and permutation.

\begin{theorem}[see \cite{BCLT1985}*{Theorem 2.2 and Theorem 4.7}]\label{thm:BCLZ}
$\ell_1(\ell_2)$, $\ell_1(c_0)$ and $c_0(\ell_1)$ have a hereditably unique unconditional basis.
\end{theorem}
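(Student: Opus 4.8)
The statement is the uniform, hereditary uniqueness theorem of Bourgain, Casazza, Lindenstrauss and Tzafriri \cite{BCLT1985}, and the plan below outlines how I would recover it rather than merely cite it. In each of the three cases the ambient space has the form $(\oplus_i X_i)_Y$ with $Y\in\{\ell_1,c_0\}$ and $X_i$ a fixed copy of $\ell_1$, $\ell_2$ or $c_0$, so it carries a canonical unconditional finite–dimensional decomposition adapted to a two–level Banach lattice structure. The overarching strategy is the classical three–step scheme: first, reduce an arbitrary normalized $K$-unconditional basic sequence that is $C$-complemented in the span of a subbasis to a lattice–disjointly supported sequence, at a cost depending only on $K$ and $C$; second, identify the local structure of the disjoint blocks, which must reproduce the unit vector basis of the relevant inner factor $X_i$ or of the outer sum $Y$; and third, carry out a combinatorial matching producing the permutation together with a two–sided bound $\eta(C,K)$.

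For the first step the idea is that, relative to the canonical unconditional decomposition, a complemented unconditional basic sequence can be perturbed into a lattice–disjoint block sequence while keeping all constants controlled by $K$ and $C$; the complementation hypothesis is what lets one transport the projecting functionals along the perturbation. This is precisely the role played, in the setting of Section~\ref{sec:unconditional}, by the disjointification Lemmas~\ref{lem:Ulq} and~\ref{lem:UlqDisjoint} together with the small–perturbation principle, read off now for the counting measure. The delicate point is that in the $\ell_1$-factors there are no normalized weakly null sequences, so the disjointification must be performed lattice–theoretically rather than by a naive gliding hump; this is where the rigidity of the $\ell_1$- and $c_0$-sum structures must be exploited from the outset.

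In the second step, once the blocks are lattice–disjoint I would separate the inner and outer directions. A block supported inside a single factor $X_i$ spans, by the uniqueness of the unconditional basis of $\ell_1$, $\ell_2$ or $c_0$, a space uniformly equivalent to that $X_i$; a block spread across many factors is pinned down by the extremal $\ell_1$- or $c_0$-summing structure of $Y$. The sequence–space analogues of the type–cotype thresholds recorded in Corollary~\ref{cor:TCLqLp}, together with lattice convexity and concavity, forbid any block from spanning an $\ell_p$ with $p$ outside the admissible set, and it is this that rigidifies the decomposition: every disjoint block is, up to a uniform constant, a canonical unit vector sitting in a definite row and column.

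The genuinely hard part is the third step: producing a single permutation that simultaneously respects the two–level structure and gives a bound $\eta(C,K)$ independent of the particular sequence. One must rule out the pathological recombination in which finitely many inner $\ell_p$-directions are reorganized into outer directions, or conversely; this is the combinatorial heart of \cite{BCLT1985}, resting on a careful count of how blocks distribute over rows and columns and on a stability argument upgrading \emph{equivalent to a subbasis} to \emph{permutatively equivalent with a uniform constant}. Organizing the two directions of domination into a single permutative equivalence proceeds in the spirit of the Schröder–Bernstein principle for unconditional bases \cite{Wojtowicz1988}. The persistent technical burden throughout is the uniformity of every constant, which is exactly what separates this hereditary statement from the plain assertion of uniqueness up to permutation.
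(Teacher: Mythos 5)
There is a genuine gap, and it is structural rather than a fixable detail. The paper does not prove this statement at all: Theorem~\ref{thm:BCLZ} is imported wholesale from \cite{BCLT1985} as a black box, and the deep content (hereditary uniqueness with a uniform function $\eta(C,K)$) is exactly what that memoir establishes. Your proposal does not close this gap either: your ``third step'' --- producing a single permutation with a bound $\eta(C,K)$ and ruling out recombination of inner and outer directions --- is precisely the combinatorial core of the theorem, and you explicitly defer it back to \cite{BCLT1985} (``this is the combinatorial heart of \cite{BCLT1985}\dots''). An argument whose decisive step consists of citing the result being proved is not a proof; what you have written is an annotated table of contents for the BCLT argument, which is legitimate as a reading guide but does not discharge the statement.

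Moreover, the specific machinery you propose to borrow from Section~\ref{sec:unconditional} does not apply to these three spaces. Lemmas~\ref{lem:Ulq} and~\ref{lem:UlqDisjoint} are stated for absolutely continuous function norms over \emph{finite} measure spaces, and their disjointification mechanism (Lemma~\ref{lem:Embedding}) extracts sets $A_n$ with $\mu(A_n)\le\varepsilon_n$; over the counting measure every nonempty set has measure at least $1$, so this device is vacuous for pure sequence spaces such as $\ell_1(\ell_2)$, $\ell_1(c_0)$ and $c_0(\ell_1)$. The hypotheses also fail intrinsically: Lemma~\ref{lem:Ulq} requires $2<p<\infty$ and a value space of nontrivial cotype (excluded for $c_0$), Lemma~\ref{lem:UlqDisjoint} requires $1\le p<2$ with type strictly greater than $p$ (excluded for $\ell_1$-valued factors), and Corollary~\ref{cor:TCLqLp} concerns the function spaces $L_q(L_p)$, not their sequence-space analogues. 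In the limiting cases $p\in\{1,\infty\}$ the type/cotype dichotomies you invoke degenerate, which is exactly why \cite{BCLT1985} needs different, genuinely combinatorial tools there. If you want a self-contained treatment, you would have to reproduce that argument; within this paper the honest proof of Theorem~\ref{thm:BCLZ} is, and can only be, the citation.
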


\begin{proof}
Although the statements of these theorems of the \emph{Memoir} omit the constants involved in the isomorphisms, the quantification we propose is implicit within their proofs (see \cite{BCLT1985}*{Proposition 2.1 and Proposition 4.1}).
\end{proof}

We shall infer Theorems~\ref{thm:CM}, \ref{thm:AA} and \ref{thm:Main} from the following result concerning finite-dimensional structure.
\begin{theoremintro}\label{thm:MainFD}
Let $1\le p,q,r,s\le\infty$. Then $(p,q) \simeq (r,s)$ if and only if either $(p,q)=(r,s)$ or $1<q=s<\infty$ and $\{p,r\}=\{2,q\}$.
\end{theoremintro}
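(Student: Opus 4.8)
The plan is to prove both implications of Theorem~\ref{thm:MainFD}, treating the equivalence relation $\simeq$ as a conjunction of two $\preceq$ relations and exploiting the symmetries built into the problem. Recall that $(p,q) \preceq (r,s)$ means $(\ell_q^n(\ell_p^n))_{n=1}^\infty \trianglelefteq (\ell_s^{\nu(n)}(\ell_r^{\nu(n)}))_{n=1}^\infty$ for some $\nu$, and that a complemented embedding at the finite-dimensional level is precisely a uniform permutative equivalence to a complemented subbasis. So the proof reduces to analyzing when the unit vector system of $\ell_q^n(\ell_p^n)$ sits, uniformly and complementably, inside $\ell_s(\ell_r)$.

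For the easy direction, I would verify that $(p,q)=(r,s)$ trivially gives $\simeq$, and that $1<q=s<\infty$ with $\{p,r\}=\{2,q\}$ also gives $\simeq$. The latter follows from Pe\l czy\'nski's result \eqref{eq:FDp2}, namely $(\ell_2^n)_{n=1}^\infty \trianglelefteq (\ell_q^{2^n})_{n=1}^\infty$ for $1<q<\infty$, vectorized in the outer $\ell_q$-variable. Concretely, taking $\XX_n = \ell_q^n$ and applying the uniform complemented embedding $\ell_2^n \trianglelefteq \ell_q^{2^n}$ coordinatewise yields $(\ell_q^n(\ell_2^n))_{n=1}^\infty \trianglelefteq (\ell_q^{\nu(n)}(\ell_q^{\nu(n)}))_{n=1}^\infty$, giving $(2,q)\preceq(q,q)$; the reverse $(q,q)\preceq(2,q)$ is immediate since $\ell_q^n$ embeds isometrically and complementably as a diagonal in $\ell_q^n(\ell_2^n)$. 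Hence $(2,q)\simeq(q,q)$, and by the symmetry of the statement in $p$ and $r$ this covers the whole case $\{p,r\}=\{2,q\}$.

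The hard direction is the converse: assuming $(p,q)\simeq(r,s)$, I must deduce that the pairs coincide or fall into the exceptional case. The key tool is Lemma~\ref{lem:TABases}, which already does most of the heavy lifting: if the unit vector system of $\ell_q^n(\ell_p^n)$ is uniformly permutatively equivalent to a \emph{subbasis} of $\ell_s(\ell_r)$, then $p=q=r$, $p=q=s$, or $(p,q)=(r,s)$. Since $(p,q)\preceq(r,s)$ gives exactly such a uniform complemented (hence subbasis) equivalence, Lemma~\ref{lem:TABases} applies and yields one of these three alternatives; by symmetry the relation $(r,s)\preceq(p,q)$ gives the analogous trichotomy with the roles swapped. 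The plan is to combine these two trichotomies case by case. When $(p,q)=(r,s)$ we are done. Otherwise one of the degenerate alternatives $p=q\in\{r,s\}$ must hold, and symmetrically; I would analyze the finitely many remaining configurations, using Corollary~\ref{cor:lpcomplemented} to control which $\ell_p$ complementably embed into $L_s(L_r)$ and hence to force the surviving index to equal $2$ and to force $q=s$ into the range $(1,\infty)$.

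I expect the main obstacle to be the bookkeeping in the final case analysis, specifically pinning down that the exceptional case requires \emph{both} $q=s$ \emph{and} one of $p,r$ to equal $2$ while the other equals $q$, rather than some spurious collapse at the endpoints. The delicate point is ruling out configurations where, say, $p=q$ but $r\neq s$, or where the value $2$ fails to appear: here I would lean on Corollary~\ref{cor:lpcomplemented}, which asserts $\Gamma_{1,1}=\{1\}$ and $\Gamma_{r,s}=\{2,r,s\}$ when $s>1$ or $1<r\le 2$, to show that a diagonal $\ell_q$ inside $\ell_q(\ell_p)$ with $p\neq q$ forces $q$ (equivalently the common outer index) to coincide on both sides and the inner index to be $2$. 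Combined with the requirement $1<q=s<\infty$ coming from the fact that $\ell_2$ complementably embeds in $L_q$ only in that range (via \eqref{eq:PelAgain} and \eqref{eq:LinPel}), this closes the argument.
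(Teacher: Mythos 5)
Your easy direction is essentially correct: the vectorized Pe{\l}czy\'{n}ski embedding \eqref{eq:FDp2} gives $(2,q)\preceq(q,q)$, and the diagonal of $\ell_q^{n^2}(\ell_2^{n^2})$, which is isometric to $\ell_q^{n^2}=\ell_q^n(\ell_q^n)$ and $1$-complemented via the projection $f=(f_i)_i\mapsto\bigl(\langle f_i,\ee_i\rangle \ee_i\bigr)_i$, gives $(q,q)\preceq(2,q)$. The hard direction, however, rests on a false identification. You assert that a uniform complemented embedding $(\ell_q^n(\ell_p^n))_n \trianglelefteq (\ell_s^{\nu(n)}(\ell_r^{\nu(n)}))_n$ \emph{is precisely} a uniform permutative equivalence of the unit vector system of $\ell_q^n(\ell_p^n)$ to a subbasis of the unit vector system of $\ell_s(\ell_r)$, and on that basis you feed $(p,q)\preceq(r,s)$ directly into Lemma~\ref{lem:TABases}. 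A complemented embedding need not send unit vectors to anything resembling unit vectors of the target: Pe{\l}czy\'{n}ski's embedding \eqref{eq:FDp2} itself is the standard counterexample, since $(\ell_2^n)_n \trianglelefteq (\ell_q^{2^n})_n$ for $1<q<\infty$, yet any finite subbasis of the unit vector system of $\ell_q$ is isometrically an $\ell_q^k$-basis, so the $\ell_2^n$-bases are not uniformly permutatively equivalent to subbases. Worse, the trichotomy you extract is contradicted by the exceptional case you have just verified: for $(p,q)=(2,q)$ and $(r,s)=(q,q)$ with $1<q<\infty$, $q\neq2$, we do have $(2,q)\simeq(q,q)$, but none of the alternatives $p=q=r$, $p=q=s$, $(p,q)=(r,s)$ holds. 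So if your reduction were valid, Lemma~\ref{lem:TABases} would prove $(2,q)\not\preceq(q,q)$, contradicting your own first step. The proposal is internally inconsistent, and the inconsistency sits exactly at the claim that $\preceq$ yields a subbasis equivalence.

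This gap cannot be repaired by bookkeeping; it is the reason the paper's proof is structured the way it is. There, $(p,q)\preceq(r,s)$ with $\max\{r,s\}<\infty$ is upgraded via Proposition~\ref{prop:motivation} to a complemented embedding of \emph{function} spaces $L_q(L_p)\trianglelefteq L_s(L_r)$, and the conclusions are then extracted with the disjointification and convexification machinery (Theorem~\ref{thm:MixedNormLsLrComp} and Corollary~\ref{cor:lpcomplemented}), which is built to handle arbitrary complemented embeddings rather than basis-respecting ones; duality and a partition of the index set according to whether $\min\{p,q\}=1$ and $\max\{p,q\}=\infty$ reduce everything to finitely many endpoint configurations. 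Lemma~\ref{lem:TABases} enters only in those endpoint cases --- separating $(1,\infty)$ from $(\infty,1)$, and $(1,2)$ from $(2,1)$ --- and there its use is legitimate only because Theorem~\ref{thm:BCLZ} (hereditary uniqueness of the unconditional bases of $c_0(\ell_1)$, $\ell_1(c_0)$ and $\ell_1(\ell_2)$) is precisely the statement that, for those particular target spaces, every complemented normalized unconditional basic sequence \emph{is} permutatively equivalent to a subbasis. You never invoke Theorem~\ref{thm:BCLZ}, and no such uniqueness theorem is available for a general $\ell_s(\ell_r)$; without it, the passage from $\preceq$ to the hypothesis of Lemma~\ref{lem:TABases} is simply unavailable, and your hard direction collapses.
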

\begin{proof}
Set $D=(1,\infty)\setminus\{2\}$. In light of \eqref{eq:FDp2}, it suffices to prove that the equivalence relation $\simeq$ is trivial on
\[
\Ct=[1,\infty]^2\setminus \enbrace{(p,p) \colon p\in D}.
\]
We say that a family consisting of pairwise disjoint subsets of $\Ct$ agrees with $\simeq$ if related pairs belong to the same set. Proposition~\ref{prop:motivation} gives that $(p,q)\preceq (r,s)$ and $\max\{r,s\}<\infty$ imply $\max\{p,q\}<\infty$. By Lemma~\ref{lem:AnsoAuxDual}, $(p,q)\preceq (r,s)$ and $\min\{r,s\}>1$ imply $\min\{p,q\}>1$. Hence, the partition $(\Ct_i)_{i=1}^4$ of $\Ct$ given by
\begin{align*}
\Ct_1&=\enbrace{(p,q)\in\Ct \colon \min\{p,q\}>1, \, \max\{p,q\}<\infty},\\
\Ct_2&=\enbrace{(p,q)\in\Ct \colon \min\{p,q\}=1, \, \max\{p,q\}<\infty},\\
\Ct_3&= \enbrace{(p,q)\in\Ct\colon \min\{p,q\}>1, \, \max\{p,q\}=\infty},\\
\Ct_4&= \enbrace{(p,q)\in\Ct \colon \min\{p,q\}=1, \, \max\{p,q\}=\infty}=\enbrace{(1,\infty),(\infty,1)},
\end{align*}
agrees with $\simeq$. Moreover, telling apart the pairs in $\Ct_2$ would lead to telling apart the pairs in $\Ct_3$. As far as $\Ct_4$ is concerned, combining Theorem~\ref{thm:BCLZ} with Lemma~\ref{lem:TABases} gives that neither $(1,\infty)\preceq (\infty,1)$ nor $(\infty,1) \preceq (1,\infty)$. So, it suffices to tell apart the pairs in $\Ct_0:=\Ct_1\cup \Ct_{2}$. To that end, we set
\[
\Jt=\enbrace{A\subseteq[1,\infty) \colon \abs{A} = 2}, \quad \Jt_{0}=\Jt\cup\{\{1\},\{2\}\},
\]
and, for $A\in \Jt_0$,
\[
\Ct_A=\enbrace{(p,q)\in\Ct_0 \colon \{p,q\}=A}.
\]
By Proposition~\ref{prop:motivation} and Corollary~\ref{cor:lpcomplemented}, the partition $(\Ct_A)_{A\in\Jt_0}$ of $\Ct_0$ agrees with $\simeq$. Since $\Ct_{\{1\}}$ and $\Ct_{\{2\}}$ are singletons, it remains to prove that $(s,r)\not\simeq (r,s)$ for all $1\le r <s<\infty$.

Assume by contradiction that $(s,r)\preceq (r,s)$ and $(r,s)\preceq (s,r)$. The former condition gives, by Proposition~\ref{prop:motivation} and Theorem~\ref{thm:MixedNormLsLrComp}, $s=2$. Similarly, if $r>1$ the latter condition would give $r=2$. Consequently, $r=1$. Therefore, $(1,2)\preceq(2,1)$. Since this assertion contradicts the combination of Lemma~\ref{lem:TABases} with Theorem~\ref{thm:BCLZ}, we are done.
\end{proof}

\begin{proof}[Proof of Theorem~\ref{thm:CM}, Theorem~\ref{thm:AA} and Theorem~\ref{thm:Main}]
Note that, since $\ell_p$ is $\ell_p$-saturated (see \cite{Pel1960}*{Lemma 2}), $\ell_p(\ell_2)$ and $\ell_p$ are not isomorphic unless $p=2$. Taking this into account, as well as \eqref{eq:LpLpL_2} and \eqref{eq:Bpqlq}, the result follosws from combining Theorem~\ref{thm:MainFD} with Proposition~\ref{prop:motivation}.
\end{proof}
%-----------------------------------
\section{Embeddings of \texorpdfstring{$\ell_p$}{}-spaces into \texorpdfstring{$L_r(L_s)$}{}-spaces}\label{sec:Raynaud}\noindent
%-----------------------------------
One of the main tools for studying the geometry of a given Banach space $\XX$ is to determine the structure of its basic sequences. Within this area, the premier task is determining for which values of $p$ the sequence space $\ell_p$ is a subspace of $\XX$. In this section, we use the results achieved in Section~\ref{sec:unconditional} to address this question for mixed-norm Lebesgue spaces.

Recall that, given $1\le r\le 2$, $\ell_p \sqsubseteq L_r$ if and only if $p\in [r,2]$ (see \cite{Kadec1958}), while, given $2<r<\infty$, $\ell_p \sqsubseteq L_r$ if and only if $p\in \{2,r\}$ (see \cite{Paley1936}). Also recall that $\ell_p \sqsubseteq L_\infty$ for all $1\le p\le \infty$ (see \cite{Banach1932}).

We start by giving an alternative proof of a result by Raynaud.

\begin{theorem}[\cite{Raynaud1985}*{Theorem 1}]\label{thm:Raynaud1}
Let $1\le r\le s<\infty$. Let $1\le p<\infty$. Then $\ell_p\sqsubseteq L_s(L_r)$ if and only if $\ell_p\sqsubseteq L_s$ or $\ell_p\sqsubseteq L_r$.
\end{theorem}

\begin{proof}
We have to prove the `only if' part. Suppose that $\ell_p\sqsubseteq L_s(L_r)$. If $1\le p \le 2$ then, by Theorem~\ref{thm:TCLqLp}, $p\ge r$. So in this case $\ell_p \sqsubseteq L_r$. Assume that $p>2$. Then, by Proposition~\ref{prop:MixedNormLsLr}, either $p=r$ or $p=s$, and the statement follows.
\end{proof}

In the case when $1\le s<r<\infty$, our techniques give the following:
\begin{enumerate}[label=(\alph*)]
\item\label{it:s<r:p<2} If $1\le p<2$, then $\ell_p\sqsubseteq L_s(L_r)$ if and only if $p\ge s$.
\item\label{it:s<r:p>2} If $p>2$, then $\ell_p\sqsubseteq L_s(L_r)$ implies that $s\le p \le r$.
\end{enumerate}
Indeed, if $p<2$ and $\ell_p\sqsubseteq L_s(L_r)$ then, by Theorem~\ref{thm:TCLqLp}, $\min\{2,s\}\le p$. Consequently, $s<2$ and $s\le p$. This proves \ref{it:s<r:p<2}. In turn, \ref{it:s<r:p>2} follows from Proposition~\ref{prop:MixedNormLsLr}. We point out that \cite{Raynaud1985} overrides \ref{it:s<r:p<2} and \ref{it:s<r:p>2}, and includes an embedding that our methods miss. Namely, if $s\ge 1$ and $\max\{2,s\}<p<r<\infty$, then $\ell_p\sqsubseteq L_s(L_r)$ (see Theorem~\ref{thm:Raynaud3} below).

We turn to the study of complemented embeddability. Recall that $\ell_p \trianglelefteq L_\infty$ if and only if $p=\infty$ (see \cite{Lin1967}), $\ell_p \trianglelefteq L_1$ if and only if $p=1$, and, given $1<r<\infty$, $\ell_p \trianglelefteq L_r$ if and only if $p\in\{2,r\}$ (see \cite{KadPel1962}). Our techniques give the following result, for whose statement and proof we define $A'=\enbrace{p' \colon p\in A}$ for all $A\subseteq[1,\infty]$.

\begin{theorem}\label{thm:CompSpectrum}
Given $r$, $s\in[1,\infty]$, the set $\Gamma_{r,s}$ satisfies the following.
\begin{enumerate}[label=(\alph*)]
\item If $1<s<\infty$, then $\Gamma_{r,s}= \{r,s,2\}$.
\item If $1<r\le 2$, then $\Gamma_{r,1}= \{r,1,2\}$.
\item If $2<r<\infty$, then $ \{r,1,2\}\subseteq\Gamma_{r,1}\subseteq \{1\} \cup [2,r]$.
\item If $1<r<\infty$, then $\Gamma_{r,\infty} = \enpar{\Gamma_{r',1}}'$.
\item $\Gamma_{\infty,1}=\Gamma_{1,\infty}=\{1,\infty\}$.
\end{enumerate}
\end{theorem}

\begin{proof}
Corollary~\ref{cor:lpcomplemented} gives the result in the case when $A:=\{r,s\}\subseteq [1,\infty)$. To settle the case $A\subseteq (1,\infty]$, we will prove that
\[
\Gamma_{r,s} \subseteq \enpar{\Gamma_{r',s'}}', \quad r,s\in(1,\infty], \, (r,s)\not=(1,\infty).
\]
Indeed, if $p\in \Gamma_{r,s}$, then $(p,p)\preceq (r,s)$ by Proposition~\ref{prop:motivation}. By Lemma~\ref{lem:AnsoAuxDual}, $(p',p')\preceq (r',s')$. Applying again Proposition~\ref{prop:motivation} gives $p' \in \Gamma_{r',s'}$.

If $1<s<\infty$ and $r=\infty$, we obtain $\Gamma_{r,s} \subseteq \{2,r,s\}$, and the reverse inclusion is clear. In turn, if $s=\infty$ and $1<r<\infty$, then $\enpar{\Gamma_{r',s'}}'\subseteq \Gamma_{r,s}$ by duality.

The case where $A=\{1,\infty\}$ remains to be dealt with. Suppose by contradiction that $p\in \Gamma_{r,s} \cap(1,\infty)$. By \eqref{eq:FDp2}, Proposition~\ref{prop:motivation} and Lemma~\ref{lem:AnsoAuxDual}, $(2,2)\preceq (\infty,1)$ and $(2,2)\preceq (1,\infty)$. Consequently, there are linear operators $L_n\colon\ell_2^n \to \XX:=c_0(\ell_1)$ and $Q_n \colon \XX\to \ell_2^n$, $n\in\NN$, such that $\sup_n \norm{L_n} \norm{P_n} <\infty$ and $Q_n \circ L_n=\Id_{\ell_2^n}$ for all $n\in\NN$. Following the terminology from \cite{CasKal1999}, this means that $\XX$ is sufficiently Euclidean. However, it is known (see \cite{CasKal1999}*{Corollary 2.5}) that it is not the case.
\end{proof}

We close the paper with the problem that Theorem~\ref{thm:CompSpectrum} leaves open.

\begin{question}
Suposse that $2<p<r<\infty$. Does $\ell_p \trianglelefteq L_1(L_r)$?
\end{question}

\section{Appendix}\label{sec:appendix}\noindent
%-----------------------------------
We give a proof of the isomorphic distinction of the separable $L_q(L_p)$ spaces that are not obviously isomorphic, except within each of the classes $\Rt_1$ and $\Rt_2$ mentioned in Section~\ref{sect:intro}, that relies on results from \cite{Raynaud1985} about the embeddability of $\ell_p$-spaces and $Z_{p,q}$-spaces into mixed-norm Lebesgue spaces. Namely, we will use Theorem~\ref{thm:Raynaud1} and the following one.

\begin{theorem}[\cite{Raynaud1985}*{Theorem 3}]\label{thm:Raynaud3}
Let
{\Anso $1\le s\le r<\infty$}
and $1\le p,q<\infty$. Then $Z_{p,q}$ embeds in $L_s(L_r)$ if and only if one of the following holds:
\begin{itemize}
\item $s\le q\le p\le \max\{r,2\}$ or
\item $s\le q\le r$ and $p=2$.
\end{itemize}
\end{theorem}

We will also use well-known results on the complemented embeddability of $\ell_r$-spaces into $L_p$-spaces (see \eqref{eq:PelAgain} and \eqref{eq:LinPel}), and standard results about $L_q(L_p)$-spaces. Specifically, we will use duality (see Theorem~\ref{thm:dualLpLq}), and Rademacher type and cotype (see Theorem~\ref{thm:TCLqLp}).

\begin{definition}
Let $(r,s),(p,q)\in[1,\infty)^2$. We say that the pairs $(r,s)$ and $(p,q)$ \emph{match} if $(p,q)=(r,s)$ or $q=s\in(1,\infty)$ and $\{p,r\}=\{s,2\}$.
\end{definition}

Clearly, the notion of matching pairs defines an equivalence relation on $[1,\infty)^2$.

We fix $(r,s),(p,q)\in[1,\infty)^2$ and henceforth assume $L_s(L_r)\simeq L_q(L_p)$. We will see that, unless these spaces are non-reflexive and
\[
\max\{p,q,r,s\}\le2,
\]
then these pairs match.

\begin{lemma}\label{lem:appendix1}
If $r=s$ or $p=q$, then $(r,s)$ and $(p,q)$ match.
\end{lemma}

\begin{proof}
Assume that $r=s$ and, thus, $L_s(L_r)\simeq L_s$. If $s=1$, then $L_q$ and $L_p$ are complemented in $L_1$. Hence, $s=r=p=q=1$, and we are done. If $s>1$, then the complementation of $L_q$ and $L_p$ in $L_s$ yields $\{p,q\}\subseteq\{s,2\}$. If $p=q=2$, then $L_s\simeq L_2$. Consequently, $s=2$, and we are done. Therefore, unless the pairs match, $(p,q)=(2,s)$ and $s\ne 2$. We obtain $L_s\simeq L_2(L_s)$. By duality, there is $1<t<2$, such that $L_t(L_2) \simeq L_t \simeq L_2(L_t)$. Hence, $Z_{t,2}\sqsubseteq L_t(L_2)$. Applying \cite{Raynaud1985}*{Theorem 3} we reach an absurdity.
\end{proof}

\begin{lemma}\label{lem:appendix0}
If $q>1$ and $p=2$, then $(r,s)$ and $(p,q)$ match.
\end{lemma}

\begin{proof}
We have $L_q(L_q) \simeq L_q(L_p) \simeq L_s(L_r)$. By Lemma~\ref{lem:appendix1} and transitivity, $(p,q)$ and $(r,s)$ match.
\end{proof}

We will say that $(r,s)$ and $(p,q)$ have the same order type if $s-r$ and $q-p$ are either both nonnegative or both nonpositive.

\begin{lemma}\label{lem:appendix2}
If the pairs $(r,s)$ and $(p,q)$ do not match, then they have the same order type.
\end{lemma}

\begin{proof}
Assume that the conclusion is false. Applying Lemma~\ref{lem:appendix1}, and swapping the pairs if necessary, we infer that $1\le s<r$ and $1\le p<q$. Since $Z_{p,q}\sqsubseteq L_s(L_r)$, applying \cite{Raynaud1985}*{Theorem 3} gives that $s\le q\le r$ and $p=2$. We conclude the proof by applying Lemma~\ref{lem:appendix0}.
\end{proof}

\begin{proposition}
If $L_s(L_r)$ is reflexive, then the pairs $(r,s)$ and $(p,q)$ match.
\end{proposition}

\begin{proof}
Assume by contradiction that the pairs do not match. By Lemma~\ref{lem:appendix2}, they have the same order type. By duality, we may assume that $1<s\le r$ and $1<q\le p$. By Lemma~\ref{lem:appendix0}, $2\notin\{p,r\}$. Applying Theorem~\ref{thm:Raynaud3} twice, once to $(r,s)$ and $(p,q)$ and once to the pairs swapped, we get
\[
s\le q\le p\le \max\{r,2\}, \quad
{\Anso q\le s\le r \le \max\{p,2\}.}
\]
This means that $q=s$, and either $p=r>2$ or $\max\{p,r\}<2$. In the latter case, dualizing and applying Theorem~\ref{thm:Raynaud3} gives $p'=r'$. So, in both cases we obtain that $(r,s)=(p,q)$. This absurdity proves the result.
\end{proof}

\begin{proposition}
If $L_s(L_r)$ is non-reflexive and $\max\{p,q,r,s\}>2$, then the pairs $(r,s)$ and $(p,q)$ match.
\end{proposition}

\begin{proof}
Looking at the cotype of the spaces we infer that
\[
\min\{p,q\}=\min\{r,s\}=1, \quad t:=\max\{p,q\}=\max\{r,s\}>2.
\]
Assume by contradiction that the pairs do not match. By Lemma~\ref{lem:appendix2}, either $(p,q)=(r,s)=(1,t)$ or $(p,q)=(r,s)=(t,1)$. This absurdity puts an end to the proof.
\end{proof}

\begin{proposition}
If $L_s(L_r)$ is non-reflexive and $\max\{p,q,r,s\}\le 2$, then either $p=r=1$ or $q=s=1$.
\end{proposition}

\begin{proof}
If the pairs $(r,s)$ and $(p,q)$ math, then $(r,s)=(p,q)$. Otherwise, $(p,q)$ and $(r,s)$ have the same order type by Lemma~\ref{lem:appendix2}. In both cases, the conclusion holds.
\end{proof}
% ------------------------------------------------------------------------
\section*{Acknowledgement}\noindent
% ------------------------------------------------------------------------
The authors would like to thank an anonymous referee for constructive feedback that helped improve the manuscript.
% ------------------------------------------------------------------------
\section*{Statements and Declarations}\noindent
% ------------------------------------------------------------------------
\subsection*{Conflict of interest}
The authors have no competing interests to declare that are relevant to the content of this article.

\subsection*{Data Availability}
Data sharing not applicable to this article as no datasets were generated or analysed during the current study.
% ------------------------------------------------------------------------
%\bibliography{BiblioLpLq}
%\end{document}
% ------------------------------------------------------------------------
% \bib, bibdiv, biblist are defined by the amsrefs package.
\begin{bibdiv}
\begin{biblist}

\bib{AlbiacAnsorena2015}{article}{
author={Albiac, Fernando},
author={Ansorena, Jos\'{e}~L.},
title={On the mutually non isomorphic {$\ell_p(\ell_q)$} spaces, {II}},
date={2015},
ISSN={0025-584X},
journal={Math. Nachr.},
volume={288},
number={1},
pages={5\ndash 9},
url={https://doi-org/10.1002/mana.201300161},
review={\MR{3310494}},
}

\bib{AlbiacAnsorena2016b}{article}{
author={Albiac, Fernando},
author={Ansorena, Jos\'{e}~L.},
title={The isomorphic classification of {B}esov spaces over {$\mathbb{R}^d$} revisited},
date={2016},
ISSN={2662-2033},
journal={Banach J. Math. Anal.},
volume={10},
number={1},
pages={108\ndash 119},
url={https://doi-org/10.1215/17358787-3336542},
review={\MR{3453526}},
}

\bib{AlbiacAnsorena2017}{article}{
author={Albiac, Fernando},
author={Ansorena, Jos\'{e}~L.},
title={Isomorphic classification of mixed sequence spaces and of {B}esov spaces over {$[0,1]^d$}},
date={2017},
ISSN={0025-584X},
journal={Math. Nachr.},
volume={290},
number={8-9},
pages={1177\ndash 1186},
url={https://doi.org/10.1002/mana.201600236},
review={\MR{3666992}},
}

\bib{AlbiacKalton2016}{book}{
author={Albiac, Fernando},
author={Kalton, Nigel~J.},
title={Topics in {B}anach space theory},
edition={Second Edition},
series={Graduate Texts in Mathematics},
publisher={Springer, [Cham]},
date={2016},
volume={233},
ISBN={978-3-319-31555-3; 978-3-319-31557-7},
url={https://doi.org/10.1007/978-3-319-31557-7},
note={With a foreword by Gilles Godefroy},
review={\MR{3526021}},
}

\bib{AnsorenaBello2022}{article}{
author={Ansorena, Jos\'{e}~L.},
author={Bello, Glenier},
title={Toward an optimal theory of integration for functions taking values in quasi-{B}anach spaces},
date={2022},
ISSN={1578-7303},
journal={Rev. R. Acad. Cienc. Exactas F\'{\i}s. Nat. Ser. A Mat. RACSAM},
volume={116},
number={2},
pages={Paper No. 85, 38},
url={https://doi-org/10.1007/s13398-022-01230-8},
review={\MR{4396837}},
}

\bib{AnsorenaBello2025}{article}{
author={Ansorena, Jos\'{e}~L.},
author={Bello, Glenier},
title={Unconditional basic sequences in function spaces with applications to {O}rlicz spaces},
date={2025},
ISSN={1385-1292},
journal={Positivity},
volume={29},
number={1},
pages={Paper No. 1, 36},
url={https://doi.org/10.1007/s11117-024-01093-w},
review={\MR{4822119}},
}

\bib{Banach1932}{book}{
author={Banach, Stefan},
title={Th\'{e}orie des op\'{e}rations lin\'{e}aires},
publisher={\'{E}ditions Jacques Gabay, Sceaux},
date={1993},
ISBN={2-87647-148-5},
note={Reprint of the 1932 original},
review={\MR{1357166}},
}

\bib{BennettSharpley1988}{book}{
author={Bennett, Colin},
author={Sharpley, Robert},
title={Interpolation of operators},
series={Pure and Applied Mathematics},
publisher={Academic Press, Inc., Boston, MA},
date={1988},
volume={129},
ISBN={0-12-088730-4},
review={\MR{928802}},
}

\bib{BP1958}{article}{
author={Bessaga, C.},
author={Pe{\l}czy\'{n}ski, Aleksander},
title={On bases and unconditional convergence of series in {B}anach
spaces},
date={1958},
ISSN={0039-3223},
journal={Studia Math.},
volume={17},
pages={151\ndash 164},
url={https://doi-org/10.4064/sm-17-2-151-164},
review={\MR{115069}},
}

\bib{BCLT1985}{article}{
author={Bourgain, Jean},
author={Casazza, Peter~G.},
author={Lindenstrauss, Joram},
author={Tzafriri, Lior},
title={Banach spaces with a unique unconditional basis, up to
permutation},
date={1985},
ISSN={0065-9266},
journal={Mem. Amer. Math. Soc.},
volume={54},
number={322},
pages={iv+111},
url={https://doi-org/10.1090/memo/0322},
review={\MR{782647}},
}

\bib{CasKal1999}{article}{
author={Casazza, Peter~G.},
author={Kalton, Nigel~J.},
title={Uniqueness of unconditional bases in {$c_0$}-products},
date={1999},
ISSN={0039-3223},
journal={Studia Math.},
volume={133},
number={3},
pages={275\ndash 294},
review={\MR{1687211}},
}

\bib{CembranosMendoza2011}{article}{
author={Cembranos, Pilar},
author={Mendoza, Jos\'{e}},
title={On the mutually non isomorphic {$\ell_p(\ell_q)$} spaces},
date={2011},
ISSN={0025-584X},
journal={Math. Nachr.},
volume={284},
number={16},
pages={2013\ndash 2023},
url={https://doi-org/10.1002/mana.201010056},
review={\MR{2844675}},
}

\bib{Daher2013}{article}{
author={Daher, Mohammad},
title={{$L^p(G,X^\ast)$} comme sous-espace compl\'{e}ment\'{e} de
{$L^q(G,X)^\ast$}},
date={2013},
ISSN={0010-1354,1730-6302},
journal={Colloq. Math.},
volume={131},
number={2},
pages={273\ndash 286},
url={https://doi.org/10.4064/cm131-2-9},
review={\MR{3092456}},
}

\bib{Daher2014}{article}{
author={Daher, Mohammad},
title={Une remarque sur les sous-espaces compl\'{e}ment\'{e}s de {$VB^p(\mu,X)$}},
date={2014},
ISSN={1631-073X,1778-3569},
journal={C. R. Math. Acad. Sci. Paris},
volume={352},
number={1},
pages={43\ndash 49},
url={https://doi.org/10.1016/j.crma.2013.10.036},
review={\MR{3150767}},
}

\bib{DeVorePopov1988}{article}{
author={DeVore, Ronald~A.},
author={Popov, Vasil~A.},
title={Interpolation of {B}esov spaces},
date={1988},
ISSN={0002-9947,1088-6850},
journal={Trans. Amer. Math. Soc.},
volume={305},
number={1},
pages={397\ndash 414},
url={https://doi.org/10.2307/2001060},
review={\MR{920166}},
}

\bib{DiestelUhl1977}{book}{
author={Diestel, J.},
author={Uhl, J.~J., Jr.},
title={Vector measures},
series={Mathematical Surveys},
publisher={American Mathematical Society, Providence, RI},
date={1977},
volume={No. 15},
note={With a foreword by B. J. Pettis},
review={\MR{453964}},
}

\bib{Emmanuele1996}{article}{
author={Emmanuele, G.},
title={Remarks on the complementability of spaces of {B}ochner integrable functions in spaces of vector measures},
date={1996},
ISSN={0010-2628,1213-7243},
journal={Comment. Math. Univ. Carolin.},
volume={37},
number={2},
pages={217\ndash 228},
review={\MR{1398997}},
}

\bib{GowersMaurey1997}{article}{
author={Gowers, William~T.},
author={Maurey, Bernard},
title={{B}anach spaces with small spaces of operators},
date={1997},
ISSN={0025-5831},
journal={Math. Ann.},
volume={307},
number={4},
pages={543\ndash 568},
url={https://doi-org/10.1007/s002080050050},
review={\MR{1464131}},
}

\bib{JRZ71}{article}{
author={Johnson, W.~B.},
author={Rosenthal, H.~P.},
author={Zippin, M.},
title={On bases, finite dimensional decompositions and weaker structures in {B}anach spaces},
date={1971},
ISSN={0021-2172},
journal={Israel J. Math.},
volume={9},
pages={488\ndash 506},
url={https://doi.org/10.1007/BF02771464},
review={\MR{280983}},
}

\bib{Kadec1958}{article}{
author={Kadets, Mikhail~I.},
title={Linear dimension of the spaces {$L_{p}$} and {$l_{q}$}},
date={1958},
ISSN={0042-1316},
journal={Uspehi Mat. Nauk},
volume={13},
number={6(84)},
pages={95\ndash 98},
review={\MR{101486}},
}

\bib{KadPel1962}{article}{
author={Kadets, Mikhail~I.},
author={Pe{\l}czy{\'n}ski, Aleksander},
title={Bases, lacunary sequences and complemented subspaces in the spaces {$L_{p}$}},
date={1961/1962},
ISSN={0039-3223},
journal={Studia Math.},
volume={21},
pages={161\ndash 176},
review={\MR{0152879}},
}

\bib{Kalton1984b}{article}{
author={Kalton, Nigel~J.},
title={Convexity conditions for nonlocally convex lattices},
date={1984},
ISSN={0017-0895},
journal={Glasgow Math. J.},
volume={25},
number={2},
pages={141\ndash 152},
url={https://doi-org/10.1017/S0017089500005553},
review={\MR{752808}},
}

\bib{KMP2003}{article}{
author={Kami\'nska, A.},
author={Maligranda, L.},
author={Persson, L.~E.},
title={Indices, convexity and concavity of {C}alder\'on-{L}ozanovskii spaces},
date={2003},
ISSN={0025-5521,1903-1807},
journal={Math. Scand.},
volume={92},
number={1},
pages={141\ndash 160},
url={https://doi.org/10.7146/math.scand.a-14398},
review={\MR{1951450}},
}

\bib{LemarieMeyer1986}{article}{
author={Lemari\'{e}, Pierre~G.},
author={Meyer, Yves},
title={Ondelettes et bases hilbertiennes},
date={1986},
ISSN={0213-2230},
journal={Rev. Mat. Iberoamericana},
volume={2},
number={1-2},
pages={1\ndash 18},
url={https://doi-org/10.4171/RMI/22},
review={\MR{864650}},
}

\bib{LevyRaynaud1984}{article}{
author={Levy, M.},
author={Raynaud, Y.},
title={Ultrapuissances de {$L^p(L^q)$}},
date={1984},
journal={Seminar on functional analysis, {P}ubl. {M}ath. {U}niv. {P}aris {VII}},
volume={20},
pages={69\ndash 79},
review={\MR{825306}},
}

\bib{Lin1967}{article}{
author={Lindenstrauss, Joram},
title={On complemented subspaces of {$m$}},
date={1967},
ISSN={0021-2172},
journal={Israel J. Math.},
volume={5},
pages={153\ndash 156},
url={https://doi.org/10.1007/BF02771101},
review={\MR{222616}},
}

\bib{LinPel1968}{article}{
author={Lindenstrauss, Joram},
author={Pe{\l}czy\'{n}ski, Aleksander},
title={Absolutely summing operators in {$L_{p}$}-spaces and their applications},
date={1968},
ISSN={0039-3223},
journal={Studia Math.},
volume={29},
pages={275\ndash 326},
url={https://doi-org/10.4064/sm-29-3-275-326},
review={\MR{0231188}},
}

\bib{LinTza1979}{book}{
author={Lindenstrauss, Joram},
author={Tzafriri, Lior},
title={Classical {B}anach spaces. {II} -- function spaces},
series={Ergebnisse der Mathematik und ihrer Grenzgebiete [Results in Mathematics and Related Areas]},
publisher={Springer-Verlag, Berlin-New York},
date={1979},
volume={97},
ISBN={3-540-08888-1},
review={\MR{540367}},
}

\bib{Paley1936}{article}{
author={Paley, R. E. A.~C.},
title={Some theorems on abstract spaces},
date={1936},
ISSN={0002-9904},
journal={Bull. Amer. Math. Soc.},
volume={42},
number={4},
pages={235\ndash 240},
url={https://doi.org/10.1090/S0002-9904-1936-06277-4},
review={\MR{1563277}},
}

\bib{Peetre1976}{book}{
author={Peetre, Jaak},
title={New thoughts on {B}esov spaces},
series={Duke University Mathematics Series, No. 1},
publisher={Mathematics Department, Duke University, Durham, N.C.},
date={1976},
review={\MR{0461123}},
}

\bib{Pel1960}{article}{
author={Pe{\l}czy\'{n}ski, Aleksander},
title={Projections in certain {B}anach spaces},
date={1960},
ISSN={0039-3223},
journal={Studia Math.},
volume={19},
pages={209\ndash 228},
url={https://doi-org/10.4064/sm-19-2-209-228},
review={\MR{126145}},
}

\bib{Raynaud1985}{article}{
author={Raynaud, Yves},
title={Sous-espaces {$l^r$} et g\'{e}om\'{e}trie des espaces {$L^p(L^q)$} et {$L^\phi$}},
date={1985},
ISSN={0249-6291},
journal={C. R. Acad. Sci. Paris S\'{e}r. I Math.},
volume={301},
number={6},
pages={299\ndash 302},
review={\MR{803223}},
}

\bib{Triebel1978}{book}{
author={Triebel, H.},
title={Interpolation theory, function spaces, differential operators},
publisher={VEB Deutscher Verlag der Wissenschaften, Berlin},
date={1978},
review={\MR{500580}},
}

\bib{Triebel1973}{article}{
author={Triebel, Hans},
title={\"{U}ber die {E}xistenz von {S}chauderbasen in
{S}obolev-{B}esov-{R}\"{a}umen. {I}somorphiebeziehungen},
date={1973},
ISSN={0039-3223},
journal={Studia Math.},
volume={46},
pages={83\ndash 100},
url={https://doi.org/10.4064/sm-46-1-83-100},
review={\MR{338771}},
}

\bib{Wojtowicz1988}{article}{
author={W\'{o}jtowicz, Marek},
title={On the permutative equivalence of unconditional bases in {$F$}-spaces},
date={1988},
ISSN={0208-6573},
journal={Funct. Approx. Comment. Math.},
volume={16},
pages={51\ndash 54},
review={\MR{965366}},
}

\end{biblist}
\end{bibdiv}
%---------------------------------------
\end{document}